\begin{document}
\newtheorem{conjecture}{Conjecture}
\newtheorem{theorem}{Theorem}
\newtheorem{lemma}{Lemma}
\newtheorem{proposition}{Proposition}
\newtheorem{corollary}{Corollary}
\newtheorem{definition}{Definition}
\newtheorem{remark}{Remark}
\newtheorem{example}{Example}

\def\ve{\varepsilon}
\def\vp{\varphi}
\def\tk{\tilde{\kern 1 pt\topsmash k}}


\def\be{\begin{equation}}
\def\ee{\end{equation}}

\def\bfx{{x}}
\def\ve{\varepsilon}
\def\iint{\not\kern -4pt\int}
\def\iiint{{\small{_\not}}\kern-3.5pt\int}
\def\mod{\text{\rm mod }}


\newcommand{\length}{\operatorname{length}}
\newcommand{\hd}{\length} 
\newcommand{\width}{\operatorname{width}}

\newcommand{\dist}{\operatorname{dist}}
\newcommand{\supp}{\operatorname{supp}}
\newcommand{\spec}{\operatorname{spec}}
\newcommand{\diam}{\operatorname{diam}}
\newcommand{\vol}{\operatorname{vol}}
\newcommand{\area}{\operatorname{area}}
\newcommand{\R}{\mathbb R}
\newcommand{\C}{\mathbb C}
\newcommand{\TT}{\mathbb T}
\newcommand{\Z}{\mathbb Z}
\newcommand{\HH}{\mathbb H}
\renewcommand{\^}{\widehat}
\newcommand{\vE}{\mathcal E} 
\newcommand{\nodal}{\mathcal N} 

\numberwithin{equation}{section}

\title[On the Geometry of the Nodal Lines]
{On the Geometry of the Nodal Lines of Eigenfunctions of the
  Two-Dimensional Torus}

\author{Jean Bourgain and Ze\'ev Rudnick}
\address{School of Mathematics, Institute for Advanced Study,
Princeton, NJ 08540 }
\email{bourgain@ias.edu}

\address{Raymond and Beverly Sackler School of Mathematical Sciences,
Tel Aviv University, Tel Aviv 69978, Israel  }
\email{rudnick@post.tau.ac.il}

\date{\today}

\begin{abstract}
  The width of a convex curve in the plane is the minimal distance
  between a pair   of parallel supporting lines of the curve. In this
  paper we study   the width of   nodal lines of eigenfunctions of the
  Laplacian on the   standard flat   torus. We prove a variety of
  results on the width, some having stronger versions assuming a
  conjecture of Cilleruelo and Granville asserting a uniform bound for
  the number of lattice points on the circle lying in short arcs.


\end{abstract}
\maketitle

\section{Introduction}

In this paper, we study the geometry of nodal lines of eigenfunctions
of the Laplacian on the standard flat torus $\TT^2=\R^2/\Z^2$.
The eigenvalues of the Laplacian on $\TT^2$ are of the form $4\pi^2 E$, where
$E=n_1^2+n_2^2$ is an integer which is a sum of two squares (in the
sequel we will abuse notation and refer to $E$ as the eigenvalue),
the corresponding eigenspace being trigonometric polynomials of the
form
\be\label{1.1}
\vp(x) =\sum_{\xi\in\mathbb Z^2, |\xi|^2 =E} a_\xi e(x \cdot \xi)
\ee
where we abbreviate $e(t):=\exp(2\pi i t)$. In order for $\varphi$
to be real-valued, the Fourier coefficients must satisfy
$\overline{a_\xi} = a_{-\xi}$.

Given the eigenfunction $\vp$, we may consider its nodal set
\be\label{1.3}
\nodal_\vp =\{\vp=0\} \;.
\ee
According to Courant's theorem, the complement of $\nodal_\vp$ has at most
$O(E)$ connected components, the ``nodal domains''.
Their boundary are the ``nodal lines''.

For any two-dimensional surface, it is known \cite{Cheng} that the
nodal lines are a union of $C^2$-immersed circles, with at most
finitely many singular points and the nodal lines through a singular
point form an equiangular system, see Figure~\ref{plotrow}.
Thus, with the exception of the singular set, the nodal set
of an eigenfunction is rectifiable and we
can speak about its length.
In the real-analytic case, such as in our case of the flat torus,
Donnelly-Fefferman \cite{D-F} showed that the length of the nodal set
of an eigenfunction with eigenvalue $E=\lambda^2$ is commensurable to
$\lambda$:
\be\label{1.4}
\hd(\nodal_\vp)\approx \sqrt E=\lambda\;.
\ee

\begin{figure}[h]
\begin{center}
  \includegraphics[width=120mm]
{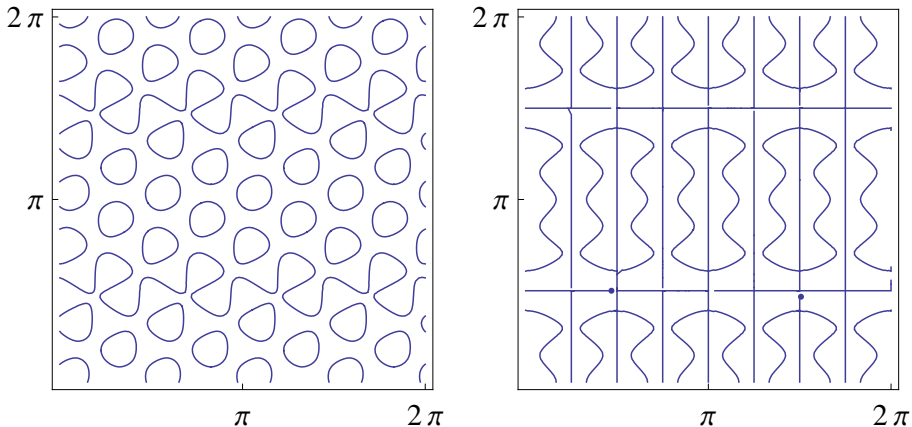}
 \caption{Nodal lines  for the eigenfunction $\cos(4 x - 7 y) + \sin(8
   x - y) + \sin(4 x + 7 y)$ (left) and
   $\sin(4x+7y)+\sin(4x-7y)+\sin(8x+y)+\sin(8x-y) = 2\sin 4x \cos y(-1
   + 2 \cos 4 x + 2 \cos 2 y - 2 \cos 4 y + 2 \cos 6 y)$
 (right).} \label{plotrow}
\end{center}
\end{figure}

Our goal in this paper is to better understand the local geometry
of nodal lines.
In this respect, M.~Berry argued \cite{Be} that for random plane
waves, the  nodal lines typically have curvature of order $E$.
If one tries to make a statement for nodal lines of individual
eigenfunctions, say in the case of $\mathbb T^2$, it is clear
that `pointwise curvature' is not the appropriate concept. 
Indeed, nodal lines (for arbitrary large $E$) may have zero curvature
or, as is also easily seen, develop arbitrary large pointwise
curvature (for a fixed $E$).

\subsection{The width of nodal lines} 
In order to formulate an alternative to curvature, we first introduce some
terminology.

\begin{definition}\label{Definition1}
An arc $C\subset\mathbb T^2$ is called `regular' if $C$ admits an
arc-length parametrization $\gamma:[0, \ell]\to C$, $0<\ell<1$,
which is $C^2$ and such that for some $\kappa>0$, the curvature
$|\ddot \gamma|$ satisfies  a pointwise pinching condition
\be\label{1.9}
\kappa <|\overset{..}\gamma|<2\kappa
\ee
and the total curvature is bounded:
\be\label{1.10}
2\kappa\ell<1 \;.
\ee
\end{definition}


For a  convex curve $C$ the {\em width} $w(C)$ is defined as the
minimal distance between a pair of parallel supporting lines of the
curve (Figure~\ref{regulararc}).
\begin{figure}[h]
\begin{center}
  \includegraphics[width=70mm]
{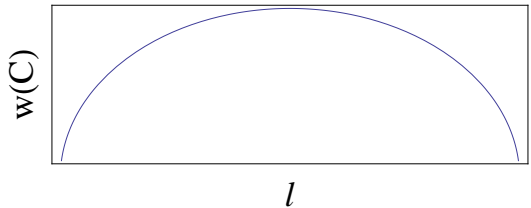}
 \caption{The width of a regular arc} \label{regulararc}
\end{center}
\end{figure}
In the case of regular arcs,  we shall see that
\be
\label{1.11}
w(C) \approx \ell^2\kappa
\ee

An examination of numerical plots of some nodal sets (see
Figure~\ref{plotrow}) leads one to realize that $\nodal_\vp$ does
not contain ``large'' curved arcs, specifically that as $E\to
\infty$, for regular arcs $C\subset \nodal_\vp$, either their length
$\ell\to 0$ shrinks or the curvature  pinching $\kappa\to 0$.

We make the following
conjecture:


\begin{conjecture}\label{Conjecture1}
For all $\ve>0$, there is $c_\ve>0$ such that if $\vp$ is an
eigenfunction of $\mathbb T^2$ of eigenvalue $E=\lambda^2$ and $C$ any
regular arc contained in $\nodal_\vp$, then
\be\label{1.12}
w(C)<c_\ve \lambda^{-1+\ve}.
\ee
\end{conjecture}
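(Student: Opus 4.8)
The strategy is to convert \eqref{1.12} into a bound on how many lattice points of $\Lambda:=\{\xi\in\Z^2:|\xi|^2=E\}$ can lie in a short arc of the circle of radius $\lambda$, so that the conjecture becomes a consequence of the Cilleruelo--Granville hypothesis on such counts, with a weaker unconditional statement following from the presently-known estimates. Two easy reductions come first: granting the equivalence $w(C)\approx\ell^2\kappa$ of \eqref{1.11} and that a regular arc has $\ell<1$ and $\kappa\ell<1/2$, the bound \eqref{1.12} is automatic when $\kappa\le\lambda^{-1+\ve}$ (then $w\approx\ell^2\kappa<\kappa$) or when $\ell\le\lambda^{-1+\ve}$ (then $w\approx(\kappa\ell)\,\ell<\ell$); so one may assume $\lambda^{-1+\ve}<\kappa,\ell<\lambda^{1-\ve}$ and must rule out $\lambda w\ge\lambda^{\ve}$.

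For the core step, parametrize $C$ by arclength $\gamma:[0,\ell]\to C$ with $\vp\circ\gamma\equiv0$, and write $\xi=\lambda\omega_\xi$ with $\omega_\xi\in S^1$. Along $C$ the mode $e(\xi\cdot x)$ has phase $\lambda\,\gamma(s)\cdot\omega_\xi$, whose second $s$-derivative is at most $2\lambda\kappa$ in modulus; hence on a disk of radius $\rho\le\ell$ meeting $C$ the frequencies organise into angular clusters of width $\approx1/(\lambda\rho)$, within which $\vp$ is to leading order a superposition of a few modulated plane waves. The tangent direction of the nodal line is $\nabla\vp^{\perp}$ (well defined along a regular arc, where $\nabla\vp\neq0$), so it is confined to the angular hull of the effective clusters, while over $[0,\ell]$ it rotates by the total curvature $\approx\kappa\ell$. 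I would make this quantitative by restricting $\vp$ to the pencil of lines transverse to $C$ and invoking a Remez/Tur\'an-type inequality bounding how fast the sign-change set of a trigonometric polynomial of degree $\lesssim\lambda$ can migrate as one crosses the pencil; the expected conclusion is that a nodal arc turning by $\kappa\rho$ inside a disk of radius $\rho$ (so with sagitta $\approx\kappa\rho^{2}$) forces $\gtrsim\lambda\kappa\rho^{2}$ \emph{distinct} points of $\Lambda$ into an arc of the circle of length $\lesssim\lambda\kappa\rho$ --- equivalently, the band-limited function $\widehat\mu$, $\mu=\sum_\xi a_\xi\delta_{\omega_\xi}$, vanishes on the dilated curve $\lambda\cdot C$, whose width $\lambda w$ and bounded total turning force $\mu$ to have many atoms clustered in a short arc. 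The ``chirp'' $\sum_j e(2\pi\xi_j\cdot x)$ with $\sim\lambda w$ roughly equally-spaced frequencies in an arc of length $\sim\lambda\kappa\ell$ is the model that realises this. Optimising over $\rho\in[\lambda^{-1},\ell]$ subject to the resulting arc being short enough for the Cilleruelo--Granville bound to apply then exhibits a short arc containing an unbounded (as $\lambda w\to\infty$) number of lattice points, contradicting that hypothesis and giving \eqref{1.12}; using instead the unconditional short-arc estimate yields only $w\lesssim\lambda^{-\theta+\ve}$ for some $\theta<1$.

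The main obstacle is precisely this quantitative claim: one must control the \emph{length} and the \emph{curvature} of the nodal arc at the same time in terms of the number and angular spread of the participating frequencies, with error terms that survive the crude ``modulated plane wave'' approximation --- whose cluster width $1/(\lambda\rho)$ need not be small --- and one needs this uniformly, including for very tightly curved arcs, where the localisation radius $\rho$ is pushed down towards a wavelength and even the unconditional lattice-point input is too weak; this is why the clean bound \eqref{1.12} is only conjectural and is tied to Cilleruelo--Granville. A softer route to partial results: a curved nodal arc of width $w$ and length $\ell$ should force $\gtrsim\lambda w$ nearly-parallel nodal arcs of length $\gtrsim\ell$ in a tube of radius $\approx w$ about $C$, whence the Donnelly--Fefferman bound $\hd(\nodal_\vp)\approx\lambda$ gives $\lambda w\,\ell\lesssim\lambda$, i.e.\ $w\lesssim1/\ell$.
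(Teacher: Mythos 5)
You are attempting to prove Conjecture~\ref{Conjecture1}, which the paper itself leaves open. What the paper does establish is: (a) that the conjecture holds for all but $O(N^{1-\epsilon/3})$ eigenvalues $E\leq N$ (\S 3.2 combined with Lemma~\ref{Lemma2.9}); (b) the unconditional Theorem~\ref{Theorem1} with exponent $1/3$ in place of $1$; and (c) Theorems~\ref{Theorem2}--\ref{Theorem3} for ``almost all'' and a ``positive proportion'' of the nodal set, with the Cilleruelo--Granville hypothesis only invoked to upgrade the exponent $1/2$ in \eqref{1.14} to $1$, not to yield Conjecture~\ref{Conjecture1} itself. Your opening reduction via $w\approx\ell^2\kappa$ is correct, and tying the problem to lattice points in short arcs is the right instinct, but the argument contains a gap that you flag yourself and a step that is in fact false.

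The false step is the claim that the nodal tangent $\nabla\vp^\perp$ is ``confined to the angular hull of the effective clusters.'' Writing $\nabla\vp(x)=-4\pi\sum_{\xi}\xi\,\operatorname{Im}\bigl(\^\vp(\xi)e(\xi\cdot x)\bigr)$ (one representative of each $\pm\xi$ pair), the real coefficients multiplying the $\xi$'s have arbitrary sign, so already with two nearby frequencies $\xi_1,\xi_2$ the gradient sweeps out the entire plane they span; the nodal direction does not record the angular spread of the active frequencies in the way your picture requires. Consequently the quantitative core --- that a sagitta $\approx\kappa\rho^2$ forces $\gtrsim\lambda\kappa\rho^2$ lattice points into a short arc --- is unsupported, and is also stronger than anything the paper proves or needs: the paper's mechanism is to test $\vp\circ\gamma\equiv 0$ against a single modulated bump $e(-\xi_0\cdot\gamma(t))\omega(t)$ (Lemma~\ref{Lemma3.1} and \S 3.2), producing the exact identity $0=c_0+\sum_{j\geq 1}c_j\int e\bigl((\xi_j-\xi_0)\cdot\gamma\bigr)\omega$ with each integral $\lesssim\lambda^\ve/(w|\xi_j-\xi_0|)$; large width forces only a \emph{bounded} number of lattice points near $\xi_0$, and the contradiction comes from Jarnik-type triple/quadruple product bounds (Lemmas~\ref{Lemma2.1}, \ref{Lemma2.3}), not from a cluster of $\gtrsim\lambda w$ points. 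Your ``softer route'' is also inconclusive: the existence of $\gtrsim\lambda w$ nearly-parallel arcs of length $\gtrsim\ell$ in a $w$-tube is asserted, not proved (the $1/\lambda$-density of the nodal set produces zeros, not long parallel arcs), and even granting $w\lesssim 1/\ell$ one cannot deduce \eqref{1.12} without $\ell\gtrsim\lambda^{1-\ve}$, which fails since $\ell<1$.
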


\noindent
This is our substitute for the phenomenon M.~Berry pointed out for
random plane waves.
The above conjecture seems to be consistent with numerics
and we will moreover prove its validity for `most' eigenvalues $E$.

In generality, we prove

\begin{theorem}\label{Theorem1}
If $C\subset \nodal_\vp$ is a regular arc, then
\be
\label{1.13}
w(C)< C_\ve \lambda^{-\frac 13+\ve}.
\ee
\end{theorem}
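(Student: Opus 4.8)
The plan is to convert the geometric hypothesis into an arithmetic statement about lattice points on circles and to invoke Jarn\'\i k's bound --- an arc of Euclidean length $\ll\lambda^{1/3}$ on the circle $|\xi|=\lambda$ carries at most two lattice points --- which is exactly the source of the exponent $\tfrac13$. Normalize $\|\varphi\|_2=1$; then $\|\varphi\|_\infty\ll\lambda^{o(1)}$ since the number of lattice points on $|\xi|^2=E$ is $E^{o(1)}$. By \eqref{1.11} set $w:=w(C)\approx\ell^2\kappa$, so \eqref{1.10} gives $w\ll\ell$. We may assume $w>\lambda^{-\frac13+\ve}$ (otherwise \eqref{1.13} is immediate), in particular $w>1/\lambda$.

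First I would reduce to an exponential--sum inequality along the arc. Parametrize $C$ by arc length $\gamma$, fix a base point $\gamma(0)$ with unit tangent $T$, unit normal $N$ and signed curvature $\kappa_0$ (so $|\kappa_0|\approx\kappa$), and Taylor-expand each linear phase: $\xi\cdot\gamma(s)=\xi\cdot\gamma(0)+s(\xi\cdot T)+\tfrac12 s^2\kappa_0(\xi\cdot N)+(\text{cubic remainder})$. Since $\varphi(\gamma(s))\equiv0$, summing \eqref{1.1} yields, for $0\le s\le\ell'$ (a subarc to be chosen),
\be
\Bigl|\sum_{|\xi|^2=E}\tilde a_\xi\, e\bigl(s(\xi\cdot T)+\tfrac12 s^2\kappa_0(\xi\cdot N)\bigr)\Bigr|\ \ll\ \operatorname{Err}(s),\qquad \tilde a_\xi=a_\xi\, e(\xi\cdot\gamma(0)),
\ee
where $\operatorname{Err}$ collects the cubic and higher remainders. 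Thus this quadratic exponential sum is forced to be small on $[0,\ell']$.

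Next I would decompose the frequencies. Call $\xi,\xi'$ equivalent if $|(\xi-\xi')\cdot T|\ll 1/\ell'$ and $|\kappa_0|\,|(\xi-\xi')\cdot N|\ll 1/\ell'^2$, i.e.\ their quadratic phases stay within $O(1)$ of one another on $[0,\ell']$. An equivalence class lies in the intersection of the circle $|\xi|=\lambda$ with a box of sides $\approx\tfrac1{\ell'}\times\tfrac1{\kappa\ell'^2}$ in the $(T,N)$-frame, hence in a circular arc of length $\ll\max\bigl(\tfrac1{\ell'},\tfrac1{\kappa\ell'^2}\bigr)$; choosing $\ell'$ as large as the error permits makes this $\ll w^{-1}\lambda^{o(1)}<\lambda^{\frac13-\ve}$, and Jarn\'\i k's theorem then forces each class to contain at most two lattice points. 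Finally, distinct classes contribute $L^2([0,\ell'])$-almost orthogonally --- the phase difference of elements of different classes is a quadratic in $s$ whose linear or quadratic coefficient is bounded below, so van der Corput gives a gain --- while each one- or two-element class contributes a strictly positive amount; expanding $0=\int_0^{\ell'}|\varphi(\gamma(s))|^2\,ds$ in these blocks gives $\ell'\|\varphi\|_2^2\ll(\text{errors})$, a contradiction for a proper choice of $\ell'$. Hence $w\ll\lambda^{-\frac13+\ve}$, and the conditional bound \eqref{1.12} follows by the same argument with Jarn\'\i k's $\lambda^{1/3}$ replaced by the arc length permitted by the Cilleruelo--Granville conjecture.

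I expect the main obstacle to be twofold. One must pick $\ell'$ so that the accumulated remainder $\operatorname{Err}$ is genuinely beaten by the main term $\ell'\|\varphi\|_2^2$; this requires controlling $\gamma^{(3)}$ (and higher derivatives) of the nodal arc in terms of $\lambda$, exploiting that $C$ lies inside a nodal line of a $\lambda^2$-eigenfunction, and it is the tension between this and the requirement $\tfrac1{\kappa\ell'^2}<\lambda^{\frac13-\ve}$ that produces the $\lambda^{\ve}$ losses. Secondly, one must rule out internal near-cancellation inside a two-element class, which is invisible to any single linear functional of the frequencies; I would handle this by averaging the construction over several base points $\gamma(s_0)$ along $C$ (a fixed pair is a ``class'' only over a short portion of $C$) together with the extra relation $\nabla\varphi\cdot\dot\gamma\equiv0$ on $C$.
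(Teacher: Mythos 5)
Your proposal takes a genuinely different route from the paper, and it has two gaps --- one of which the paper's own Appendix shows to be a hard obstruction, and one of which is the crux that the paper's actual argument is built precisely to circumvent.

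First, the Taylor expansion of the phase $\xi\cdot\gamma(s)$ to second order with a cubic remainder requires a pointwise bound on $\gamma^{(3)}$ (and you correctly anticipate this). But Appendix~\ref{appendix:example} constructs a regular arc in a nodal set with $\|\ddot\gamma\|\sim 1/k$ while $\|\gamma^{(4)}\|_\infty\sim k$; higher derivatives of $\gamma$ are \emph{not} controlled by the regularity hypothesis \eqref{1.9}--\eqref{1.10}, so the cubic remainder in your expansion cannot in general be beaten by the main term. The paper sidesteps this entirely: Lemma~\ref{Lemma3.1} is a non-stationary--phase estimate proved by a single integration by parts in the exact phase $\xi\cdot\gamma(t)$, using only $\dot\gamma$ and $\ddot\gamma$, so no Taylor remainder ever appears. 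Building the weight $\omega$ in the form \eqref{3.26} (a bump composed with $\gamma_1$) makes the residual oscillatory integral an exact Fourier transform of a fixed bump, again with no expansion.

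Second, the in-class cancellation problem that you flag at the end is the actual difficulty, and your fix (averaging over several base points) does not obviously work in an $L^2$ framework: $\int_0^{\ell}|\varphi\circ\gamma|^2\,ds=0$ is a fixed quantity, and for a two-element class $\{\xi,\xi'\}$ with $a_\xi\approx -a_{\xi'}$ the corresponding diagonal-plus-cross-term block can vanish identically, giving no lower bound at all. Moreover, the tangent direction turns by only $\kappa\ell<1$ along $C$, so the equivalence classes do not reshuffle enough under a change of base point to break the degeneracy. The paper's resolution is decisively to abandon the $L^2$ test in favor of a \emph{linear} functional $\int\vp(\gamma(t))\,e(-\xi_0\cdot\gamma(t))\,\omega(t)\,dt=0$ and then to vary $s_0$ (the center of the bump) over an interval $I_2$ of length $\sim\ell$. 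Because the weight in \eqref{3.26} localizes in the rotated variable $\gamma_1$, moving $s_0$ translates the phase of the single close neighbour $\xi_1$ through an interval $U$ of length $\sim w\,|\xi_1-\xi_0|$, yielding the lower bound $\min(1,w|\xi_1-\xi_0|)\,|c_0|$ in \eqref{3.35}. This is exactly the leverage that defeats the two-point cancellation, and it is intrinsically linear; I do not see how to reproduce it with the $L^2$ block decomposition. (It is no coincidence that the $L^2$ expansion of $\int_{C_{\alpha,\tau}}|\vp|^2$ \emph{is} the engine of Theorems~\ref{Theorem2} and~\ref{Theorem3}, where it gives an ``almost-all'' conclusion rather than a statement about every individual arc.)

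Finally, even if the blocks could be handled, your sketch gets the $1/3$ exponent by a single appeal to Jarn\'\i k's three-point lemma. The paper's proof actually iterates the main inequality \eqref{3.38} with $J=1,2$ and then re-applies it with the base frequency $\xi_0$ replaced by some $\xi_j$, and it needs the four-point variant Lemma~\ref{Lemma2.3} (in Case~1 of the endgame) in addition to Lemma~\ref{Lemma2.1}. So the arithmetic input is strictly larger than what your scheme budgets for.
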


The argument makes crucial use of the structure of lattice points
on the circle $\{|\xi|=\lambda\}$.
Relevant results will be presented in \S~\ref{sec:lattice}.

In \S~\ref{sec:5} we will show that the exponent $1/3$ of
Theorem~\ref{Theorem1} can be improved to $1/2$ for almost all of the
nodal line, in the following sense:
\begin{theorem}\label{Theorem2}
Given $\ve>0$, there is $\delta>0$ such that the following holds.
Let $\{C_\alpha\}$ be a collection of disjoint regular arcs of $\nodal_\vp$
satisfying
\be\label{1.14}
w(C_\alpha)> \lambda^{-\frac 12+\ve} \text { for each } \alpha.
\ee
Then for $\lambda>\lambda(\epsilon)$,
\be
\label{1.15}
\sum_\alpha \length( C_\alpha) <\lambda^{1-\delta}
\ee
\end{theorem}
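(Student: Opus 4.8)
The plan is to reduce to a single geometric scale, to show that membership of an arc of that scale in $\nodal_\vp$ forces a concentration of the Fourier data $(a_\xi)_{|\xi|^2=E}$, and then to sum these concentrations against the budget $\sum|a_\xi|^2=\|\vp\|_{L^2}^2$. First I would pigeonhole. Since each $C_\alpha$ is regular, \eqref{1.11} and \eqref{1.10} give $w(C_\alpha)\lesssim\length(C_\alpha)$, so by \eqref{1.14} every arc satisfies $\lambda^{-1/2+\ve}<w(C_\alpha)\lesssim\length(C_\alpha)<1$. Grouping into $O((\log\lambda)^2)$ dyadic ranges of $(\length,w)$, it suffices to fix $L,W$ with $\lambda^{-1/2+\ve}\lesssim W\lesssim L<1$, to consider the subfamily $\mathcal A$ of arcs with $\length(C_\alpha)\approx L$ and $w(C_\alpha)\approx W$ — so that the curvature is $\kappa_\alpha\approx W/L^2$ — and to prove $\#\mathcal A\cdot L\le\lambda^{1-\delta'}$ for some $\delta'=\delta'(\ve)>0$; summing the $O((\log\lambda)^2)$ ranges then yields \eqref{1.15} with a slightly smaller $\delta$.

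The heart of the matter is the local analysis near a single arc $C_\alpha\in\mathcal A$. I would subdivide $C_\alpha$ into $\approx(W\lambda)^{1/2}$ consecutive sub-arcs of length $h\approx L(W\lambda)^{-1/2}$, chosen (up to a harmless $\lambda^{o(1)}$ factor, $N:=\#\{\xi:|\xi|^2=E\}=\lambda^{o(1)}$) so that the sagitta $\approx h^2\kappa_\alpha$ of each sub-arc is $\ll\lambda^{-1}N^{-1}$; note that, since the total turning of $C_\alpha$ is $<1$ and $\|\nabla\vp\|_\infty\le\lambda^{1+o(1)}\|\vp\|_{L^2}$, replacing a sub-arc by its chord turns the identity $\vp|_{C_\alpha}=0$ into the statement that the trigonometric polynomial $t\mapsto\sum_{|\xi|^2=E}a_\xi e(\xi\cdot x_0)\,e(t\,\xi\cdot v)$, with $v$ the chord direction, is $\ll\|\vp\|_{L^2}$ on an interval of length $h$. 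Isolating the part of this sum that is slowly varying on scale $h$ — the contribution of the lattice points $\xi$ lying in the two caps of $\{|\xi|=\lambda\}$ about $\pm v^\perp$ of arclength $\lesssim 1/h=(W\lambda)^{1/2}/L$ — and then translating the interval along $C_\alpha$ to produce a Vandermonde-type system in the phases $e(\xi\cdot(\text{shift}))$, one concludes that, off an exceptional set, $\vp$ restricted to a neighborhood of $C_\alpha$ is essentially governed by its projection $\vp_{\mathcal C_\alpha}$ onto the frequencies of those caps: a ``beam'' built from $\le\#(\text{lattice points in the caps})$ nearly parallel plane waves. Here the hypothesis $W>\lambda^{-1/2+\ve}$ is exactly what is needed, for it forces the caps to be short enough — for the principal range of $L$, below the threshold past which the counting results of \S\ref{sec:lattice} degrade — so that the caps contain only $\lambda^{o(1)}$, and in any case $\ll(W\lambda)^{1/2}$, lattice points, which is what makes the Vandermonde system invertible; this is the step where one genuinely needs the improvement over the unconditional exponent $\tfrac13$ of Theorem~\ref{Theorem1}.

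Finally I would run an orthogonality/Crofton budget over $\mathcal A$. The $\approx(W\lambda)^{1/2}$ sub-arcs of a fixed $C_\alpha$ have tangent directions within an angle $<1$, so they all see essentially the same pair of caps $\mathcal C_\alpha^{\pm}$; if two arcs have chord directions differing by more than the angular width of a cap then $\mathcal C_\alpha^{\pm}$ and $\mathcal C_\beta^{\pm}$ are disjoint, so the arcs of $\mathcal A$ fall into $O(1)$ angular clusters (otherwise the caps would exhaust the $\lambda^{o(1)}$ lattice points on $\{|\xi|=\lambda\}$). Within a cluster the corresponding beam $\vp_{\mathcal C}$, after a rotation, is a sum of $\lambda^{o(1)}$ plane waves whose frequencies all have $|\xi\cdot v|\lesssim(W\lambda)^{1/2}/L\le\lambda^{1-\rho}$ for a suitable $\rho=\rho(\ve)>0$ (here one uses $L\gtrsim\lambda^{-1/2+\ve}$); hence $\vp_{\mathcal C}$ restricted to any line in the common direction $v$ has at most $\lambda^{1-\rho}$ zeros per period, and since near the arcs of the cluster the nodal set of $\vp$ tracks that of $\vp_{\mathcal C}$, the family of lines of direction $v$ meets $\bigcup_{\alpha\in\text{cluster}}C_\alpha$ in $\lesssim\lambda^{1-\rho}$ points each. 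Cauchy–Crofton applied to this transversal family then bounds the total length of the cluster by $\lambda^{1-\rho}$, and summing over the $O(1)$ clusters gives $\#\mathcal A\cdot L=\sum_{\alpha\in\mathcal A}\length(C_\alpha)\lesssim\lambda^{1-\rho}$, as required.

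The main obstacle is the second paragraph: the exact vanishing of $\vp$ along the curved arc only controls $\vp$ on the nearby chord up to the unavoidable $O(\|\vp\|_{L^2})$-scale loss coming from $\|\nabla\vp\|_\infty\approx\lambda\|\vp\|_{L^2}$, so one must track the cap sizes, the number of translates used, and the resulting mass proportions precisely enough to extract a genuine dichotomy — ``the beam description holds'' versus ``too many lattice points in a short arc,'' the latter excluded by \S\ref{sec:lattice} — rather than a merely soft statement, and precisely enough that the conclusion bounds $\sum_\alpha\length(C_\alpha)$ and not only $\#\mathcal A$. It is here that the quantitative form of the short-arc lattice-point estimates enters, and it is the only place where the passage from the exponent $\tfrac13$ to $\tfrac12$ is felt.
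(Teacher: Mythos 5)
Your proposal follows a genuinely different route from the paper's, but several of its central steps are left as assertions that, as far as I can see, do not close, and one of them is exactly the crux of the theorem.

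The paper argues as follows. It first applies an $L^2$-orthogonality device along each arc: Lemma~\ref{Lemma3.5} shows that after deleting a $2c_0$-fraction of each $C_\alpha$, the Fourier transform of the normalized arclength measure has the oscillatory decay $\bigl|\widehat{ds}(\xi_1-\xi_2)\bigr|\lesssim \lambda^{\rho+\epsilon}|\xi_1-\xi_2|^{-1}$, which crucially depends only on the width $w(C_\alpha)$, not on its length. It then partitions $\mathcal E$ into angular clusters $\mathcal E_\beta$ of diameter $\ll\lambda^{\rho_1+\epsilon}$ and pairwise separation $\geq\lambda^{\rho_1}$ (with $\rho_1=\rho+3\delta$), writes $\vp=\sum_\beta\vp_\beta$, and uses the decay to show the clusters are nearly orthogonal in $L^2(\mathcal N_1)$. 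Since $\vp$ vanishes there, every cluster has small $L^2$-mass on $\mathcal N_1$; picking a $\beta$ with $\|\vp_\beta\|_2\geq|\mathcal E|^{-1/2}$ and invoking the multivariate Tur\'an lemma (Lemma~\ref{Lemma4.12}, built on Nazarov and on the lattice-point bound Lemma~\ref{Lemma2.8}, which needs $\rho_1<\tfrac12$, hence the threshold $w>\lambda^{-1/2+\epsilon}$), it concludes that the set $\Omega$ where $\vp_\beta$ is small has small \emph{area}. Finally the Donnelly--Fefferman \emph{local upper bound} on nodal length (Lemma~\ref{Lemma4.8}, together with the global bound \eqref{1.4}) converts the smallness of $|\Omega|$ into smallness of $\hd(\mathcal N_0)$. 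So the width hypothesis is used twice (in the oscillatory decay and, via the separation scale, in the Tur\'an/lattice-count step), and Donnelly--Fefferman is needed at the very end to pass from area to length — the paper even emphasizes this dependence after Theorem~\ref{Theorem3}.

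Your argument replaces the oscillatory-integral-on-the-curve step by a chord approximation and a Vandermonde inversion, and replaces the area-to-length conversion by a Crofton count. There are two places where I believe it does not go through. First, the ``chord replacement'' step does not by itself control anything nontrivial: replacing a sub-arc by its chord only shows $\vp$ is $O(\|\nabla\vp\|_\infty\cdot\text{sagitta})$ on the chord, and with the sagitta tuned to $\lambda^{-1-o(1)}$ this is $O(\lambda^{-o(1)}\|\vp\|_2)$, which any eigenfunction satisfies pointwise on any segment. You flag this as ``the main obstacle'' and indicate a Vandermonde system in translates would upgrade it, but that upgrade is exactly where the content would lie — and, more to the point, it discards the curvature that the width hypothesis supplies. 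The paper's Lemma~\ref{Lemma3.5} instead \emph{uses} the curvature (via non-stationary phase along the arc) to gain the factor $1/w(C_\alpha)$; a straight chord has no such gain, since any plane wave restricted to a line is again a pure exponential. So the $w>\lambda^{-1/2+\epsilon}$ hypothesis never really enters your local analysis, and indeed your cap-size formula $1/h\approx(W\lambda)^{1/2}/L$ is not $\leq\lambda^{1/2-\sigma}$ for all pairs $(L,W)$ satisfying $\lambda^{-1/2+\epsilon}<W\lesssim L<1$ (take $L$ small, say $L\approx\lambda^{-1/4}$ and $W\approx\lambda^{-1/2+\epsilon}$: then $1/h\approx\lambda^{1/2+\epsilon/2}$), so the entry point for the lattice-count lemma is not in place. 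Second, the Crofton step hinges on the assertion that ``near the arcs of the cluster the nodal set of $\vp$ tracks that of $\vp_{\mathcal C}$.'' That is precisely what needs to be proved: nothing in the argument shows that the cap frequencies dominate the off-cap frequencies of $\vp$ near $C_\alpha$, and the nodal set of $\vp$ can of course be very different from that of any subband projection. The paper's orthogonality + Tur\'an step is what substitutes for this missing implication, and it yields only an $L^2$/area statement, which is why Donnelly--Fefferman is then required to recover a length bound; your plan omits that last conversion entirely. In short, the proposed route would need (i) a quantitative replacement for the curvature-driven oscillatory decay that actually exploits the width, (ii) a proof, not an assertion, that the nodal set near each $C_\alpha$ is close to the zero set of a cap projection, and (iii) either a Crofton argument that is justified after (ii), or the Donnelly--Fefferman machinery the paper uses.
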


Recall that $\hd(\nodal_\vp)\approx\lambda$, by \eqref{1.4}, so that
Theorem~\ref{Theorem2} asserts that arcs of large width form a
negligible part of the nodal set.
\medskip

As we will see, the exponent $\frac 12$ in \eqref{1.14} could be
replaced by $1$, assuming the validity of the Cilleruelo-Granville
conjecture \cite{C-G}, stating that for all $\ve>0$, there is a
constant $B_\ve$ such that any arc on a circle $\{|\xi|=\lambda\}$
of size at most $\lambda^{1-\ve}$ contains at most $B_\ve$ lattice
points (uniformly in $\lambda$).
\medskip

Finally, we also show that Conjecture~\ref{Conjecture1} holds for at
least a positive proportion of the nodal set, in the following sense:

\begin{theorem}\label{Theorem3}
There is a constant $c_0>0$ such that the following holds.
Let $\ve>0$, $\lambda$ large enough and $\{C_\alpha\}$ a collection of
disjoint regular arcs of $\nodal_\vp$ satisfying
\be\label{1.16}
w(C_\alpha)> \lambda^{-1+\ve} \text { for each } \alpha.
\ee
Then
\be
\label{1.17}
\length(\nodal_\vp\backslash \bigcup_\alpha C_\alpha)>c_0\lambda.
\ee
\end{theorem}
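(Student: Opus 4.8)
Since $\length(\nodal_\vp)\approx\lambda$ by \eqref{1.4}, the assertion is equivalent to bounding $\sum_\alpha\length(C_\alpha)$ below $\length(\nodal_\vp)$ by an absolute multiple of $\lambda$. I would first discard the arcs with $w(C_\alpha)>\lambda^{-1/2+\ve}$: by Theorem~\ref{Theorem2} these contribute only $O(\lambda^{1-\delta})$ to the sum, so it suffices to treat arcs with $\lambda^{-1+\ve}<w(C_\alpha)\le\lambda^{-1/2+\ve}$. For such an arc, with $\ell_\alpha=\length(C_\alpha)$ and $\kappa_\alpha$ its curvature, \eqref{1.11} together with the constraint $2\kappa_\alpha\ell_\alpha<1$ gives (using $\ell_\alpha<1$) both $\kappa_\alpha\gtrsim w(C_\alpha)/\ell_\alpha^2>w(C_\alpha)>\lambda^{-1+\ve}$ and $\ell_\alpha\gtrsim w(C_\alpha)>\lambda^{-1+\ve}$; so every relevant arc is genuinely curved and not microscopically short.

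The key local input is that, by \eqref{1.1}, $\vp$ is a sum of only $E^{o(1)}=\lambda^{o(1)}$ exponentials $a_\xi e(x\cdot\xi)$ with $|\xi|=\lambda$. Restricting to an arc-length parametrization $\gamma$ of a wide arc $C_\alpha$, the relation $\vp\circ\gamma\equiv0$ becomes a cancellation among $\lambda^{o(1)}$ one-dimensional oscillations $a_\xi\,e(\xi\cdot\gamma(s))$, each a ``chirp'' whose instantaneous frequency $\xi\cdot\dot\gamma(s)$ has derivative $\xi\cdot\ddot\gamma(s)$ of size $\approx\kappa_\alpha\lambda$, so the frequency sweeps by $\approx\kappa_\alpha\lambda\ell_\alpha$ across $C_\alpha$. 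The ratio of this sweep to the Fourier resolution $1/\ell_\alpha$ available on an interval of length $\ell_\alpha$ equals $\kappa_\alpha\lambda\ell_\alpha^2\approx\lambda\,w(C_\alpha)>\lambda^{\ve}$, so on $C_\alpha$ each chirp has a frequency that genuinely moves at the ambient resolution, and the only way $\lambda^{o(1)}$ such chirps can cancel identically is through tight coincidences among the angles of the $\xi$'s to the tangent of $C_\alpha$ and the induced sweep rates. The plan is to show that a positive-density family of such arcs forces the lattice points $\{|\xi|^2=E\}$ to concentrate in a bounded number of short arcs of $\{|\xi|=\lambda\}$, in a configuration compatible with the tangent directions of the $C_\alpha$ --- the very constraint through which the Cilleruelo--Granville conjecture would imply Conjecture~\ref{Conjecture1}, and which unconditionally is to be quantified using the lattice-point estimates of \S~\ref{sec:lattice}.

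To finish, I would feed this clustering back in: when the lattice points lie in boundedly many short arcs, $\vp$ decomposes into boundedly many ``wave-packet'' pieces, and a stationary/slowly-varying-phase analysis shows that outside a set of controlled measure the nodal set of $\vp$ consists of regular arcs of width $O(\lambda^{-1})$. Hence arcs of width $>\lambda^{-1+\ve}$ can account for at most a $(1-c_0)$ fraction of $\length(\nodal_\vp)$, contradicting the assumption that they cover more; tracking the constants through the \S~\ref{sec:lattice} estimates yields an absolute $c_0>0$ with $\length(\nodal_\vp\setminus\bigcup_\alpha C_\alpha)>c_0\lambda$.

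The main obstacle is that $\vp$ is a global object, so neither step localizes for free: a priori every lattice point contributes near every arc, and the chirps in the second step genuinely \emph{can} cancel --- which is exactly why Conjecture~\ref{Conjecture1} is still open. The heart of the argument is the quantitative trade-off: cancellation sufficient to make some arcs wide can be tolerated, but not on a collection covering a positive proportion of $\nodal_\vp$. Making this effective amounts to running the Theorem~\ref{Theorem2} analysis at the endpoint exponent, pushing both the finiteness $E^{o(1)}$ of the plane-wave expansion and the short-arc spacing estimates of \S~\ref{sec:lattice} as far as they go, at the cost of replacing ``negligible'' by ``at most a $(1-c_0)$ fraction''.
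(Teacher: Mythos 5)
Your proposal is a heuristic plan rather than a proof, and it diverges from the paper's actual argument in ways that leave genuine gaps. The paper does \emph{not} reduce Theorem~\ref{Theorem3} to Theorem~\ref{Theorem2} plus a separate argument for narrower arcs; instead both theorems share one framework (decompose $\mathcal E$ into clusters $\mathcal E_\beta$ at scale $\lambda^{\rho_1}$, show via Lemma~\ref{Lemma3.5} that some normalized piece $\psi=\vp_\beta/\|\vp_\beta\|_2$ has small $L^2$ mass on $\mathcal N_1$, then show $\psi$ cannot be small on a large set), and Theorem~\ref{Theorem3} is obtained by running this at $\rho$ close to $1$, replacing the Tur\'an-type estimate (Lemma~\ref{Lemma4.12}) by the Jensen-type inequality (Lemma~\ref{Lemma4.15}, estimate \eqref{4.17}), which applies without any restriction on the length of the lattice-point arc supporting $\^\psi$ but returns only a $1/\log\log\lambda$-type bound on $|\Omega|$ — just enough for positive proportion.

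Two concrete gaps in your sketch. First, the step ``when the lattice points lie in boundedly many short arcs, a stationary/slowly-varying-phase analysis shows that outside a set of controlled measure the nodal set consists of regular arcs of width $O(\lambda^{-1})$'' is asserted, not carried out, and it is exactly the kind of claim one cannot get for free: $\vp$ is global and one cannot localize which frequencies dominate a given arc. Nothing in your proposal controls the competition between different wave-packet pieces at a point. Second, and decisively, a positive-proportion lower bound for $\length(\nodal_\vp\setminus\bigcup C_\alpha)$ requires a lower bound for the length of the nodal set in $\lambda^{-1}$-squares away from the bad set; this is precisely the Donnelly--Fefferman input Lemma~\ref{Lemma4.9} (at least half the squares $Q_\nu$ carry $\gtrsim\lambda^{-1}$ of nodal length). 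Without invoking something of this strength you cannot convert an upper bound on the area $|\Omega|$ of the bad region into the lower bound \eqref{1.17}; upper bounds on length per square (Lemma~\ref{Lemma4.8}) alone will not produce a positive constant $c_0$. Your closing paragraph candidly flags that the localization ``doesn't come for free,'' which is correct --- the paper's mechanism for making it work (the arithmetic Jensen inequality \eqref{4.16} and the Donnelly--Fefferman lower bound) is absent from your outline.
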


The proofs of Theorems~\ref{Theorem2} and \ref{Theorem3} make
essential use of the results of Donnelly and Fefferman \cite{D-F}.

\medskip

\subsection{Total curvature} 
Another geometric characteristic of nodal lines that one can investigate is 
their {\em total curvature}.  

For $C^2$ curves in $\R^3$, if $\gamma:[0,\ell] \to C $ 
is a $C^2$ arc length parametrization then the total curvature is 
\begin{equation}\label{integral formula}
K(C)=\int_0^\ell ||\ddot{\gamma}(s)|| ds \;.  
\end{equation}

When one varies the curve $C$, the formula \eqref{integral formula} is
clearly continuous in the $C^2$ topology and hence can be used to
define the total curvature of any continuous curve as the limit of
the total curvature of its smooth perturbations. 
However, there is definition of total curvature which
makes sense for any continuous curve, 
which starts with defining the total curvature of a polygon as the sum of
the  angles subtended by the prolongation of 
any of its sides and the next one, 
and then for any continuous curve $C$ setting 
\be \label{polygon formula}
K(C) = \sup_{P} K(P)
\ee
where the supremum
\footnote{Alternatively one can take 
\begin{equation}
  K(C) = \lim_P K(P)
\end{equation}
where the limit is over all polygons $P$ inscribed in $C$ for which
the maximal distance between adjacent vertices tends to zero; this
definition works for curves in arbitrary Riemannian manifolds \cite{CL}.}
is over all polygons $P$ inscribed in $C$. 
One can show that for $C^2$ curves this definition coincides with
\eqref{integral formula}  (see \cite{Milnor}).  

We claim that the total curvature $K_\vp$ of the nodal set for an
eigenfunction $\vp$ with eigenvalue $E$ is bounded by 
\begin{equation}\label{total curvature bd}
  K_\vp \ll E
\end{equation}
Note that there is no lower bound, since the nodal set of the 
eigenfunction $\sin nx$ is a union of non-intersecting lines hence has
zero total curvature. 

To prove \eqref{total curvature bd}, 
it suffices to assume  that nodal set is smooth,
which is easily seen to be a generic condition in the eigenspace  on
the torus, hence a small perturbation in the eigenspace will bring us
to that setting and one then invokes continuity of the total
curvature in the $C^2$-topology.  
In case the nodal set is smooth, one can make the following comment
based on the fact that $\nodal_\vp$ is a semi-algebraic set.
First observe that $\vp$ in \eqref{1.1} may be expressed as
\be
\label{1.5}
\vp(x_1, x_2)=\sum_{\alpha, \beta} a_{\alpha, \beta} (\cos
x_1)^{\alpha_1} (\cos x_2)^{\alpha_2} (\sin x_1)^{\beta_1}
(\sin x_2)^{\beta_2}
\ee
with $\alpha,\beta \in\mathbb Z_+^2$ and $\alpha_1+\alpha_2+\beta_1+\beta_2\leq \sqrt 2\lambda$.

%
Introducing variables $u_1=\cos x_1$, $u_2= \cos x_2$, it follows that  
$u_1, u_2$ satisfy a polynomial equation
\be
\label{1.7}
P(u_1, u_2) =0
\ee
with $P\in\mathbb R[u_1, u_2]$ of degree $d<c\lambda$.
According to \cite[Theorem 4.1, Proposition 4.2]{R}, assuming
$\{P=0\}$ is smooth, its total curvature\footnote{Since the 
total curvature of an arc is the variation of its
  tangent vector, a bound is obtained by integration in $s$ of the
number of solutions in $u=(u_1, u_2)$ of
$$
\begin{cases}
\partial_2P(u)+s\partial_1 P(u)=0\\ P(u)=0.\end{cases}
$$}
is at most $const. d^2 \ll E$.
Since $\nodal_\vp \subset \{P=0\}$ (in the $u_1,
u_2$-parametrization), we may conclude that \eqref{total curvature bd}
holds. 

\subsection{Remarks}

\begin{enumerate}
\item In defining regular arcs, one could make further higher
  derivative assumptions on the parametrization $\gamma$ (as we will
show with an example in Appendix~\ref{appendix:example}, those do not
hold automatically).
Involving higher derivatives would allow to improve upon the estimate
\eqref{1.13}.
We do not pursue this direction here however partly because Definition
\ref{Definition1} would have to be replaced by a more technical one
and it is not clear  which version would be the most natural.

\item

We point out that our estimates for the width are specific to the
flat torus. For instance, they are not valid on the sphere $S^2$.
Indeed, the standard spherical harmonics $Y_{\ell,m} =
P_{\ell,m}(\theta)e^{im\varphi}$ are eigenfunctions for which the
circles of latitude $\{P_{\ell,m}(\theta)=0\}$ are families of
regular arcs with geodesic curvature bounded away from zero.

\item

One can easily obtain the analogue of \eqref{total curvature bd} 
for the total curvature of the nodal sets on the sphere using similar
arguments to those on the torus. 
At the time of this writing, it is not clear to us if there is 
an estimate of the type \eqref{total curvature bd} 
for general real-analytic surfaces, or even, more modestly,
any explicit bound $K_\varphi<K(E)$ for the total curvature.

\end{enumerate}

\bigskip

\noindent{\bf Acknowledgements:} We thank Misha Sodin for his
comments. 
J.B. was supported in part by N.S.F. grant DMS 0808042.
Z.R. was supported by the Oswald Veblen Fund during his stay at the
Institute for Advanced Study and by the Israel Science Foundation
(grant No. 1083/10).

\section{Lattice points on circles}\label{sec:lattice}

In this section, we collect some facts about lattice points on arcs
for later use.
Let
$E=\lambda^2\in\mathbb Z_+$  and
$$
\vE=\mathbb Z^2 \cap \{|x|=\lambda\}
$$
Then $|\vE|=r_2(E)$ is the number of representations of $E$ as a sum of two
squares, which is essentially the number of divisors of $E$ in the
ring of Gaussian integers. In particular one has an upper bound
\be\label{1.2}
r_2(E) \ll \exp c \frac {\log E}{\log\log E}\ll E^\ve \text { for all }
\ve>0\;. 
\ee

The next statement is a slight specification of a more general result
due to Jarnik \cite{J}.

\begin{lemma}\label{Lemma2.1}
Let $P_0, P_1, P_2\in\mathcal E$ be distinct and $|P_0- P_1|\leq|P_0-P_2|$.
Then
\be\label{2.2}
|P_0 -P_2|^2.|P_0-P_1|>c\lambda
\ee
\end{lemma}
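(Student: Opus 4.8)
The plan is to play two different expressions for the area of the triangle $T$ with vertices $P_0,P_1,P_2$ against one another. The key preliminary remark is that three \emph{distinct} points on a circle can never be collinear (a line meets a circle in at most two points), so $T$ is genuinely non-degenerate, and since all three of its vertices lie on $\{|x|=\lambda\}$ its circumradius is exactly $\lambda$.

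First I would bound $\area(T)$ from below: because $P_0,P_1,P_2\in\mathbb Z^2$, twice the area equals $|\det(P_1-P_0,\,P_2-P_0)|$, a non-negative integer which is nonzero by the non-degeneracy just noted, so $\area(T)\ge 1/2$. Next I would bound it from above by the side lengths through the classical identity relating the area of a triangle to its sides and circumradius $R=\lambda$, namely $\area(T)=|P_0-P_1|\,|P_0-P_2|\,|P_1-P_2|/(4\lambda)$; together with $\area(T)\ge 1/2$ this gives
\be
|P_0-P_1|\cdot|P_0-P_2|\cdot|P_1-P_2| \;\ge\; 2\lambda .
\ee
Finally, the hypothesis $|P_0-P_1|\le|P_0-P_2|$ and the triangle inequality give $|P_1-P_2|\le |P_0-P_1|+|P_0-P_2|\le 2|P_0-P_2|$; substituting this into the displayed inequality and cancelling yields $|P_0-P_2|^2\,|P_0-P_1|\ge \lambda$, which is \eqref{2.2} (in fact with the constant taken to be $1$, and strictly, since $|P_1-P_2|<2|P_0-P_2|$ for a non-degenerate triangle).

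I do not anticipate a serious obstacle: the whole argument is a two-line comparison once the two formulas for $\area(T)$ are in hand. The only point deserving any care is the non-degeneracy of $T$ — without it the area could vanish and the half-integrality $\area(T)\ge 1/2$ would be vacuous — but this is handed to us for free by the hypothesis that $P_0,P_1,P_2$ lie on a common circle.
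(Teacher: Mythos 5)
Your proof is correct and rests on the same two pillars as the paper's: the integrality of twice the area of a lattice triangle gives $\area(T)\ge 1/2$, and the geometry of three points on a circle of radius $\lambda$ gives a matching upper bound. The only difference is in how the upper bound is obtained. You use the exact circumradius identity $\area(T)=\tfrac{1}{4\lambda}|P_0-P_1|\,|P_0-P_2|\,|P_1-P_2|$ followed by the triangle inequality $|P_1-P_2|\le 2|P_0-P_2|$, whereas the paper first reduces (harmlessly) to an arc of length $r<\sqrt\lambda$ and bounds the area by base $|P_0-P_1|$ times a sagitta-type height $O(r^2/\lambda)$. Your route is arguably cleaner: it is exact, needs no preliminary reduction on the arc length, and yields the explicit constant $c=1$ — in fact you recover the $m=2$ case of Lemma~\ref{Lemma2.7} ($|P_0-P_1|\,|P_0-P_2|\,|P_1-P_2|\ge\lambda$, here even $\ge 2\lambda$) as an intermediate step, which the paper only notes afterwards as a consequence.
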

\noindent
(here and in the sequel, $c, C$ will denote constants).
\begin{proof}
$P_0, P_1, P_2$ belong to an arc $C\subset\{|x|=\lambda\}$ of size $r$
  and we may obviously assume $r<\sqrt\lambda$.
Since $P_0, P_1, P_2$ are distinct, they span a triangle $T$ of area
$$
0<\area (T) =\frac 12\Big|\det  \begin{pmatrix} 1&P_0\\ 1&
  P_1\\ 1&P_2\end{pmatrix} \Big| \in \frac 12\mathbb Z_+.
$$
Hence, from geometric considerations
$$
\frac 12 \leq \text { area } (T) <c \frac{r^2}\lambda. |P_0-P_1|
$$
\end{proof}

\begin{lemma} \label{Lemma2.3}
Let $P_0, P_1, Q_0, Q_1 \in\mathcal E$ be  distinct  points on an arc
of size $r$. Then
\be\label{2.4}
|P_0-Q_0|.|P_1-Q_1|. r> c\lambda.
\ee
\end{lemma}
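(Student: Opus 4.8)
The plan is to dispose of the trivial range $r\ge\lambda$ first (there $|P_0-Q_0|,|P_1-Q_1|\ge1$ already force \eqref{2.4}), so one may assume $r<\lambda$, and then all six chords joining two of the four points have length $\le r$. The one geometric fact I would use repeatedly is that, since the four points lie on an arc of angular length $\Theta\asymp r/\lambda$, every such chord points in a direction lying in a single angular sector of opening $\le Cr/\lambda$: the chord through the points at angles $\theta,\theta'$ is tangent to the circle at angle $\tfrac{\theta+\theta'}2$, and all these midpoint angles lie in an interval of length $\Theta$. After this reduction I would split into two cases according to whether $P_0-Q_0$ and $P_1-Q_1$ are linearly independent.

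If they are independent the bound is immediate: $\det(P_0-Q_0,P_1-Q_1)$ is a nonzero integer, so $1\le|\det(P_0-Q_0,P_1-Q_1)|=|P_0-Q_0|\,|P_1-Q_1|\,|\sin\alpha|$, where $\alpha$ is the angle between the two chords, and $|\sin\alpha|\le Cr/\lambda$ by the remark above; \eqref{2.4} follows.

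The work is all in the parallel case, $P_0-Q_0=aw$ and $P_1-Q_1=bw$ with $w\in\mathbb Z^2$ primitive and, after relabelling the endpoints within each pair, $a,b\ge1$. Here I would pass to midpoints. Writing $w^{\perp}$ for the $90^\circ$ rotation of $w$ (again primitive, with $|w^{\perp}|=|w|$) and $\ell=\mathbb R w^{\perp}$, the chord $P_0Q_0$ is perpendicular to $\ell$, so its midpoint $M=\tfrac12(P_0+Q_0)$ lies on $\ell$ with $|M|=\sqrt{\lambda^2-(a|w|/2)^2}$; likewise $N=\tfrac12(P_1+Q_1)\in\ell$ with $|N|=\sqrt{\lambda^2-(b|w|/2)^2}$, and both $|M|,|N|$ lie in $(\tfrac\lambda2,\lambda]$ because $a|w|,b|w|\le r<\lambda$. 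Since $M-N$ is the orthogonal projection of $P_0-P_1$ onto $\ell$, one has $|M-N|\le|P_0-P_1|\le r<\lambda$, which forces $M,N$ onto the same ray from the origin (otherwise $|M-N|=|M|+|N|>\lambda$), so $|M-N|=\bigl|\,|M|-|N|\,\bigr|$. On the other hand $M-N\in\tfrac12\mathbb Z^2\cap\ell=\tfrac12\mathbb Z w^{\perp}$, and $M\ne N$ (equality would force $\{P_0,Q_0\}=\{P_1,Q_1\}$ if $a=b$, contradicting distinctness, and would put all four points on a single line if $a\ne b$, impossible on a circle); hence $|M-N|\ge\tfrac12|w|$. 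Comparing the two expressions for $|M-N|$ and using $|M|+|N|>\lambda$ gives $|a^2-b^2|\,|w|>2\lambda$, and taking WLOG $b>a$ (so $b\ge2$ and $|a^2-b^2|=(b-a)(b+a)\le 2b^2$) this yields $b^2|w|>\lambda$. Then
\[
|P_0-Q_0|\,|P_1-Q_1|\,r=ab\,|w|^2 r\ge b\,|w|^2 r=\frac{(b^2|w|)\,|w|\,r}{b}>\frac{\lambda\,|w|\,r}{b}\ge\lambda\,|w|^2\ge\lambda,
\]
where the four inequalities use, in turn, $a\ge1$, then $b^2|w|>\lambda$, then $r\ge b|w|=|P_1-Q_1|$, then $|w|\ge1$.

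The main obstacle is precisely this parallel case: it is a genuinely four‑point phenomenon, invisible to the triangle‑area argument underlying Lemma~\ref{Lemma2.1} (which on its own yields \eqref{2.4} only in the range $r\lesssim\lambda^{1/3}$). The key realization is that two parallel lattice chords of the circle must have their midpoints on their common perpendicular diameter $\ell$, separated there by a nonzero half‑integer multiple of the primitive vector $w^{\perp}$; matching that separation against the elementary identity $\bigl|\,|M|-|N|\,\bigr|=\tfrac14|a^2-b^2|\,|w|^2/(|M|+|N|)$ is what produces the estimate, and everything else is bookkeeping.
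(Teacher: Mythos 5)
Your proof is correct, and it is actually more complete than the one in the paper. In the non-parallel case your argument and the paper's are the same in substance: $|\det(P_0-Q_0,P_1-Q_1)|\ge 1$ because it is a nonzero integer, while the two chord directions differ by an angle $O(r/\lambda)$, so $1\le |P_0-Q_0|\,|P_1-Q_1|\cdot O(r/\lambda)$, which is \eqref{2.4}; your packaging via the tangent direction at the midpoint angle $\tfrac{\theta+\theta'}2$ is tidier than the paper's $P_\alpha-Q_\alpha=\Delta_\alpha e^{i\psi_\alpha}$ bookkeeping with its $O(\Delta_\alpha/\lambda)$ error terms, but it is the same estimate. The genuine added value is your treatment of the parallel case. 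The paper's proof simply declares that $P_0-Q_0$ and $P_1-Q_1$ ``are not parallel'' and invokes $|\det|\ge1$ with no justification, yet parallel pairs of lattice chords on a circle can certainly occur. Your own inequality $b^2|w|>\lambda$ together with $b|w|=|P_1-Q_1|\le r$ shows that parallelism forces $r>\sqrt\lambda$; so the paper's proof is complete on the range $r\lesssim\sqrt\lambda$, but for $\sqrt\lambda\lesssim r\lesssim\lambda$ there is a real gap, and your midpoint argument closes it: both midpoints lie on the common perpendicular diameter $\ell=\mathbb R w^{\perp}$, their difference is a nonzero element of $\tfrac12\mathbb Z w^{\perp}$, and matching $\tfrac12|w|\le |M-N|=\bigl|\,|M|-|N|\,\bigr|=\frac{|a^2-b^2|\,|w|^2/4}{|M|+|N|}$ against $|M|+|N|>\lambda$ gives $|a^2-b^2|\,|w|>2\lambda$, from which your final chain $ab|w|^2 r\ge b^2|w|^3>\lambda|w|^2\ge\lambda$ follows. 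All the intermediate steps (same-ray reduction, $M\ne N$, $a\ge1$, $r\ge b|w|$, $|w|\ge1$) check out.
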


\begin{proof}

We may assume $r<\frac 1{100} \lambda$.
For $\alpha= 0, 1$, let
$$
\begin{aligned}
P_\alpha &=\lambda e^{i\theta_\alpha}\\[6pt]
P_\alpha - Q_\alpha &= \Delta_\alpha e^{i\psi_\alpha}.
\end{aligned}
$$
Then (possibly permuting $P_\alpha, Q_\alpha$)
$$
\Delta^2_\alpha =|P_\alpha -Q_\alpha|^2 =2P_\alpha.(P_\alpha - Q_\alpha)=  2\lambda\Delta_\alpha \cos
(\theta_\alpha-\psi_\alpha)
\sim 2\lambda\Delta_\alpha \Big(\frac\pi 2+ \theta_\alpha -\psi_\alpha\Big)
$$
implying that
\be\label{2.5}
\begin{aligned}
\psi_\alpha &= \frac\pi 2+\theta_\alpha+0\Big(\frac{\Delta_\alpha}\lambda\Big)\\
|\psi_0-\psi_1|& =|\theta_0-\theta_1|+ 0\Big(\frac{\Delta_0+\Delta_1}\lambda\Big)
\end{aligned}
\ee
Since the vectors $P_0-Q_0, P_1-Q_1$ are not parallel,
$$
|\det (P_0-Q_0, P_1-Q_1)|\geq 1
$$
and thus
\be\label{2.6}
\Delta_0.\Delta_1.|\psi_0-\psi_1|\geq 1
\ee

From \eqref {2.5} \eqref {2.6}
$$
1\leq \Delta_0\Delta_1 |\theta_0-\theta_1|+0\big(\lambda^{-1} \Delta_0\Delta_1(\Delta_0+\Delta_1)\big)
$$
and
$$
\lambda< 2\Delta_0\Delta_1 |P_0-P_1|+
0\big(\Delta_0\Delta_1(\Delta_0+\Delta_1)\big) < Cr\Delta_0\Delta_1 \;.
$$
\end{proof}

Let us also recall the results from Cilleruelo-Cordoba\cite {C-C} and
Cilleruelo-Granville \cite{C-G}
on the spacing properties of systems $\{P_1, \ldots, P_m\}$ of
distinct elements of $\mathcal E$.

\begin{lemma}\label{Lemma2.7} (\cite{C-C}, \cite{C-G})
$$
\prod_{1\leq i<j\leq m} |P_i-P_j|\geq\begin{cases}
\lambda^{\frac m2 (\frac m2-1)} \text { if $m$ is even}\\
\lambda^{\frac 14(m-1)^2} \text { if $m$ is odd.}
\end{cases}
$$
\end{lemma}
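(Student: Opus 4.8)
The plan is to interpret the product $\prod_{i<j}|P_i-P_j|$ as the modulus of a generalized Vandermonde determinant which, after clearing denominators by means of the relation defining $\mathcal E$, becomes the determinant of a matrix with entries in the Gaussian integers $\Z[i]$ — hence a Gaussian integer, so its modulus is $0$ or at least $1$. The choice of which powers to use in the Vandermonde is dictated by the requirement that the cleared matrix be integral, and the optimal choice will reproduce exactly the even/odd dichotomy in the statement.

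Concretely, regard each $P_i$ as a Gaussian integer; since $P_i\overline{P_i}=|P_i|^2=\lambda^2$, we have $\overline{P_i}=\lambda^2/P_i$. Fix an integer $c$ (to be chosen at the end) and form the $m\times m$ matrix $A=\big(P_i^{\,j-1-c}\big)_{1\le i,j\le m}$, whose columns carry the exponents $-c,-c+1,\dots,m-1-c$. Pulling $P_i^{-c}$ out of the $i$-th row gives, by the usual Vandermonde formula, $\det A=\big(\prod_i P_i^{-c}\big)\prod_{i<j}(P_j-P_i)$. For each column with negative exponent, say exponent $-(c+1-j)$ with $1\le j\le c$, rewrite $P_i^{-(c+1-j)}=\overline{P_i}^{\,c+1-j}\,\lambda^{-2(c+1-j)}$ and multiply that column by $\lambda^{2(c+1-j)}$. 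The resulting matrix $M$ has all entries in $\Z[i]$: the formerly negative-exponent columns now read $\overline{P_i}^{\,c+1-j}$, and the remaining columns read $P_i^{\,j-1-c}$ with nonnegative exponent. Since $\sum_{j=1}^c 2(c+1-j)=c(c+1)$, one gets $\det M=\lambda^{c(c+1)}\det A=\lambda^{c(c+1)}\big(\prod_i P_i^{-c}\big)\prod_{i<j}(P_j-P_i)$.

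Then I would conclude as follows. The quantity $\det M$ is a nonzero Gaussian integer — nonzero because the $P_i$ are distinct and nonzero — so $|\det M|\ge 1$. Taking moduli and using $|P_i|=\lambda$ yields $\lambda^{c(c+1)}\cdot\lambda^{-cm}\cdot\prod_{i<j}|P_i-P_j|\ge 1$, that is, $\prod_{i<j}|P_i-P_j|\ge\lambda^{c(m-1-c)}$. It remains to maximize $c(m-1-c)$ over integers $c$: for $m$ odd the choice $c=(m-1)/2$ gives exponent $(m-1)^2/4$, while for $m$ even the choice $c=m/2-1$ (equivalently $c=m/2$) gives $\frac m2\big(\frac m2-1\big)$, which are precisely the two claimed bounds.

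I do not expect a genuine obstacle here: the substance of the argument is the idea of using a symmetric window of exponents around $0$ so that the relation $\overline{P_i}=\lambda^2/P_i$ makes the cleared matrix integral, and the only points requiring attention are verifying that $M\in M_m(\Z[i])$ and that $\det M\ne 0$, together with the elementary optimization in $c$, which happens to be tight. This is the argument of Cilleruelo and C\'ordoba, refined by Cilleruelo and Granville; no information about nodal lines enters the proof.
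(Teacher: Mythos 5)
Your proposal is correct and is essentially the same as the paper's proof, which presents Ramana's argument: form the Vandermonde-type matrix with a symmetric window of exponents, use $\overline{P_i}=\lambda^2/P_i$ to make it Gaussian-integral, bound its nonzero determinant below by $1$ in modulus, and optimize the exponent window, giving $k=\lfloor m/2\rfloor$. Your matrix $M$ is (a transpose and column reordering of) the paper's $V_{k,m}$ with $k=c$, and your exponent $c(m-1-c)$ is exactly the paper's $km-k(k+1)$.
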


The argument in \cite{C-C} is arithmetic and based on factorization of
$E=\lambda^2$ in Gaussian primes.
The following elegant and much simpler
argument was given by Ramana \cite{Ramana}:
We identify the standard lattice $\Z^2\subset \R^2$ with the Gaussian integers $\Z[\sqrt{-1}]\subset \C$.
If  $\overline{ P}$ denotes the complex conjugate of $P$, then  our condition on the lattice points being on one circle says that
\begin{equation}\label{magnitude of Pj}
 P_j\overline{P_j} = \lambda^2, \quad j=1,\dots,m
\end{equation}
Ramana observed that for any $0\leq k\leq m-1$, we have an identity
\begin{equation} \label{Ramana's identity}
 \lambda^{k(k+1)}\prod_{1\leq i< j\leq m} (P_i-P_j) = \prod_{i=1}^m P_i^k \cdot \det V_{k,m}
\end{equation}
 where $ V_{k,m}$ is the following Vandermonde type matrix
\begin{equation}
 V_{k,m} =   \begin{pmatrix} \overline{P_1}^k & \overline{P_2}^k& \dots & \overline{P_m}^k\\
\overline{P_1}^{k-1}&\overline{P_2}^{k-1}& \dots & \overline{P_m}^{k-1}\\
\vdots \\
1&1&\dots &1\\
P_1&P_2&\dots& P_m \\
\vdots \\
P_1^{m-1-k}&P_2^{m-1-k}& \dots & P_m^{m-1-k}\\
                \end{pmatrix}
\end{equation}
To see this, we compute the RHS of \eqref{Ramana's identity}
by noting that $P_i^k \det V_{k,m}$ is the determinant of the matrix
resulting from multiplying the $i$-th column of $V_{k,m}$ by $P_i^k$, and using \eqref{magnitude of Pj}
one is reduced to computing an ordinary Vandermonde determinant,
yielding the LHS of \eqref{Ramana's identity}.

Once \eqref{Ramana's identity} is established,
we take absolute values and noting that  $|\det V_{k,m}|^2 \geq 1$
since it is a nonzero integer , we get
\begin{equation}
 \lambda^{k(k+1)}\prod_{1\leq i< j\leq m} |P_i-P_j| \geq \lambda^{km}
\end{equation}
Taking $k=\lfloor \frac m2 \rfloor$ gives Lemma~\ref{Lemma2.7}.
\qed


Taking $m=2$ in Lemma~\ref{Lemma2.7}, it follows that
$$
|P_0-P_1| \ |P_1-P_2| \ |P_2 -P_0|\geq \lambda
$$
and we are recovering \eqref {2.2}.
\medskip

Lemma \ref{Lemma2.7} implies a uniform bound $B(\ve)$ on the number of
elements of $\mathcal E$  on an arc $C\subset \{|x|=\lambda\}$ of size
$r<\lambda^{\frac  12-\ve}$.
More precisely
\begin{lemma}[\cite{C-C}]\label{Lemma2.8}
Let $\delta(m) = \frac 1{4\lfloor \frac m2 \rfloor +2}$.
If $C\subset \{|x|=\lambda\}$ is an arc of length
$r<\sqrt{2}\lambda^{\frac 12-\delta(m)}$,  then $\#\vE\cap C \leq m$.
\end{lemma}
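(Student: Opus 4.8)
The plan is a contradiction argument pitting the lattice-point spacing of Lemma~\ref{Lemma2.7} against the elementary fact that points on a short arc are mutually close. Suppose $C$ contains $m+1$ distinct points $P_1,\dots,P_{m+1}\in\mathcal E$. First I would observe that the chord joining two points of $C$ is shorter than the sub-arc between them, which has length at most $r$; hence $|P_i-P_j|\le r$ for all $i\ne j$ and $\prod_{1\le i<j\le m+1}|P_i-P_j|\le r^{\binom{m+1}{2}}$. On the other hand, applying Lemma~\ref{Lemma2.7} to these $m+1$ points — equivalently, reading off Ramana's identity \eqref{Ramana's identity} with $m+1$ in place of $m$ and $k=\lfloor\frac{m+1}{2}\rfloor$, using $|\det V|^2\ge 1$ — gives the lower bound $\prod_{1\le i<j\le m+1}|P_i-P_j|\ge\lambda^{k(m-k)}$. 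Comparing the two bounds yields $r\ge\lambda^{k(m-k)/\binom{m+1}{2}}$.

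The next step is bookkeeping. A short case check according to the parity of $m$ (with $k=\lfloor\frac{m+1}{2}\rfloor$ one has $k(m-k)=\frac{m^2-1}{4}$ for $m$ odd and $\frac{m^2}{4}$ for $m$ even, while $\binom{m+1}{2}=\frac{m(m+1)}{2}$) shows $\frac{k(m-k)}{\binom{m+1}{2}}=\frac12-\delta(m)$, so $r\ge\lambda^{1/2-\delta(m)}$, which already contradicts the hypothesis up to the constant $\sqrt2$. To recover that constant I would sharpen the trivial upper bound on $\prod|P_i-P_j|$. For $m=1$ this is immediate: two distinct lattice points on a common circle have squared distance a positive integer not equal to $1$ (if $P-Q=\pm e_i$ then $|P|^2\ne|Q|^2$), so $|P_1-P_2|\ge\sqrt2$, giving $r\ge\sqrt2=\sqrt2\,\lambda^{1/2-\delta(1)}$. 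For $m\ge2$ the $m+1$ points lie on a nearly straight sub-arc, so parametrizing by arclength and rescaling bounds $\prod|P_i-P_j|$ by $r^{\binom{m+1}{2}}$ times the maximal Vandermonde product of $m+1$ points in $[0,1]$, which for $m+1\ge3$ is at most $2^{-\binom{m+1}{2}/2}$ (a standard Fekete/Chebyshev estimate, e.g. from the monotonicity of the transfinite-diameter approximants, the $n=3$ value $\tfrac14$ already being below $2^{-3/2}$); taking $\binom{m+1}{2}$-th roots contributes exactly the missing factor $\sqrt2$. Either way $r\ge\sqrt2\,\lambda^{1/2-\delta(m)}$, a contradiction.

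Since the whole argument rests on Lemma~\ref{Lemma2.7}, the only real content is the identity $k(m-k)=\binom{m+1}{2}(\frac12-\delta(m))$ and the constant-level refinement; the latter is the one point requiring care, since the regimes $m=1$ and $m\ge2$ are handled by two different elementary inputs (minimal cocircular lattice distance, respectively a Chebyshev-type bound for the product of distances on a near-linear arc). I do not expect any obstacle beyond this.
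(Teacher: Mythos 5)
The paper offers no proof of Lemma~\ref{Lemma2.8}, merely remarking that it follows from Lemma~\ref{Lemma2.7} and attributing the precise constant to \cite{C-C}; your task is thus to supply the derivation, and you do so correctly. The exponent bookkeeping is right: applying the Ramana bound to $m+1$ points with $k=\lfloor\frac{m+1}{2}\rfloor$ gives $\prod_{i<j}|P_i-P_j|\geq\lambda^{k(m-k)}$, and the identity $k(m-k)/\binom{m+1}{2}=\tfrac12-\delta(m)$ checks out in both parities (odd $m$: $\frac{m-1}{2m}$; even $m$: $\frac{m}{2(m+1)}$). The refinement to recover $\sqrt2$ is also sound: for $m=1$ the minimal co-circular lattice distance $\geq\sqrt2$ does the job, and for $m\geq 2$ the chord-$\leq$-arc inequality plus the monotonicity of the Fekete approximants $d_n$ with $d_3=4^{-1/3}<2^{-1/2}$ gives $\prod|P_i-P_j|\leq (r/\sqrt2)^{\binom{m+1}{2}}$, which is exactly what is needed.

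Two small remarks. For $m=1$ the cleanest route is to observe that $|P-Q|^2=2\lambda^2-2P\cdot Q$ is an even positive integer, hence $\geq 2$; this avoids the ad hoc exclusion of $P-Q=\pm e_i$. For $m\geq 2$, the Fekete/transfinite-diameter bound genuinely is the right tool: a cruder observation such as ``one of the consecutive gaps is $\leq r/2$'' would only improve the trivial bound by $2^{1/\binom{m+1}{2}}$, which falls short of $\sqrt2$. Also, the phrase ``nearly straight sub-arc'' is unnecessary -- the chord-$\leq$-arc inequality holds for any rectifiable arc, so no curvature hypothesis enters. Overall this is the natural derivation from Lemma~\ref{Lemma2.7} that the paper has in mind (the original \cite{C-C} argument used Gaussian-integer factorization in place of Ramana's identity, but the downstream logic is identical), and the constant-level care you took is precisely the point requiring attention.
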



Cilleruello and Granville conjectured a uniform bound on the number of lattice points
on any arc of length $\lambda^{1-\epsilon}$:
\begin{conjecture}\label{Conjecture2}\cite[Conjecture 14]{C-G}
Let $0<\epsilon<1$. Then there is some $B_\epsilon >0$ so that the number of lattice points
on any arc  $C\subset \{|x|=\lambda\}$ of length $r<\lambda^{1-\epsilon}$ is at most $B_\epsilon$.
\end{conjecture}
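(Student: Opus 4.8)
The only route I see is to try to push the factorization method of Cilleruelo--Cordoba \cite{C-C} and Ramana, which already yields Lemma~\ref{Lemma2.8}, past the square-root barrier. Recall the mechanism: if $P_1,\dots,P_m\in\vE$ lie on an arc of length $r$, then Ramana's identity \eqref{Ramana's identity} with the optimal choice $k=\lfloor m/2\rfloor$, together with $|\det V_{k,m}|\geq 1$, gives $\prod_{i<j}|P_i-P_j|\geq \lambda^{(\frac12+o(1))\binom m2}$, while trivially $\prod_{i<j}|P_i-P_j|\leq r^{\binom m2}$; comparing the two forces $r\gg\lambda^{1/2-o(1)}$ once $m$ is large, which is exactly Lemma~\ref{Lemma2.8}. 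To reach arcs of length $\lambda^{1-\epsilon}$ one would need the product lower bound improved to roughly $\lambda^{(1-\epsilon')\binom m2}$ for some $\epsilon'<\epsilon$, i.e. the $m$ points should be essentially as spread out as their mere count allows, with only $O_\epsilon(1)$ exceptionally close pairs.

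The plan would therefore be: (i) write down the whole family of identities \eqref{Ramana's identity} for all $k\in\{0,\dots,m-1\}$, obtaining $\lambda^{k(k+1)}\prod_{i<j}|P_i-P_j|=\bigl|\prod_iP_i\bigr|^{k}\,|\det V_{k,m}|$; (ii) aggregate these --- taking a product over several values of $k$, or building a larger determinant from the union of the rows used for different $k$ --- so that the accumulated power of $\lambda$ on the left climbs from $\sim m^2/4$ toward $\sim m^2/2$; (iii) bound the resulting integer from below, using that it is the norm of a nonzero element of $\Z[i]$ of controlled size together with the rigid constraint $P_j\overline{P_j}=\lambda^2$ tying each conjugate to its point. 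An alternative, analytic route would be to express $\#(\vE\cap C)$ as a smoothed exponential sum over $\{|\xi|=\lambda\}$ and try to exploit cancellation coming from the arithmetic of the exponents $a_i$ in $\lambda^2=\prod p_i^{a_i}$ (the case of all $p_i\equiv1\bmod 4$ being the extremal one).

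I expect the main obstacle to be precisely the step where one must control cancellation simultaneously: the determinants $\det V_{k,m}$ all degenerate when the $P_i$ cluster, so they cannot be treated as independent, and there is no known input that upgrades ``the circle $\{|\xi|=\lambda\}$ carries only $\lambda^{o(1)}$ lattice points'' into ``it carries only $O_\epsilon(1)$ of them in any arc of length $\lambda^{1-\epsilon}$''. Every elementary device available --- the Vandermonde products of Lemma~\ref{Lemma2.7}, the area/Jarnik argument of Lemma~\ref{Lemma2.1}, the spacing estimates --- is at bottom a single algebraic relation among the points and saturates exactly at the scale $\lambda^{1/2}$; closing the gap to $\lambda^{1-\epsilon}$ appears to require genuinely new information on the angular distribution of the Gaussian divisors of $\lambda^2$ in very short arcs. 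For this reason the statement is recorded only as a conjecture: the honest proposal is to reduce it to such a short-arc equidistribution property of Gaussian divisors, attempt the multi-determinant aggregation above to gain the extra powers of $\lambda$, and flag that the passage from $\lambda^{1/2}$ to $\lambda^{1-\epsilon}$ is where the real difficulty lies.
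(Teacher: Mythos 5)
This statement is the Cilleruelo--Granville conjecture (\cite[Conjecture~14]{C-G}); the paper records it as a conjecture and neither proves it nor claims to, invoking it only as a hypothesis that would sharpen Theorem~\ref{Theorem2}. Your proposal correctly recognizes this, accurately identifies the square-root barrier inherent in the Cilleruelo--Cordoba/Ramana determinant method behind Lemmas~\ref{Lemma2.7}--\ref{Lemma2.8} as the obstruction, and honestly concludes with a reduction and an attack plan rather than a proof --- which is the right assessment and is fully consistent with the paper's treatment.
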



Conjecture~\ref{Conjecture2} is true for most $E=\lambda^2\in\mathbb
Z_+$, in fact we have the stronger statement that all lattice points
on the circle of  radius $\sqrt{E}$ are well separated. To make sense
of it, recall  that the number of $E\leq N$ which are a sum of two
squares is  asymptotic to a constant multiple of $N/\sqrt{\log N}$


\begin{lemma}\label{Lemma2.9}
 Fix $\epsilon>0$. Then for all but $O(N^{1-\epsilon/3})$ integers
   $E\leq N$, one has
   \begin{equation}\label{2.10}
     \min_{\substack{ x\neq y\in \Z^2\\ |x|^2=|y|^2=E}} |x-y| >(\sqrt{E})^{1-\epsilon}
   \end{equation}
\end{lemma}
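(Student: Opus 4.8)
The plan is to bound the number of integers $E \leq N$ for which the separation \eqref{2.10} fails, by counting the pairs of lattice points that witness the failure. Suppose $E \leq N$ is ``bad'', meaning there exist distinct $x, y \in \Z^2$ with $|x|^2 = |y|^2 = E$ and $|x - y| \leq (\sqrt{E})^{1-\epsilon} \leq N^{(1-\epsilon)/2}$. Write $v = x - y \neq 0$; then $|v| \leq N^{(1-\epsilon)/2}$, so $v$ ranges over $O(N^{1-\epsilon})$ nonzero lattice vectors. Having fixed $v$, I want to count how many circles $\{|x|^2 = E\}$, $E \leq N$, can contain a chord equal to a translate of $v$; equivalently, for how many $x \in \Z^2$ with $|x|^2 \leq N$ is $x - v$ also on the circle of radius $|x|$. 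The condition $|x|^2 = |x - v|^2$ rewrites as $2\,x \cdot v = |v|^2$, a single linear equation in $x$, which confines $x$ to a line $L_v$ in the plane. Thus $x$ lies on $L_v \cap \{|x|^2 \leq N\}$, an arc of length $O(\sqrt{N})$ on a line of rational slope, hence contains $O(\sqrt{N}/|v| + 1)$ lattice points (using that consecutive lattice points on $L_v$ are spaced roughly $|v|$ apart, since $v$ is a direction vector of $L_v$). Each such $x$ determines $E = |x|^2$, so the number of bad $E$ arising from this fixed $v$ is $O(\sqrt{N}/|v| + 1)$.

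Summing over all admissible $v$, the total count of bad $E$ is at most
\be
\sum_{0 < |v| \leq N^{(1-\epsilon)/2}} \Bigl( \frac{\sqrt N}{|v|} + 1 \Bigr)
\ll \sqrt N \sum_{0 < |v| \leq N^{(1-\epsilon)/2}} \frac{1}{|v|} \; + \; N^{1-\epsilon}.
\ee
The first sum is $O\bigl( N^{(1-\epsilon)/2} \bigr)$ by comparison with $\int_1^{N^{(1-\epsilon)/2}} \frac{2\pi r \, dr}{r}$, so the first term is $O\bigl( N^{1 - \epsilon/2} \bigr)$, and combined with the $O(N^{1-\epsilon})$ from the ``$+1$'' we get a total of $O\bigl( N^{1-\epsilon/2} \bigr)$ bad values. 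This is comfortably $O(N^{1 - \epsilon/3})$, as claimed. (One can absorb constants freely; the exponent $\epsilon/3$ in the statement leaves room for a cruder count of lattice points on the line if one prefers to avoid the spacing argument, e.g.\ using $O(\sqrt N / |v| + 1) = O(\sqrt N)$ only when $|v|$ is small.)

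The one point requiring slight care — and the place where a naive argument could go wrong — is the count of lattice points on the line $L_v : 2\,x \cdot v = |v|^2$ inside a disk of radius $\sqrt N$. If $\gcd$ conditions make $L_v$ contain \emph{no} lattice points at all, that $v$ contributes nothing, which is fine. When $L_v$ does contain lattice points, they form a coset of $\Z w$ where $w$ is the primitive vector in the direction of $v$; since $v$ is itself a nonzero integer multiple (of length $\geq |w|$) pointing along $L_v$, consecutive lattice points on $L_v$ are at distance $|w| \leq |v|$, but also $|w| \geq 1$, so the honest bound is $O(\sqrt N / |w| + 1)$. Replacing $|w|$ by $1$ in the worst case gives the weaker count $O(\sqrt N + 1)$, and even then the sum over $v$ is $O(N^{(1-\epsilon)/2} \cdot \sqrt N) = O(N^{1 - \epsilon/2})$ once one notes there are only $O(N^{1-\epsilon})$ vectors $v$ but the relevant refinement is that distinct $x$ on distinct lines are being counted — so in fact the cleanest route is: the pair $(x, v)$ with $x, x-v \in \vE_E$ is determined by $x$ and $v$, $v$ is \emph{determined by} the unordered pair $\{x, x-v\}$, and we are simply bounding $\#\{(x,v) : |x|^2 \leq N,\ 2x\cdot v = |v|^2,\ 0 < |v| \leq N^{(1-\epsilon)/2}\}$, which by fixing $x$ first and counting $v$ on the (bounded) segment $2x \cdot v = |v|^2$ — a circle of radius $\leq N^{(1-\epsilon)/2}$ in the $v$-plane — also yields the bound. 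Either order of summation gives $O(N^{1-\epsilon/2})$, which suffices. The main obstacle is purely bookkeeping: choosing the order of summation and the lattice-point-on-a-line estimate so that the final exponent beats $1 - \epsilon/3$ without overcounting.
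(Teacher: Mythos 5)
Your overall plan is the same as the paper's: call $E\leq N$ exceptional if the circle of radius $\sqrt E$ carries a pair $x,y$ with $0<|x-y|\leq(\sqrt E)^{1-\epsilon}$, set $v=x-y$, observe that $|x|^2=|y|^2$ forces $2x\cdot v=|v|^2$, and bound the number of pairs $(x,v)$ by summing over $v$ the count of lattice points on the line $L_v$ inside the disc of radius $\sqrt N$. That much is exactly right.

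The gap is in the lattice-point-on-a-line count. You assert that $L_v\cap\{|x|\leq\sqrt N\}$ contains $O(\sqrt N/|v|+1)$ lattice points because ``consecutive lattice points on $L_v$ are spaced roughly $|v|$ apart.'' This is false when $v$ is not primitive: if $v=dw$ with $w$ primitive, the spacing is $|w|=|v|/d$, and the correct bound is $O(\sqrt N/|w|+1)$, which can be \emph{larger} than your claimed bound by a factor of $d$. Concretely, for $v=(d,0)$ with $d$ even, $L_v$ is the vertical line $x_1=d/2$, which carries $\sim2\sqrt N$ lattice points in the disc, while your bound would give $O(\sqrt N/d)$. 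So $O(\sqrt N/|v|+1)$ is not a valid upper bound, and the first display in your proposal does not follow. You do recognize the issue in the second paragraph, but your two attempted repairs both go astray: replacing $|w|$ by $1$ gives each $v$ a contribution $O(\sqrt N)$, and since there are $O(N^{1-\epsilon})$ admissible vectors $v$ (not $O(N^{(1-\epsilon)/2})$, which is the bound on $|v|$, not on their number), the sum is $O(N^{3/2-\epsilon})$, which is worse than trivial; and the ``fix $x$ first'' route requires bounding lattice points on a short arc of a circle of radius $|x|$, which is precisely the Cilleruelo--Granville-type problem the lemma is designed to sidestep.

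The paper's proof closes this gap cleanly: write $z=dz'$ with $z'$ primitive, use the honest count $O(\sqrt N/|z'|)$, and sum over $d$ and $z'$ separately. For each fixed $d$ the inner sum $\sum_{|z'|\leq(\sqrt N)^{1-\epsilon}/d}|z'|^{-1}\ll(\sqrt N)^{1-\epsilon}/d$, and summing over $d\leq(\sqrt N)^{1-\epsilon}$ picks up a harmless $\log N$, giving $O(N^{1-\epsilon/2}\log N)\ll N^{1-\epsilon/3}$. If you replace your line-count and sum with this primitive decomposition, the rest of your argument goes through verbatim.
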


 \begin{proof}
   We will say that $E\leq N$ is ``exceptional'' if there is a pair of
   close points $|x|^2=|y|^2=E$,
   $0<|x-y|<\sqrt{E}^{1-\epsilon}$. Writing $z=x-y$, we see that
   the number of exceptional $E$'s is bounded by the number of pairs
   of integer vectors $x\in \Z^2$, $0\neq z\in \Z^2$ with
   \begin{equation}
     |x|^2 \leq N, \quad 0<|z|<\sqrt{E}^{1-\epsilon}
   \end{equation}
and satisfying
\begin{equation}\label{2xz=|z|^2}
  2x\cdot z = |z|^2
\end{equation}

Writing $z=dz'$ with $d\geq 1$  and $z'\in \Z^2$ primitive, we see
that the number of $x<\sqrt{N}$ lying on the line \eqref{2xz=|z|^2} is
$O(\sqrt{N}/|z'|)$ and hence the number of exceptional $E\leq N$ is
dominated by
\begin{equation}
  \sum_{1\leq d\leq \sqrt{N}^{1-\epsilon} } \sum_{\substack{ z'\in
      \Z^2 \mbox{ primitive}\\ |z'|\leq (\sqrt{N})^{1-\epsilon}/d}}
  \frac{\sqrt{N}}{|z'|}  \ll \sqrt{N}  \sum_{1\leq d\leq
    \sqrt{N}^{1-\epsilon} } \frac{(\sqrt{N})^{1-\epsilon}} {d} \ll
  N^{1-\epsilon/2} \log N
\end{equation}
which proves our claim.
 \end{proof}

%
%
%

\section{The width of a regular arc}

Recall that the width of a convex curve $C$ is defined as the
minimal distance between a pair of parallel supporting lines of the
curve. We denote it by $w(C)$. 
\begin{lemma}
Let $C$ be a regular arc, that is admitting an arc length
parametrization $\gamma:[0,\ell]\to C$ with curvature pinched by
$\kappa <|\ddot{\gamma}|<2\kappa$ and with total curvature bounded by
$2\kappa \ell <1$. Then the width of $C$ is commensurate with
\be
\label{1.11'}
w(C) \approx \ell^2\kappa
\ee
\end{lemma}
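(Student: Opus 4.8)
The plan is to estimate both supporting-line directions of the convex hull of a regular arc $C$ by working in a coordinate frame adapted to the endpoints of $\gamma$. First I would place the chord from $\gamma(0)$ to $\gamma(\ell)$ along the $x$-axis and write $\gamma(s)=(x(s),y(s))$. The total curvature bound $2\kappa\ell<1$ guarantees that the tangent direction $\dot\gamma(s)$ turns by less than $1$ radian over the whole arc, so $\dot x(s)$ stays bounded below (say $\dot x(s)>1/2$ after rotating so the initial tangent is roughly horizontal) and the curve is a graph $y=f(x)$ over its chord; in particular $C$ is convex and its convex hull is the region between the graph and the chord.

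Next I would extract the two key geometric quantities. The width in the direction perpendicular to the chord is the maximal value of $y(s)$, i.e.\ $\max_s \dist(\gamma(s),\text{chord})$; by integrating the curvature twice (the second derivative of $y$ as a function of arc length is controlled by $|\ddot\gamma|\in(\kappa,2\kappa)$, and the first-order term is killed because $y$ vanishes at both endpoints) one gets $\max_s y(s)\approx \kappa\ell^2$. This gives the lower bound $w(C)\gtrsim \kappa\ell^2$ for free, since the width is the \emph{minimum} over all directions and in particular is at most the ``height'' over the chord — wait, that is the wrong inequality, so more care is needed: the width is the minimum over directions of the extent of the hull in that direction, and I must show the minimizing direction is (up to a constant) the one perpendicular to the chord. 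For the \emph{upper bound} $w(C)\lesssim \kappa\ell^2$ it suffices to exhibit one pair of parallel supporting lines at distance $\lesssim\kappa\ell^2$: take the two lines perpendicular to the chord's \emph{normal}, i.e.\ the line through the chord itself (which supports the hull from below since $C$ lies on one side) and the parallel line tangent to $C$ at its highest point; their separation is exactly $\max_s y(s)\approx\kappa\ell^2$. For the \emph{lower bound}, I would show that in \emph{every} direction the hull has extent $\gtrsim\kappa\ell^2$: parametrize directions by the angle $\phi$ they make with the chord normal. For $\phi$ small the extent is comparable to $\max y\approx\kappa\ell^2$; for $\phi$ close to $\pi/2$ (nearly along the chord) the extent is at least the projected chord length $\ell\cos(\text{small})\approx\ell$, which dominates $\kappa\ell^2$ because $2\kappa\ell<1$. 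A uniform estimate interpolating between these regimes — using that the tangent turns monotonically and by a total angle $\Theta:=\int|\ddot\gamma|\in(\kappa\ell,2\kappa\ell)$ — shows the minimum is $\approx\kappa\ell^2$; concretely, the extent in direction $\phi$ is $\gtrsim \max\{\kappa\ell^2, \ell\cdot\text{gap between }\phi\text{ and the tangent-angle interval}\}$, and the two bounds cross at $\kappa\ell^2$.

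The main obstacle I anticipate is making the lower bound fully uniform in the direction $\phi$: one must rule out that some oblique pair of supporting lines squeezes the hull to a scale much smaller than $\kappa\ell^2$. The cleanest route is to note that a supporting line in direction $\phi$ touches the hull either at an endpoint of the chord or at an interior point where the tangent is perpendicular to $\phi$; because the tangent direction sweeps monotonically through an interval of length $\Theta\approx\kappa\ell$, for any $\phi$ at most one interior contact point exists on each side, and a short computation with the quadratic Taylor expansion $y(x)\approx \frac12 f''\, x(\ell-x)$ with $f''\approx\kappa$ (valid since higher-order corrections are smaller by the total-curvature bound) shows the gap between the two supporting lines is always $\gtrsim\kappa\ell^2$. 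Everything here is elementary once the chord frame is set up; the only place requiring the hypotheses \eqref{1.9} and \eqref{1.10} in an essential way is to ensure $C$ is a convex graph over its chord with controlled second derivative, which is exactly what the pinching plus small-total-curvature conditions provide.
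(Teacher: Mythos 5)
Your proposal follows essentially the same route as the paper: write $C$ as a graph $y=f(x)$ over the chord, use the total-curvature bound to get $|f'|\lesssim 1$ and hence $|f''|\approx\kappa$, identify the height $\max f\approx\kappa\ell^2$ (via a Taylor expansion about an endpoint, killing the first-order term because $f$ vanishes there) as the extent in the direction perpendicular to the chord, and then argue this direction essentially minimizes the extent. The two treatments differ only in how they nail down the lower bound $w(C)\gtrsim\kappa\ell^2$: the paper parametrizes the supporting lines by the point of tangency $t$, writes out the inter-line distance $d(t)=g(t)/\sqrt{1+f'(t)^2}$ explicitly with $g(t)=f(t)+(L-t)f'(t)$ or $f(t)-tf'(t)$, and checks $g'$ has the right sign on each side of $x_0$ so that $g$ is minimized at $x_0$; you instead parametrize by the angle $\phi$ of the direction and sketch an interpolation between the perpendicular regime (extent $\approx\kappa\ell^2$) and the near-chord regime (extent $\approx\ell$). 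Your sketch has the right idea but is less airtight than the paper's monotonicity computation — the phrase ``the extent is $\gtrsim\max\{\kappa\ell^2,\ell\cdot\text{gap}\}$'' is not quite a proof, and the cleanest way to close it is either the paper's explicit $d(t)$ formula or the observation that the convex hull contains the triangle with base the chord and apex the highest point, whose minimal width is comparable to its height $\approx\kappa\ell^2$, and width is monotone under inclusion of convex bodies. Also note you do need to rotate the frame so $\dot x>0$ throughout (the total-curvature bound makes this possible), which you flag but could make more precise; the paper simply assumes this normalization.
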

\begin{proof}
We may present $C$ as the graph of a function $f$:
$$ C = \{(x, f(x)): 0<x<L \}$$
where $f(x) \geq 0$, and $f(0)=0=f(L)$ (see Figure~\ref{widthfig}).

\begin{figure}[h]
\begin{center}
  \includegraphics[width=70mm]
{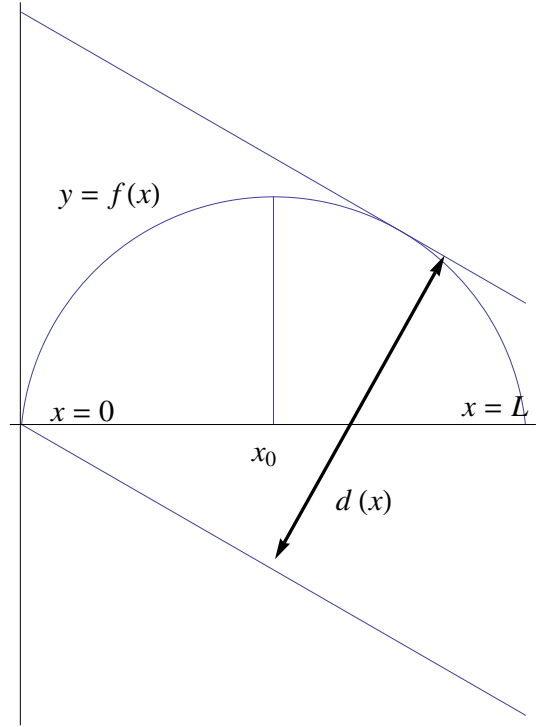}
 \caption{Computing the width of a regular arc}
\label{widthfig}
\end{center}
\end{figure}

 Note that our assumptions in particular imply that the arc is convex,
 since there are no inflection points (the curvature is nowhere zero)
 and the total curvature is small. Hence $f''<0$, and
the function $f$ has a unique
 critical point at $x_0\in (0,L)$ where $f$ is maximal.

We now note that the assumption of total curvature being at most
$1$ implies a bound for the derivative of $f$:
$$ |f'(x)| < 2 $$
Indeed,  $f'(x)=\tan \theta(x)$ where $\theta(x)$ is the angle between the
tangent vector to the arc at $(x, f(x))$ and the $x$-axis. At the
point $x_0$ we have $\theta(x_0)=0$ and the total variation of
$\theta$ is just the total curvature which is at most $1$. Hence
$|f'(\theta)|\leq \tan 1 < 2$.

The curvature at the point $(x,f(x))$ is
$$ \kappa(x) = \frac{|f''(x)|}{(1+f'(x)^2)^{3/2}}
$$
Since $|f'|<1$, the second derivative $f''$ and the curvature
$\kappa(x)$ are commensurable and so $|f''(x)|$ is commensurate with
$\kappa$:
\begin{equation}
  |f''(x)| \approx \kappa
\end{equation}

We claim that the width of $C$ is the value of $f$ at the critical
point $x_0$:
 \begin{equation}
   w(C) = f(x_0)
 \end{equation}
To see this, note that the supporting line $L_1(t)$ of $C$ at the point
$(t,f(t))$ for $0<t<L$ is the tangent line
 $$L_1(t): y=f'(t)x + f(t)-tf'(t)$$
At $t=x_0$ this is the line $y=f(x_0)$ and the other supporting line
$L_2(x_0)$ of $C$ parallel to it is the $x$-axis $y=0$, and $f(x_0)$ is the
distance between these two lines. For $0<t<x_0$ the other supporting line
$L_2(t)$ parallel to $L_1(t)$ goes through the end point $(L,0)$ of the arc,
with equation
$$ L_2(t) : y=f'(t)(x-L) ,\qquad 0<t<x_0 $$
while for $x_0<t<L$, the line $L_2(t)$ passes through the origin
$(0,0)$ with equation
$$ L_2(t): y=f'(t)x,\qquad x_0<t<L $$
Hence the distance between $L_1(t)$ and $L_2(t)$ is
\begin{equation}
  d(t) = \frac{g(t)}{\sqrt{1+f'(t)^2}}, \quad g(t) =
  \begin{cases}
    f(t) + (L-t)f'(t),& 0<t<x_0 \\ f(t)-tf'(t),& x_0<t<L
  \end{cases}
\end{equation}

Since $|f'(t)|< 2$, this shows that
\begin{equation}
  d(t) \approx g(t)
\end{equation}
and it suffices to show that
$$g(t) \geq g(x_0)=f(x_0)$$
If $0<t<x_0$ then $g'(t)=(L-t)f''(t)<0$ so $g$ is decreasing and so
$g(t) >g(x_0) = f(x_0)$, while if $x_0<t<L$ then $g'(t)= -tf''(t)>0$
so $g$ is increasing and so $g(t)>g(x_0) = f(x_0)$.

Having established that $w(C) \approx f(x_0)$, it remains to show that
$$
f(x_0) \approx \kappa \ell^2
$$
Assuming say that $x_0 \leq L/2$, we expand $f$ in a Taylor series
around the endpoint $x=L$ further from $x_0$, finding
\begin{equation}
   0=f(L)= f(x_0)  + f'(x_0)(L-x_0) + \frac 12 f''(y)(L-x_0)^2
\end{equation}
for some $x_0<y<L$. Using $f(L)=0$, $f'(x_0) =0$ and $f''<0$,
$|f''|\approx \kappa$ and $L/2\leq L-x_0\leq L$ we get
$$
f(x_0) = -\frac 12 f''(y)(L-x_0)^2 \approx \kappa L^2
$$
Now  note that $L\approx \ell$ because
$$ \ell = \int_0^L \sqrt{1+f'(t)^2}dt \in [L,3 L]$$
using  $|f'(x)|<2$. Hence $f(x_0)\approx \kappa \ell^2$ as claimed.
\end{proof}

\section{Local estimates on the width}

\subsection{Fourier transforms of arcs}
We  establish some bounds on the Fourier transform of
measures supported by ``regular'' arcs.

Let $\gamma:[0, \ell]\to C$ be an arc-length parameterization of the
regular arc $C$, so that $|\dot\gamma|=1$, and
$\kappa<|\overset{..}\gamma|< 2\kappa$.
Note that if $\xi\in\mathbb R^2, |\xi|= 1$ and $0<\rho<\frac{\ell
  \kappa}{10}$, then
\be\label{def of I_xi}
I_\xi =\{t\in I : |\xi\cdot \dot\gamma(t)|<\rho\}
\ee
is an interval of size at most $O(\rho/\kappa)$.

Indeed, the length of $I_\xi$ can be computed as
\begin{equation}
 \int_{I_\xi} dt = \int_{|u|<\rho} \frac{du}{| \xi \cdot \ddot{\gamma}(t)|}
\end{equation}
one using the change of variable $u= \xi \cdot \dot{\gamma}(t)$.
Denoting by $\theta(t)$ the angle between $\xi$
and the unit tangent $\dot{\gamma}(t)$ to the curve, so that by assumption
$|\cos \theta(t)| = | \xi \cdot \dot{\gamma}(t)  | <\rho$,
we have on noting that $\ddot{\gamma}$ is the normal vector to the curve, that
\begin{equation}
 | \xi \cdot \ddot{\gamma}(t)| = | \ddot{\gamma}(t)| |\sin \theta(t)|
> \kappa \sqrt{1-\rho^2}
\end{equation}
and hence
\begin{equation}
 \length I_\xi < \frac{2\rho}{\kappa \sqrt{1-\rho^2}} \ll \frac \rho \kappa
\end{equation}
since  $\rho<\ell\kappa/10<1/10$.

\medskip

\begin{lemma}\label{Lemma3.1}
Let $\xi\in\mathbb R^2\backslash\{0\}$ and assume
\be\label{3.2}
\Big|\frac \xi{|\xi|} \cdot \dot{\gamma} (t)\Big|>\rho
\text { for all } t\in[0, \ell] \;.
\ee
Let $\omega:\mathbb R\to\mathbb R_+$, $\supp \omega\subset [0,
  \ell]$ satisfy
\be\label{3.3}
\int\omega =1 \text { and } \int|\omega'|<\frac c\ell \;.
\ee
Then
\be\label{3.4}
\Big|\int e\big(\xi\cdot \gamma(t)\big)\omega(t) dt\Big|
< \frac c{\rho|\xi|} \Big(\frac 1\ell+\frac \kappa\rho\Big)
\ee
(where $c$ denotes various constants).
\end{lemma}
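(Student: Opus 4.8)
The plan is to estimate the oscillatory integral $\int e(\xi\cdot\gamma(t))\,\omega(t)\,dt$ by a single integration by parts, exploiting the non-stationary-phase hypothesis \eqref{3.2}. Write $\phi(t)=\xi\cdot\gamma(t)$, so that $\phi'(t)=\xi\cdot\dot\gamma(t)$ and $\phi''(t)=\xi\cdot\ddot\gamma(t)$. By hypothesis $|\phi'(t)|>\rho|\xi|$ for all $t\in[0,\ell]$, so $\phi'$ never vanishes and in particular keeps a fixed sign on the interval. I would write $e(\phi(t))=\frac{1}{2\pi i\phi'(t)}\frac{d}{dt}e(\phi(t))$ and integrate by parts: since $\omega$ has support inside $[0,\ell]$ the boundary terms vanish, leaving
\begin{equation}
 \int e(\xi\cdot\gamma(t))\,\omega(t)\,dt = -\frac{1}{2\pi i}\int e(\phi(t))\,\frac{d}{dt}\!\left(\frac{\omega(t)}{\phi'(t)}\right)dt.
\end{equation}

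The next step is to bound $\frac{d}{dt}(\omega/\phi')$ in $L^1$. The derivative splits as $\omega'/\phi' - \omega\phi''/(\phi')^2$. For the first term, $|\omega'/\phi'|\le |\omega'|/(\rho|\xi|)$, and $\int|\omega'|<c/\ell$ by \eqref{3.3}, giving a contribution $\ll \frac{1}{\rho|\xi|\ell}$. For the second term, $|\phi''(t)|=|\xi\cdot\ddot\gamma(t)|\le |\xi|\,|\ddot\gamma(t)|<2\kappa|\xi|$ by the curvature pinching, so $|\omega\phi''/(\phi')^2|\le \omega\cdot 2\kappa|\xi|/(\rho|\xi|)^2 = \frac{2\kappa}{\rho^2|\xi|}\,\omega$, and since $\int\omega=1$ this contributes $\ll \frac{\kappa}{\rho^2|\xi|}$. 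Adding the two bounds yields
\begin{equation}
 \Big|\int e(\xi\cdot\gamma(t))\,\omega(t)\,dt\Big| \ll \frac{1}{\rho|\xi|}\Big(\frac{1}{\ell}+\frac{\kappa}{\rho}\Big),
\end{equation}
which is exactly \eqref{3.4}.

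There is no serious obstacle here; the lemma is a textbook non-stationary-phase bound and the only points requiring the hypotheses are: using $\supp\omega\subset[0,\ell]$ to kill the boundary terms, using \eqref{3.2} for the lower bound on $|\phi'|$, using the regularity bound $\int|\omega'|<c/\ell$ from \eqref{3.3}, and using the curvature pinching $|\ddot\gamma|<2\kappa$ to control $\phi''$. The mildly delicate point to state carefully is that $\phi'$ is continuous and nonvanishing on the compact interval $[0,\ell]$ — which follows from $\gamma\in C^2$ together with \eqref{3.2} — so that $1/\phi'$ is $C^1$ and the integration by parts is legitimate. I would also remark that the same argument, applied $k$ times, gives the stronger decay $\ll (\rho|\xi|)^{-k}(\ell^{-1}+\kappa\rho^{-1})^{\,?}$ under higher-derivative assumptions on $\gamma$ and $\omega$, which connects to the first item in the Remarks subsection, though we do not need it here.
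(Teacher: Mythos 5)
Your proof is correct and is essentially the same argument as the paper's: a single non-stationary-phase integration by parts (the paper phrases it as the change of variables $u=\xi\cdot\gamma(t)$, which amounts to the same estimate $|\int e(\phi)\omega|\leq\int|\tfrac{d}{dt}(\omega/\phi')|$), followed by the splitting into $\dot\omega/\phi'$ and $\omega\phi''/(\phi')^2$ and the bounds $|\phi'|>\rho|\xi|$, $|\phi''|<2\kappa|\xi|$, $\int|\dot\omega|<c/\ell$, $\int\omega=1$. Your extra remarks about the vanishing boundary terms and the legitimacy of dividing by $\phi'$ are accurate and only make explicit what the paper leaves implicit.
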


\begin{proof}
A change of variables $u=\xi\cdot \gamma(t)$ gives
$$
\begin{aligned}
&\Big|\int e\big(\xi\cdot \gamma(t)\big)\omega(t) dt
\Big|\leq \int\Big|\frac d{dt} \Big[\frac{\omega(t)}{\xi\cdot
    \dot\gamma(t)}\Big]\Big|
dt \leq\int\Big\{\frac{|\dot\omega(t)|}{|\xi\cdot \dot\gamma(t)|}+
\frac{\omega(t)|\xi\cdot \overset{..}\gamma(t)|}{|\xi\cdot \dot
\gamma(t)|^2}\Big\}\\
&\leq \frac 1{\rho|\xi|} \int|\dot\omega(t)| +\frac
\kappa{\rho^2|\xi|} \int\omega(t)< \frac c{\rho\ell|\xi|}+
\frac\kappa{\rho^2|\xi|}
\end{aligned}
$$
 from the assumptions.
\end{proof}

Fix $ E\in\mathbb Z_+$ (large), $\lambda=\sqrt E$ and let $\mathcal
E=\mathbb Z^2 \cap \{|x|=\lambda\}$.
Fix $0<c_0<\frac 1{100}$ and take $\rho=c_0\kappa \ell/|\mathcal E|^{2}$.
We let $\xi$ run over all vectors
$\xi=\frac{\xi_1-\xi_2}{|\xi_1-\xi_2|}$, $\xi_1\not= \xi_2$
in $\mathcal E$.
Excluding the corresponding subintervals $I_\xi$ of
\eqref{def of I_xi} from $I$, of length $|I_\xi|<c_0\ell|\vE|^{-2}$,
we obtain

\begin{lemma}\label{Lemma3.5}

There is a collection of at most $|\mathcal E|^2$ disjoint
sub-intervals $I_\tau\subset I$ with the following properties:
\be
\label{3.6}
|I_\tau|> c_0|\mathcal E|^{-2}\ell
\ee
\be
\label{3.7}
\sum|I_\tau|>(1-2c_0)\ell
\ee
\be\label{3.8}
\Big|\dot\gamma(t)\cdot \frac{\xi_1-\xi_2}{|\xi_1-\xi_2|}\Big|> c_0
 \frac{\kappa\ell}{|\vE|^2}\quad  \text { for } \xi_1\not= \xi_2\text
{ in $\mathcal E$ and $t\in I$}
\ee
Let $\omega: \mathbb R\to\mathbb R_+$,  $\supp \omega\subset I_\tau$ satisfy
\be
\label{3.10}
\int\omega=1 \text { and } \int|\dot\omega|\lesssim
\frac{|\vE|^2}{c_0\ell} \;.
\ee

Then for all $\xi_1\not= \xi_2$ in $\mathcal E$
\be\label{3.11}
\Big|\int e\big((\xi_1-\xi_2)\cdot \gamma(t)\big)\omega(t) dt\Big|
\lesssim \frac{|\mathcal E|^4}{c_0^2 \omega (C)} \ \frac 1{|\xi_1-\xi_2|}
\ee
where $\omega(C)=\ell^2\kappa$ is the width of $C$.
\end{lemma}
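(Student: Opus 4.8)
The plan is to apply Lemma~\ref{Lemma3.1} directly to each of the intervals $I_\tau$ produced by the excision procedure, with the wavenumber $\xi = \xi_1 - \xi_2$. The point of property \eqref{3.8} is precisely that the hypothesis \eqref{3.2} of Lemma~\ref{Lemma3.1} is satisfied on $I_\tau$ with the specific value $\rho = c_0 \kappa \ell / |\vE|^2$, since $I_\tau$ was obtained from $I$ by deleting the bad intervals $I_\xi$ where $|\dot\gamma(t)\cdot(\xi_1-\xi_2)/|\xi_1-\xi_2||$ could be small. One should first check that this $\rho$ is admissible, i.e. $0 < \rho < \ell\kappa/10$, which holds because $c_0 < 1/100$ and $|\vE| \geq 2$ (there are at least two lattice points since $\xi_1 \neq \xi_2$ is assumed). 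A mild point to note is that Lemma~\ref{Lemma3.1} is stated for $|\xi|=1$ with the hypothesis phrased in terms of $\xi/|\xi|$, so one either rescales or simply observes that both the hypothesis and conclusion are stated homogeneously in terms of $\xi/|\xi|$ and $|\xi|$; here $|\xi| = |\xi_1-\xi_2|$.

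Next I would feed in the bound on $\int|\dot\omega|$ from \eqref{3.10}, namely $\int|\dot\omega| \lesssim |\vE|^2/(c_0\ell)$, which is compatible with the hypothesis \eqref{3.3} of Lemma~\ref{Lemma3.1} up to the constant $|\vE|^2/c_0$ (the constant $c$ in \eqref{3.3} is allowed to be large, and one tracks it through the proof — indeed the proof of Lemma~\ref{Lemma3.1} only uses $\int|\dot\omega| \leq c/\ell$ to get the term $c/(\rho\ell|\xi|)$, so with our larger bound we simply get $|\vE|^2/(c_0\rho\ell|\xi|)$ in its place). Plugging $\rho = c_0\kappa\ell/|\vE|^2$ into \eqref{3.4} (with the adjusted constant), the right-hand side becomes
\begin{equation}
 \frac{1}{\rho|\xi|}\Big(\frac{|\vE|^2}{c_0\ell} + \frac{\kappa}{\rho}\Big)
 = \frac{|\vE|^2}{c_0\kappa\ell|\xi|}\Big(\frac{|\vE|^2}{c_0\ell} + \frac{|\vE|^2}{c_0\ell}\Big)
 \lesssim \frac{|\vE|^4}{c_0^2\,\kappa\ell^2}\cdot\frac{1}{|\xi|} \;,
\end{equation}
and since $\omega(C) = \ell^2\kappa$ by \eqref{1.11'} (established in the previous section), this is exactly $|\vE|^4/(c_0^2\,\omega(C))\cdot 1/|\xi_1-\xi_2|$, which is \eqref{3.11}. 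The two terms inside the parenthesis are comparable precisely because of how $\rho$ was chosen — this is the reason for the specific power $|\vE|^{-2}$ in the definition of $\rho$.

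The only genuine work, then, is bookkeeping: one must verify that the collection $\{I_\tau\}$ claimed in Lemma~\ref{Lemma3.5} actually exists with properties \eqref{3.6}--\eqref{3.8}. This is the excision argument sketched just before the lemma statement: there are at most $\binom{|\vE|}{2} < |\vE|^2/2$ directions $\xi = (\xi_1-\xi_2)/|\xi_1-\xi_2|$, for each the bad set $I_\xi$ of \eqref{def of I_xi} has $|I_\xi| = O(\rho/\kappa) = O(c_0\ell/|\vE|^2)$ by the computation preceding Lemma~\ref{Lemma3.1}, so the total excised length is $O(c_0\ell)$, and after removing these from $I = [0,\ell]$ we are left with finitely many intervals of total length $(1-2c_0)\ell$; discarding any leftover pieces shorter than $c_0|\vE|^{-2}\ell$ costs at most another $O(c_0\ell)$ and absorbs into the constants, giving \eqref{3.6} and \eqref{3.7}, while \eqref{3.8} is immediate from the construction. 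I expect no real obstacle here — the main point to be careful about is threading the explicit constant $|\vE|^4/c_0^2$ correctly through the chain of inequalities rather than absorbing it into an anonymous constant, since it is this explicit dependence that will matter in the applications in later sections.
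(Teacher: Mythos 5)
Your proposal is correct and follows the same route as the paper's (very terse) argument: excise the bad sets $I_\xi$ with $\rho = c_0\kappa\ell/|\vE|^2$, then feed the resulting nondegeneracy \eqref{3.8} and the $L^1$-bound \eqref{3.10} on $\dot\omega$ into the integration-by-parts estimate of Lemma~\ref{Lemma3.1}; the explicit arithmetic $\frac{1}{\rho|\xi|}\bigl(\frac{|\vE|^2}{c_0\ell}+\frac{\kappa}{\rho}\bigr)\lesssim\frac{|\vE|^4}{c_0^2\kappa\ell^2|\xi|}$ and the identification $w(C)\approx\kappa\ell^2$ are exactly the substitutions the paper is gesturing at when it says ``\eqref{3.11} follows from \eqref{3.4} and the above choice of $\rho$.'' Your observation that the two terms balance precisely because of the choice $\rho\sim\kappa\ell/|\vE|^2$ is the right way to see why that exponent of $|\vE|$ appears, and the bookkeeping on the excision (at most $|\vE|^2$ intervals $I_\xi$, each of length $O(\rho/\kappa)=O(c_0\ell/|\vE|^2)$, plus discarding short leftovers) matches the construction in the paragraph preceding the lemma.
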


The estimate \eqref{3.11} follows indeed from \eqref{3.4} and the above
choice of $\rho$.
\medskip

Returning to Theorem~\ref{Theorem1}, we simply replace $I$ by some
$I_\tau$ and $C$ by $C_\tau=\gamma(I_\tau)$.
Redefining $\ell=|I_\tau|$, we have for all $\xi_1\not=
\xi_2\in\mathcal E$ the estimate
\be\label{3.12}
\Big|\int e\big((\xi_1-\xi_2)\cdot \gamma(t)\big)\omega(t) dt\Big|\ll
\frac{\lambda^\ve}{\omega(C) |\xi_1-\xi_2|}
\ee
if $\omega:\mathbb R\to\mathbb R_+$,  $\supp \omega\subset I$ satisfies
\be\label{3.13}
\int\omega=1, \quad \int|\dot\omega|\lesssim \frac 1\ell \;.
\ee

\subsection{The exponent $1/6$} \label{subsec:warmup}
As a warm-up, we show how to prove Conjecture~\ref{Conjecture1} for
almost all energies $E$ and how to obtain a weaker version of
Theorem~\ref{Theorem1} with the exponent $1/6$ instead of $1/3$.

Consider the Fourier expansion of $\vp$:
\be\label{3.14'}
\vp(x)=\sum_{\xi\in\mathcal E}\^\vp(\xi) e(x\cdot \xi).
\ee
Since the Fourier coefficients of $\vp$ satisfy
$\sum_{\xi\in \vE} |\^\vp(\xi)|^2 = ||\vp||_2^2$, we have
$|\^\vp(\xi)|\leq ||\vp||_2$ for
all $\xi\in \vE$ and hence there is some $\xi_0$ for which
$$|\^\vp(\xi_0)|\geq \frac{||\vp||_2}{\sqrt{|\vE|}}$$
Replacing $\vp$ by $\vp/\^\vp(\xi_0)$, we may thus assume
\begin{equation}
  \^\vp(\xi_0) = 1, \quad ||\vp||_2 \leq   \sqrt{|\vE|}
\end{equation}
and in particular $|\^\vp(\xi)|\ll \lambda^\epsilon$ for all $\epsilon>0$.

Assume $C\subset \nodal_\vp$.
Since $\vp\big(\gamma(t)\big)=0$, we obtain for any weight function
$\omega$ as in Lemma~\ref{Lemma3.1} that
\be
\begin{split}
0&=\int_I \vp\big(\gamma(t)\big) e\big(-\xi_0\cdot \gamma(t)\big) \omega(t)dt\\
&=1 + \sum_{\xi\neq \xi_0} \^\vp(\xi) \int_I e\big((\xi-\xi_0)\cdot
  \gamma(t)\big) \omega(t) dt
\end{split}
\ee
By Jarnik, there is at most one frequency $\xi_1\neq \xi_0$ at
distance $\ll \lambda^{1/3}$ from $\xi_0$. For all other frequencies we
use \eqref{3.11}
together with $\sum_\xi |\^\vp(\xi)|^2
  \leq |\vE|\ll \lambda^\epsilon$ to get
\be\label{warmup ineq}
1+ \^\vp(\xi_1) \int e\big((\xi_1-\xi_0)\cdot \gamma(t)\big)
\omega(t)dt
   \ll
\frac{\lambda^\epsilon}{w}\sum_{\xi\neq \xi_0,\xi_1}
\frac {|\^\vp(\xi)|}{|\xi-\xi_0|}
\ll \frac{\lambda^{-1/3+\epsilon}}{w}
\ee

We may now show that Conjecture~\ref{Conjecture1} holds for almost all
$E$. First choose $E=\lambda^2$ satisfying \eqref{2.10}. Then $\xi_1$ does
not exist and $|\xi-\xi_0|> \lambda^{1-\epsilon}$ for $\xi\neq \xi_0$,
hence \eqref{warmup ineq} gives $1\ll \lambda^{-1+\epsilon}/w$, that
is $w\ll \lambda^{-1+\epsilon}$.

Returning to the case of general $E$,
if there is no such $\xi_1$, that is if $\xi_0$ is at distance at
least $\lambda^{1/3}$ from all other frequencies, then \eqref{warmup ineq}
implies $w\ll \lambda^{-1/3+\epsilon}$.

Otherwise, that is if there is a neighbour $\xi_1$, we proceed as
follows:
Start by performing a rotation $T$ of the plane as to insure
\be\label{3.20'}
T(\xi_1-\xi_0)=(|\xi_1-\xi_0|, 0)\in\mathbb R^2.
\ee
Denoting $T\gamma$ again by $\gamma=(\gamma_1,\gamma_2)$,
we obtain from \eqref{warmup ineq} that
\be
1+ \^\vp(\xi_1) \int e\big((|\xi_1-\xi_0| \gamma_1(t)\big)
\omega(t)dt
   \ll \frac{\lambda^{-1/3+\epsilon}}{w}
\ee

Next we specify $\omega$.
Writing $\dot\gamma(s) =e^{i\theta(s)}$, we have $\dot\theta (s) \sim
\kappa$ (or $-\kappa$, which is similar)
and
$$
\overset{..}\gamma_1(s)= -\big(\sin\theta(s)\big)\dot\theta (s) \sim
-\sqrt{1-\dot\gamma_1(s)^2}\kappa.
$$
Therefore there is a suitable restriction of $s\in I_1\subset I$,
$|I_1|\sim\ell$ and some $\rho\gtrsim \kappa\ell$
(recall that $\kappa\ell<1$) such that
\be\label{3.25'}
|\dot\gamma_1(s)-\rho|<c\kappa\ell<\frac \rho{10} \text { for } s\in I_1.
\ee

Let $s_0\in I_1$ be the center of $I_1$.
Define
\be\label{3.26'}
\omega (s)= \frac{\dot\gamma_1(s) \eta\big(\gamma_1(s)
  -\gamma_1(s_0)\big)}{\int_{I_1} \dot\gamma_1(s) \eta\big(\gamma_1(s)
-\gamma_1(s_0)\big) ds}
\ee
where $\eta$ is a bump-function of the form $\eta(x) = \eta_0(\frac
x{\rho \ell})$ with $\eta_0\geq 0$, $\int \eta_0 = 1$ (see
Figure~\ref{bumpfunctonfig}),
\begin{figure}[h]
\begin{center}
  \includegraphics[width=70mm]
{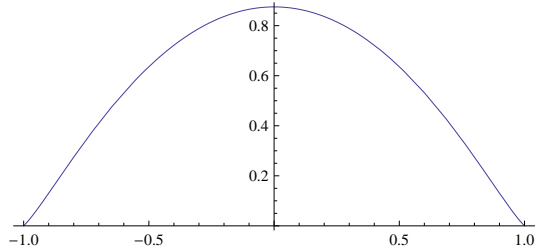}
 \caption{The bump function $\eta_0(t)$}
\label{bumpfunctonfig}
\end{center}
\end{figure}
chosen as to ensure that $\supp\omega \subset I_1$ (we use
\eqref{3.25'} here). Also
$$
0\leq\omega<\frac c\ell, \quad \int\omega=1
$$
and
$$
\int|\omega'|\lesssim \frac\kappa\rho+\frac 1\ell\lesssim \frac 1\ell
$$
and \eqref{3.13} holds.




With the choice \eqref{3.26'} and change of variable $u=\gamma_1(s)
-\gamma_1(s_0)$, one obtains in \eqref{3.23}
\be\label{3.27'}
\begin{split}
\int e\big(|\xi_1-\xi_0| \gamma_1 (t)\big)\omega(t) dt &=
e\big(|\xi_1-\xi_0|\gamma_1 (s_0)\big)
\frac{\int e(|\xi_1-\xi_0| u)\eta(u)du}{\int\eta (u) du}\\
&= e\big(|\xi_1-\xi_0|\gamma_1 (s_0)\big)\frac 1{\rho\ell} \int e(|\xi_1-\xi_0|
u)\eta(u) du\\
& = e\big(|\xi_1-\xi_0| \gamma_1(s_0)\big) \int e(\rho\ell
|\xi_1-\xi_0|t)\eta_0(t)dt
\end{split}
\ee
since $\eta_0(t) =\eta(\rho\ell t)$. Thus we find
\be
1+  e\big(|\xi_1-\xi_0|\gamma_1(s_0)\big) b
 \ll\frac{\lambda^{-1/3+\epsilon}}{w}
\ee
where
\be
b= \^\eta_0(|\xi_1-\xi_0|\rho\ell) \^\vp(\xi_1)
\ee
satisfies $|b|\ll \lambda^\epsilon$.

Note that our choice of $s_0\in I_1$ allows moving $s_0$ within an
interval $I_2\subset I_1$ of
size $|I_2|=\frac 12 |I_1|\sim\ell$.
Since $\gamma_1(I_2)$ contains an interval of size at least
$\sim\rho\ell\gtrsim w$, it follows that
\be\label{3.34'}
\max_{s_0\in I_2} \Big|1 +b e\big(|\xi_1-\xi_0| \gamma_1 (s_0)\big)  \Big|
\geq
\max_{u\in U} \Big|1+b  e(u) \Big|
\ee
where $U\subset\mathbb R$ is some interval of size
$\sim |\xi_1-\xi_0|w$.
Then we have
\be
\eqref{3.34'} \geq \frac 12 \min (1, w |\xi_1-\xi_0|)
\ee
Indeed, if $|b|\geq 3/2$ or $|b|\leq 1/2$ then $|1+b  e(u)|\geq 1/2$,
while if $1/2<|b|<3/2$ then  we can bound
$$
\max_{u\in U} |1+b  e(u) |\geq |b|\max_{u\in U} |\sin u|\geq \frac 14|U|
$$
Thus we find
\be
\min(1, w|\xi_1-\xi_0|) \ll \frac{\lambda^{-1/3+\epsilon}}{w}
\ee
If the minimum is $1$, we get $w\ll\lambda^{-1/3+\epsilon}$. Otherwise
(taking into account $|\xi_1-\xi_0|\geq 1$) we get
$$
w\ll \lambda^{-1/6+\epsilon}
$$

\subsection{Proof of Theorem~\ref{Theorem1}}

Fix some $\xi_0\in \mathcal E$ and enumerate $\mathcal E=\xi_0, \xi_1, \ldots$ such that
\be\label{3.15}
|\xi_0-\xi_j|\leq |\xi_0-\xi_{j+1}|.
\ee
Write
\be\label{3.14}
\vp(x) =\sum_j c_j e(x\cdot \xi_j).
\ee
Let $1< r<\frac 1{10} \lambda$ be a parameter and take $J\in\mathbb Z_+$ with
\be\label{3.16}
|\xi_0-\xi_J|\leq r, \quad |\xi_0 -\xi_{J+1}|\geq r \;.
\ee

Assume $C\subset \nodal_\vp$.
Then since $\vp\big(\gamma(t)\big)=0$, we obtain for any weight function
$\omega$ as in Lemma~\ref{Lemma3.1} that
\be
\begin{aligned}\label{3.17}
0&=\int\vp\big(\gamma(t)\big) e\big(-\xi_0\cdot \gamma(t)\big) \omega(t)dt\\
&=c_0+\sum_{1\leq j\leq J} c_j \int e\big((\xi_j-\xi_0)\cdot
  \gamma(t)\big) \omega(t) dt \\
\end{aligned}
\ee
\be\label{3.18}
+0\Big( \frac{\lambda^\ve} w \sum_{j>J}\frac {|c_j|}{|\xi _0-\xi_j|}\Big)
\ee


Perform a rotation $T$ of the plane as to insure
\be\label{3.19}
T(\xi_J-\xi_0) = (\xi'_J,0), \quad \xi'_J= |\xi_J-\xi_0|
\ee
and denote
\be\label{3.20}
T(\xi_j-\xi_0)=(\xi_j', \zeta_j')\in\mathbb R^2.
\ee
Clearly
\be\label{3.21}
\xi_j'\sim |\xi_j-\xi_0| ,\quad j\leq J
\ee
and
\be\label{3.22}
|\zeta_j'|< 2\frac r\lambda |\xi_j-\xi_0|.
\ee

Denoting $T\gamma$ again by $\gamma=(\gamma_1,\gamma_2)$, we easily obtain
\begin{eqnarray}\label{3.23}
\eqref{3.17} &=&
c_0 +\sum_{1\leq j\leq J}\Big[\int e\big(\xi_j' \gamma_1 (t)\big)
\omega(t) dt\Big]c_j\\
\label{3.24}
&+& 0\Big(\frac r\lambda \sum_{0<j<J} |c_j| \ |\xi_j-\xi_0|\Big).
\end{eqnarray}

Next we specify $\omega$ as in \S~\ref{subsec:warmup}, by picking a
subinterval $I_1\subset I$ $|I_1|\approx \ell$, such that
\be\label{3.25}
|\dot\gamma_1(s)-\rho|<c\kappa\ell<\frac \rho{10} \text { for } s\in I_1.
\ee
for some $\rho\gtrsim \kappa\ell$ (recall that $\kappa\ell<1$).
Let $s_0\in I_1$ be the center of $I_1$.
Define
\be\label{3.26}
\omega (s)= \frac{\dot\gamma_1(s) \eta\big(\gamma_1(s)
  -\gamma_1(s_0)\big)}{\int_{I_1} \dot\gamma_1(s) \eta\big(\gamma_1(s)
-\gamma_1(s_0)\big) ds}
\ee
where $\eta$ is a bump-function of the form $\eta(x) = \eta_0(\frac
x{\rho \ell})$ with $\eta_0\geq 0$, $\int \eta_0 = 1$,
chosen as to ensure that $\supp\omega \subset I_1$ (we use
\eqref{3.25} here). Also
$$
0\leq\omega<\frac c\ell, \quad \int\omega=1
$$
and
$$
\int|\omega'|\lesssim \frac\kappa\rho+\frac 1\ell\lesssim \frac 1\ell
$$
and \eqref{3.13} holds.

With the choice \eqref{3.26} and change of variable $u=\gamma_1(s)
-\gamma_1(s_0)$, one obtains in \eqref{3.23}
\be\label{3.27}
\begin{split}
\int e\big(\xi_j' \gamma_1 (t)\big)\omega(t) dt &=
e\big(\xi_j'\gamma_1 (s_0)\big)
\frac{\int e(\xi_j' u)\eta(u)du}{\int\eta (u) du}\\
&= e\big(\xi_j'\gamma_1 (s_0)\big)\frac 1{\rho\ell} \int e(\xi_j'
u)\eta(u) du\\
& = e\big(\xi_j' \gamma_1(s_0)\big) \int e(\rho\ell \xi_j't)\eta_0(t)dt
\end{split}
\ee
since $\eta_0(t) =\eta(\rho\ell t)$.

\medskip

Therefore
$$
|\eqref{3.27}|< \lambda^{-100} \text { unless } |\xi_j'|< \frac{\lambda^\ve}{\rho\ell}.
$$
Hence, in \eqref{3.23}
\be\label{3.28}
\Big|\int e\big(\xi_j' \gamma_1(t)\big)
\omega(t) dt\Big| <\lambda^{-100} \text{ unless } |\xi_j'|<\frac{\lambda^\ve}{w(C)}
\ee
and from \eqref{3.17}, \eqref{3.18}
\be\label {3.29}
\Big|c_0+\sum_{0<|\xi_j-\xi_0|<\frac{\lambda^\ve}{w}} c_j'\Big[\int e(\rho\ell\xi_j' t)\vp_0 (t)dt\Big]\Big|<
\eqref{3.18}+\eqref {3.24} +\lambda^{-\frac 1{100}}
\ee
where
\be\label{3.30}
c_j' =e\big(\xi_j' \gamma_1(s_0)\big)c_j.
\ee

Arguing by contradiction, assume
\be\label{3.31}
w(C)>\lambda^{-\frac 13+ 5\ve} \;.
\ee
Since $|\xi_2-\xi_0|\geq \lambda^{\frac 13}$, the restriction
$0<|\xi_j-\xi_0|<\frac {\lambda^\ve}{w}$ excludes all
terms, except possibly $j=1$.
Hence, either
\be\label{3.32}
|c_0|< \eqref{3.18}+\eqref {3.24} + \lambda^{-100}
\ee
or
\be\label{3.33}
\Big|c_0+c_1'\Big[\int e(\rho\ell\xi_1' t)\vp_0 (t) dt\Big]\Big|< \eqref{3.18} +\eqref{3.24} +\lambda^{-100}.
\ee

If \eqref{3.33}, we argue as follows.
Recall \eqref{3.30} and note that our choice of $s_0\in I_1$ allows moving $s_0$ within an interval $I_2\subset I_1$ of
size $|I_2|=\frac 12 |I_1|\sim\ell$.
Since $\gamma_1(I_2)$ contains an interval of size at least $\sim\rho\ell\gtrsim w$, it follows that
\be\label{3.34}
\begin{aligned}
&\max_{s_0\in I_2} \Big|c_0+c_1 e\big(\xi_1' \gamma_1 (s_0)\big) \Big[\int e(\rho\ell\xi_1't)\vp_0(t)dt\Big]\Big|\geq\\[6pt]
&\max_{u\in U} \Big|c_0+c_1 e(u) \Big[\int e(\rho\ell\xi_1' t)\vp_0(t) dt\Big]\Big|
\end{aligned}
\ee
where $U\subset\mathbb R$ is some interval of size
$\sim|\xi_1'|w=|\xi_1-\xi_0|w$.
Clearly
\be\label{3.35}
\eqref{3.34} > c\min (1, w |\xi_1-\xi_0|) |c_0|.
\ee
\medskip

In summary, we proved that
\be\label{3.36}
\min (1, w |\xi_1-\xi_0|)|c_0|< \eqref{3.18}+\eqref {3.24}+\lambda^{-100}.
\ee
Taking $r=|\xi_0-\xi_J|$ in \eqref {3.36} gives
\begin{lemma}\label{Lemma3.37}
Fix $\xi_0\in\mathcal E$ and enumerate $\mathcal E=\{\xi_j\}$
according to \eqref{3.15}.
Assuming $w(C)>\lambda^{-\frac 13+ 5\ve}$,
for $J\geq 1$, one has the bound
\be\label{3.38}
\begin{aligned}
&\min(1, w|\xi_1-\xi_0|) |c_0|<  \\[8pt]
& 0 \Big\{\frac{|\xi_J-\xi_0|}{\lambda} \Big(\sum_{0<j<J}
|c_j| \ |\xi_j -\xi_0|\Big)
+\frac {\lambda^\ve}{w} \Big(\sum_{j>J} \frac{|c_j|}{|\xi_j-\xi_0|}\Big)\Big\}
+\lambda^{-100}
\end{aligned}
\ee
\end{lemma}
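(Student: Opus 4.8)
The argument refines the warm-up of \S\ref{subsec:warmup}; the new feature is that we retain the free parameter $r$ (equivalently $J$) and balance two competing error contributions. Fix $\xi_0\in\mathcal E$, enumerate $\mathcal E=\{\xi_j\}$ by increasing distance from $\xi_0$ as in \eqref{3.15}, choose $J$ with $|\xi_0-\xi_J|\le r<|\xi_0-\xi_{J+1}|$, and write $\vp(x)=\sum_j c_j e(x\cdot\xi_j)$. Since $\vp(\gamma(t))\equiv 0$ on $C\subset\nodal_\vp$, testing against $e(-\xi_0\cdot\gamma(t))\omega(t)$ with any weight $\omega$ obeying \eqref{3.13} isolates $c_0$ and splits the rest into a \emph{tail} $j>J$ and a \emph{head} $0<j\le J$, as in \eqref{3.17}--\eqref{3.18}. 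For the tail I would invoke the Fourier-decay bound \eqref{3.12} coming from Lemma~\ref{Lemma3.5}, namely $|\int e((\xi_j-\xi_0)\cdot\gamma)\omega|\ll\lambda^\ve/(w|\xi_j-\xi_0|)$; together with $|\xi_j-\xi_0|>r$ for $j>J$ this is exactly the second error term in \eqref{3.38}.

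For the head I would rotate the plane by $T$ straightening the chord $\xi_J-\xi_0$ as in \eqref{3.19}--\eqref{3.20}. All $\xi_j$ with $j\le J$ lie on an arc of $\{|x|=\lambda\}$ of size $\lesssim r$, so the transverse components satisfy the sagitta bound $|\zeta_j'|\lesssim\frac r\lambda|\xi_j-\xi_0|$ of \eqref{3.22}; hence in each head term one replaces $e(\xi_j'\gamma_1(t)+\zeta_j'\gamma_2(t))$ by a unimodular constant times $e(\xi_j'\gamma_1(t))$, at cost $O(\frac r\lambda|\xi_j-\xi_0|)$ per term (using $|\gamma_2(t)-\gamma_2(s_0)|\le\ell<1$), producing the first error in \eqref{3.38} as in \eqref{3.24}. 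Next I would fix $\omega$ exactly as in \eqref{3.25}--\eqref{3.26}: the curvature pinching gives $\ddot\gamma_1\approx\pm\sqrt{1-\dot\gamma_1^2}\,\kappa$, so there is a subinterval $I_1\subset I$ of length $\approx\ell$ on which $\dot\gamma_1$ stays within $\rho/10$ of some $\rho\gtrsim\kappa\ell$, and one sets $\omega(s)=\dot\gamma_1(s)\,\eta(\gamma_1(s)-\gamma_1(s_0))$ (normalized) with $\eta$ a bump at scale $\rho\ell$ based at $\gamma_1(s_0)$. The change of variables $u=\gamma_1(s)-\gamma_1(s_0)$ turns $\int e(\xi_j'\gamma_1)\omega$ into $e(\xi_j'\gamma_1(s_0))\widehat{\eta_0}(\rho\ell\,\xi_j')$, which by the rapid decay of $\widehat{\eta_0}$ is $<\lambda^{-100}$ unless $|\xi_j'|<\lambda^\ve/(\rho\ell)\le\lambda^\ve/w$; cf.\ \eqref{3.27}--\eqref{3.28}.

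Now I would argue by contradiction: suppose $w(C)>\lambda^{-1/3+5\ve}$. Then $\lambda^\ve/w<\lambda^{1/3}$, so any surviving head frequency obeys $|\xi_j-\xi_0|<\lambda^{1/3}$; but Lemma~\ref{Lemma2.1} forces $|\xi_0-\xi_2|^3\ge|\xi_0-\xi_2|^2|\xi_0-\xi_1|>c\lambda$, hence $|\xi_0-\xi_2|\gtrsim\lambda^{1/3}$, so only $j=0$ and possibly $j=1$ remain. We are left with either $|c_0|<(\text{errors})+\lambda^{-100}$ as in \eqref{3.32}, or $|c_0+c_1'\widehat{\eta_0}(\rho\ell\,\xi_1')|\lesssim(\text{errors})+\lambda^{-100}$ with $c_1'=e(\xi_1'\gamma_1(s_0))c_1$. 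For the amplification, $s_0$ may be slid over $I_2\subset I_1$ of length $\approx\ell$, so $\gamma_1(s_0)$ sweeps an interval of length $\gtrsim\rho\ell\gtrsim w$ and $\xi_1'\gamma_1(s_0)$ sweeps an interval $U$ with $|U|\approx w|\xi_1-\xi_0|$; maximizing $|c_0+b\,e(u)|$ over $u\in U$, where $b=c_1'\widehat{\eta_0}(\rho\ell\,\xi_1')$ satisfies $|b|\ll\lambda^\ve$, gives $\gtrsim\min(1,w|\xi_1-\xi_0|)|c_0|$ by the elementary trichotomy on $|b|$ used in \S\ref{subsec:warmup}. Combining with the two error terms and setting $r=|\xi_0-\xi_J|$ yields \eqref{3.38}.

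I expect the main obstacle to be the head bookkeeping: checking that a \emph{single} rotation straightening $\xi_J-\xi_0$ simultaneously controls every transverse component $\zeta_j'$, $j\le J$, via \eqref{3.22}, and that the unimodular constants $e(\xi_j'\gamma_1(s_0))$ and $e(\zeta_j'\gamma_2(s_0))$ produced by the two changes of variables can be absorbed into the $c_j'$ without spoiling the slide-in-$s_0$ amplification --- note that $e(\zeta_1'\gamma_2(s_0))$ varies by only $O(\frac r\lambda|\xi_1-\xi_0|)$ as $s_0$ ranges over $I_2$, hence is effectively constant there. The remaining steps are a careful but essentially routine reprise of \S\ref{subsec:warmup}.
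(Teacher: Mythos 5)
Your proposal follows the paper's own proof essentially step for step: the same enumeration by distance, the same parameter $J$, the same testing against $e(-\xi_0\cdot\gamma)\omega$, the same single rotation $T$ straightening $\xi_J-\xi_0$ with the sagitta bound \eqref{3.22} feeding into the $O(\tfrac r\lambda\sum|c_j||\xi_j-\xi_0|)$ error, the same bump-function $\omega$ and change of variables $u=\gamma_1(s)-\gamma_1(s_0)$, the same rapid-decay truncation forcing $|\xi_j'|<\lambda^\ve/w$, the same use of Lemma~\ref{Lemma2.1} to rule out $j\geq 2$ under the hypothesis $w>\lambda^{-1/3+5\ve}$, and the same slide-in-$s_0$ amplification giving $\min(1,w|\xi_1-\xi_0|)|c_0|$. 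The ``head bookkeeping'' worry you flag at the end is precisely what the paper's \eqref{3.22}--\eqref{3.24} and \eqref{3.30}--\eqref{3.34} handle, and you resolve it the same way, so nothing is missing.
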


This is our main estimate.
\medskip

We apply \eqref {3.38} with $J=1$ and $J=2$, obtaining the inequalities
\be\label{3.39}
\min (1, w|\xi_1-\xi_0| ) |c_0| < \frac{\lambda^\ve}w \ \frac 1{|\xi_2-\xi_0|}+\lambda^{-100}
\ee
and
\be\label{3.40}
\min(1, w|\xi_1-\xi_0|) |c_0| < c\frac{|\xi_1-\xi_0| \ |\xi_2 -\xi_0|}{\lambda} +\frac{\lambda^\ve}w \sum_{j\geq 3}
\frac{|c_j|}{|\xi_j-\xi_0|}+\lambda^{-100}.
\ee

Start by taking $\xi_0\in\mathcal E$ such that $|c_0|\geq \frac
1{|\mathcal E|^{1/2}}$ (we normalize $\Vert\phi\Vert_2=1$).
 From \eqref {3.39}
$$
w< \frac{\lambda^\ve}{|\xi_2-\xi_0|} +\frac {\lambda^\ve}{(|\xi_1-\xi_0| \ |\xi_2-\xi_0|)^{\frac 12}}
$$
and by \eqref {3.31}, since $|\xi_2- \xi_0|\gtrsim \lambda^{1/3}$, it follows
\be\label{3.41}
|\xi_1-\xi_0| \ |\xi_2 -\xi_0|<\frac {\lambda^{2\ve}}{w^2} < \lambda^{\frac 23 -\ve}.
\ee
 From \eqref {3.40}, \eqref{3.41}, either
$$
w<\frac{\lambda^\ve}{|\xi_3-\xi_0|} +\lambda^{-100} <\lambda^{-\frac 13+\ve} \ \text {(contradicting \eqref{3.31})}
$$
or
$$
w<\lambda^\ve\frac{|\xi_2-\xi_0|}\lambda + \frac{\lambda^\ve}{w|\xi_1-\xi_0|}\Big(\sum_{j\geq 3} \ \frac
{|c_j|}{|\xi_j-\xi_0|}\Big)
$$
and therefore
\be\label{3.42}
w^2 <\frac{\lambda^\ve|c_j|}{|\xi_1 -\xi_0| \ |\xi_j-\xi_0|}  \ \text { for some $j\geq 3$.}
\ee

Next, we apply \eqref{3.39} replacing $\xi_0$ by $\xi_j$.
Enumerate
$$\mathcal E=\{\xi_{j, k}; k=0, 1, \ldots\}$$
where $\xi_{j, 0}=\xi_j$ and
\be\label{3.43}
|\xi_j-\xi_{j, k}| \leq |\xi_j-\xi_{j, k+1}|.
\ee
We obtain
$$
\min (1, w|\xi_{j, 1} -\xi_j|)|c_j| <\frac{\lambda^\ve}w\ \frac 1{|\xi_{j, 2}-\xi_j|} +\lambda^{-100}
$$
and
\be\label{3.44}
|c_j|<\frac{\lambda^\ve}{w|\xi_{j, 2}-\xi_j|}+\frac{\lambda^\ve}{w^2|\xi_{j, 1}-\xi_j| \ |\xi_{j, 2}-\xi_j|}
+\lambda^{-10}.
\ee
\medskip

Substituting \eqref{3.44} in \eqref {3.42}, it follows that either
\be\label{3.45}
w^3<\frac{\lambda^\ve}{|\xi_1 -\xi_0| \ |\xi_j -\xi_0| \ |\xi_{j, 2}-\xi_j|}
\ee
or
\be\label{3.46}
w^4<\frac{\lambda^\ve}{|\xi_1-\xi_0| \ |\xi_j-\xi_0| \ |\xi_{j, 1} -\xi_j| \ \xi_{j, 2} -\xi_j|}.
\ee
Note that obviously $|\xi_{j, 1} -\xi_j|, |\xi_{j, 2}-\xi_j|\leq \max
(|\xi_j-\xi_0|, |\xi_j-\xi_1|)$.

\medskip

We distinguish two cases

\medskip

\noindent
{\bf Case 1.} $\xi_{j, 1} \not\in \{\xi_0, \xi_1\}$.
\medskip

The points $\xi_0, \xi_1, \xi_j, \xi_{j, 1}$ are distinct elements of $\mathcal E$ on an arc of size \hfill\break
$r<8|\xi_j -\xi_0|$.
Lemma~\ref{Lemma2.3} implies that
\be\label{3.47}
|\xi_1-\xi_0| \ |\xi_{j, 1} -\xi_j| \ |\xi_j -\xi_0|\gtrsim \lambda
\ee
and therefore $\eqref{3.45} < \lambda^{\ve-1}, \eqref{3.46} < \lambda^{\ve-4/3}$, a contradiction.

\medskip

\noindent
{\bf Case 2.} $\xi_{j, 1} \in \{\xi_0, \xi_1\}$.
\medskip

Since $|\xi_1-\xi_0|<\frac 1{10} \lambda^{\frac 13}$ by \eqref {3.41}, it follows that
$$
|\xi_j-\xi_{j, 2}|\geq |\xi_j-\xi_{j, 1}|\geq |\xi_j-\xi_0|-|\xi_1-\xi_0|>\frac 12 |\xi_j-\xi_0|.
$$
Hence
\be\label{ineq1}
\eqref{3.45} < \frac{\lambda^\ve}{|\xi_1-\xi_0| \ |\xi_j-\xi_0|^2}
\leq \frac{\lambda^\ve}{|\xi_1-\xi_0| \ |\xi_2-\xi_0|^2}<
\lambda^{\ve-1}
\ee
 by Lemma \ref{Lemma2.1}, and
\be\label{ineq2}
\eqref{3.46} < \frac{\lambda^\ve}{|\xi_1-\xi_0| \ |\xi_j-\xi_0|^3}<
\lambda^{\ve-\frac 43}
\ee
which is again a contradiction.
This completes the proof of Theorem~\ref{Theorem1}. \qed

\medskip
Note that \eqref{ineq1}, \eqref{ineq2} could be saturated, since $\xi_0,
\xi_1, \xi_2, \xi_3$ could lie on an arc of size $\approx
\lambda^{1/3}$; cf \cite{C-G2} for a discussion of this phenomenon.

%

\section
{Local length estimates and the Donnelly-Fefferman doubling exponent}

\subsection{The doubling exponent for the torus}
We will apply the results from \cite{D-F} in the particular setting
$M=\mathbb T^2$.
An additional ingredient is an estimate on the doubling exponent
\be\label{4.1}
\beta(\vp)=\max_B\log\Big(\frac {\max_B|\vp|}{\max_{\frac 12 B}|\vp|}\Big)
\ee
where $B\subset M$ is an arbitrary disc and $\frac 12B$ denotes the disc with same center and half radius.

As shown in \cite{D-F}, assuming $M$ is $C^\infty$-smooth and
$-\Delta\vp= E\vp$, $E=\lambda^2$, one has a general bound
\be\label{4.2}
\beta(\vp)< C_M\lambda.
\ee
It turns out that for $M=\mathbb T^2$, \eqref{4.2} can be considerably improved.

\begin{lemma}\label{Lemma4.3}
For $M=\mathbb T^2, -\Delta\vp =\lambda^2\vp$ and $\mathcal E=\mathbb Z^2\cap
\{|x|=\lambda\}$, one has
\be
\label{4.4}
\beta(\vp)< C|\mathcal E| <\exp c\frac{\log \lambda }{\log\log \lambda}
\ee
(taking $\lambda>10$ say).
\end{lemma}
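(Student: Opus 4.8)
The plan is to bound the doubling exponent $\beta(\vp)$ for the torus by exploiting the fact that an eigenfunction with eigenvalue $\lambda^2$ is a trigonometric polynomial \eqref{1.1} with frequencies on the circle $\{|\xi|=\lambda\}$, so it extends to an entire function of exponential type $\le 2\pi\lambda$ in two complex variables, but — crucially — with much better behaviour because the spectrum has at most $|\mathcal E|$ points. The heart of the matter is that, restricted to any real line in $\TT^2$, the eigenfunction becomes a one-variable exponential sum $t\mapsto \vp(x+tv)=\sum_{\xi\in\mathcal E}a_\xi e(x\cdot\xi)e(t\,v\cdot\xi)$, which is a trigonometric polynomial in $t$ with at most $|\mathcal E|$ distinct frequencies. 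A trigonometric polynomial with $N$ terms, viewed on a short interval as an ordinary polynomial via $e(s)\approx 1+2\pi i s+\dots$, or better via a conformal change of variable, has a doubling exponent $O(N)$ on intervals of length $\ll 1/\lambda$; this is the classical Turán/Nazarov-type phenomenon that the local growth of a sparse exponential sum is controlled by the number of frequencies, not by the analytic type.

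The key steps, in order: First I would reduce the disc estimate in \eqref{4.1} to a one-dimensional statement by noting that $\max_B|\vp|$ and $\max_{\frac12 B}|\vp|$ can be compared after slicing $B$ by lines through its center — more precisely, pick a point $p_0\in \frac12 B$ and a direction $v$ realizing $\max_B|\vp|$ along the chord, so it suffices to bound $\log(\max_{|t|\le R}|g(t)|/\max_{|t|\le R/2}|g(t)|)$ where $g(t)=\vp(p_0+tv)$ and $R=\operatorname{diam}B$. Second, I would treat separately the case $R\lesssim 1/\lambda$ and $R\gtrsim 1/\lambda$. For $R\lesssim 1/\lambda$, the phases $t\,v\cdot\xi$ vary by $O(1)$, so after rescaling $g$ is essentially a polynomial-like object and a Remez/Turán inequality for $N=|\mathcal E|$-sparse exponential sums gives $\beta\ll |\mathcal E|$. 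For $R\gtrsim 1/\lambda$, I would cover $B$ by $O((\lambda R)^2)$ discs of radius $\sim 1/\lambda$ and chain together: the doubling on each small disc is $O(|\mathcal E|)$ by the previous case, but one must control how many small-disc doublings are needed to pass from $\frac12 B$ to $B$. Here the right move is to use that $\vp$ is a genuine polynomial of degree $\ll\lambda$ in $(\cos x_i,\sin x_i)$ — as recorded in \eqref{1.5}–\eqref{1.7} — so that on a disc of radius $O(1)$ the number of oscillations (hence the maximal chain length) is $O(|\mathcal E|)$ as well; alternatively, invoke a direct result bounding the frequency of an exponential sum's oscillation by the number of its frequencies.

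The main obstacle I expect is the $R\gtrsim 1/\lambda$ regime: a naive chaining of small-disc doubling inequalities would accumulate a factor like $(\lambda R)^2$ and only reproduce the general bound \eqref{4.2}, so one must avoid chaining and instead get the $O(|\mathcal E|)$ bound \emph{directly} on discs of all sizes. The cleanest route is the line-restriction reduction combined with a one-variable inequality of the form: if $h(t)=\sum_{j=1}^N b_j e(\mu_j t)$ with distinct real $\mu_j$, then for any interval $I$ and any subinterval $I'\subset I$ with $|I'|\ge\frac12|I|$ one has $\max_I|h|\le e^{CN}\max_{I'}|h|$, with $C$ absolute — this is sharp and known (Turán, Nazarov), and the number of frequencies $N\le|\mathcal E|$ is the only arithmetic input, giving precisely \eqref{4.4} after invoking \eqref{1.2} for the final bound $|\mathcal E|=r_2(\lambda^2)\ll\exp(c\log\lambda/\log\log\lambda)$. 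I would then just need to check that the two-dimensional $\max$ over a disc is controlled by a one-dimensional $\max$ over a chord of comparable length, which is elementary since any point of $B$ lies on a chord through (a neighborhood of) the center.
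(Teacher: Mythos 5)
Your final route---restricting $\vp$ to a chord through the center of $B$, observing that $t\mapsto\vp(p_0+tv)$ is an exponential sum with at most $|\mathcal E|$ distinct real frequencies, and applying a Nazarov/Tur\'an-type one-variable inequality to get a doubling constant $e^{O(|\mathcal E|)}$---is exactly the paper's proof, which invokes Lemma~\ref{Lemma4.5} and its multivariate version Lemma~\ref{Lemma4.5'} (itself proved by one-dimensional sections, i.e.\ the very line-restriction you describe). The case split between $R\lesssim 1/\lambda$ and $R\gtrsim 1/\lambda$, and the chaining argument you float for the latter, are unnecessary detours: as you yourself note, chaining would only recover the weaker general bound~\eqref{4.2}, whereas Nazarov's inequality is scale-invariant, applies to intervals of any length, and so handles all disc radii uniformly in one step.
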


Based on \eqref {4.4}, some of the statements in \cite{D-F} may then be strengthened in the situation $M=\mathbb T^2$.
\medskip

Lemma \ref{Lemma4.3} is a consequence of a general principle,
an extension of Turan's lemma, for which  
we refer to Nazarov \cite{N}: 

\begin{lemma}\label{Lemma4.5}
Let $f(t)=\sum^J_{j=1} a_j e(\xi_j t), t\in\mathbb R$, where $\xi_1< \xi_2<\cdots<\xi_J\in\mathbb R$.
Let $I\subset\mathbb R$ be an interval and $\Omega\subset I$ a measurable subset. Then
\be\label{4.6}
\sup_{t\in\Omega} |f(t)|> \Big( c\frac{|\Omega|}{|I|}\Big)^{J-1} \sup_{t\in I} |f(t)|.
\ee
\end{lemma}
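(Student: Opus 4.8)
The plan is to reduce the statement to a Remez-type inequality on the fixed interval $[0,1]$ and to prove that by induction on the number $J$ of frequencies, the only genuinely delicate point being the passage from subintervals of $I$ to an arbitrary measurable subset.

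First I would normalize: the affine map carrying $I$ onto $[0,1]$ replaces each $\xi_j$ by $|I|\xi_j$ (still distinct and ordered) and $\Omega$ by a set of measure $|\Omega|/|I|$, so it suffices to show that for $f(t)=\sum_{j=1}^J a_j e(\xi_j t)$ and measurable $E\subset[0,1]$ with $|E|=\delta$ one has
\begin{equation}
\sup_{[0,1]}|f|\leq\Big(\frac{C}{\delta}\Big)^{J-1}\sup_E|f| \, .
\end{equation}
Since $|f|$ is continuous, $\sup_E|f|=\inf\{\sup_{\overline U}|f|: E\subset U \text{ open}\}$ and $\delta=\inf\{|U|: E\subset U \text{ open}\}$, so it is enough to prove this when $E$ is a finite union of closed intervals; going back from there to an arbitrary measurable $E$ (keep a compact subset carrying at least half the measure) costs only a factor $2^{J-1}$ in $C$, which is harmless.

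For $E$ a single interval this is the classical Turán lemma. I would prove it by sampling: set $M=\sup_{[0,1]}|f|=|f(t^\ast)|$ and observe that $|a_j|=|a_j e(\xi_j t)|$ does not depend on $t$, so $\max_j|a_j|\geq M/J$; then pick $J$ well-chosen points $s_1<\dots<s_J$ in $E$, express the vector $\big(f(s_k)\big)_k$ as the image of $\big(a_j e(\xi_j s_k)\big)_j$ under the generalized Vandermonde matrix $\big(e(\xi_j s_k)\big)_{j,k}$, and invert; optimizing the spacing bounds the norm of the inverse by $(C/\delta)^{J-1}$, which gives the claim. The induction on $J$ is needed to treat a finite union of intervals: put $g(t)=e(-\xi_1 t)f(t)$, so $|g|=|f|$, and note that a difference such as $g(t+h)-g(t)=\sum_{j=2}^J a_j\big(e((\xi_j-\xi_1)h)-1\big)e((\xi_j-\xi_1)t)$ involves only the $J-1$ frequencies $\xi_j-\xi_1$; applying the inductive hypothesis to such a difference on a slightly shorter interval, against the translated set $E\cap(E-h)$, and then integrating back up to $g$ would close the induction.

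The main obstacle is precisely this last passage. For a general measurable $E$ — or even a badly spread finite union of intervals — the overlap $E\cap(E-h)$ can be far smaller than $E$ for every fixed $h$, while the scales $h$ that restore a proportional overlap may be so small that $g(\cdot+h)-g(\cdot)$ carries no usable information. Reconciling these (equivalently: choosing the sampling points $s_k$ inside $E$ together with a quantitative lower bound for the associated Vandermonde determinant, losing no more than a factor $C^{J-1}$) is the heart of Nazarov's argument and the step I expect to demand the most work. The difficulty is genuine: the exponentials $e(\xi_j t)$ are not a Chebyshev system on $[0,1]$ — $e(\xi t)-1$ already has of order $\xi$ zeros there — so those generalized Vandermonde determinants really can degenerate, and controlling them appears to require an auxiliary induction, since such a determinant is itself an exponential sum in each node with the same set of frequencies.
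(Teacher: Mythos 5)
You are candid that the central step is missing, and that candor is warranted: what you defer is not a technicality but the entire content of the lemma. The differencing induction as you set it up would in fact break on natural examples. Take $\Omega\subset[0,1]$ to be $\sim\delta^{-1}$ equally spaced intervals of length $\sim\delta^{2}$: then $|\Omega\cap(\Omega-h)|\ll|\Omega|$ for every $h$ except those within $O(\delta^{2})$ of a multiple of the spacing, and for precisely those exceptional $h$ the factors $e\big((\xi_j-\xi_1)h\big)-1$ may be arbitrarily small (or vanish) for some $j$, so the inductive hypothesis applied to $g(\cdot+h)-g(\cdot)$ gives no usable lower bound on the coefficients you need to recover; moreover, passing from suprema of differences back to $\sup_\Omega|g|$ requires a telescoping/chaining inside $\Omega$ that you never specify and that again needs structure a general measurable set does not have. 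The base case is also not closed: sampling at $J$ points of $\Omega$ requires a quantitative lower bound on the generalized Vandermonde determinant $\det\big(e(\xi_j s_k)\big)$, which, as you yourself observe, can degenerate for real frequencies; the classical Tur\'an argument yields the inequality only when $\Omega$ is a subinterval of $I$. So the proposal has a genuine gap rather than merely "work left to do."

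For comparison: the paper does not prove this lemma either --- it is quoted from Nazarov \cite{N}, and reproducing it would mean reproducing his argument, which follows a different route from your differencing scheme. Roughly, one first uses a single-interval Tur\'an-type (power sum) estimate to show that the analytic extension of $f$ has doubling exponent $O(J)$ on disks of size comparable to $|I|$, and then applies a Remez-type / distribution-of-values inequality valid for any analytic function with controlled doubling (Jensen-type zero counting), which gives a bound of the shape $\big|\{t\in I:\ |f(t)|\le\tau\,\sup_I|f|\}\big|\le C|I|\,\tau^{1/(J-1)}$; applying this with $\tau=\sup_\Omega|f|/\sup_I|f|$ yields \eqref{4.6} for an arbitrary measurable $\Omega$. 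If you want a self-contained proof, that is the path to follow; your reduction to $[0,1]$ and to finite unions of intervals is fine as far as it goes, but it does not touch the real difficulty.
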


A simple 
argument based on one-dimensional sections allows one to deduce a
multivariate version of  Lemma~\ref{Lemma4.5} (see e.g. \cite{FM}):
\begin{lemma}
\label{Lemma4.5'}
Let $f(x) =\sum^J_{j=1} a_j e(\xi_j\cdot  x)$, $x\in\mathbb R^n$ and $\xi_1,
\cdots, \xi_J\in\mathbb R^n$ be distinct frequencies.
Let $I\subset\mathbb R^n$ be a cube and $\Omega\subset I$ a measurable subset.
Then \eqref{4.6} holds.
\end{lemma}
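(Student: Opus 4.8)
The goal is to reduce the multivariate statement (Lemma~\ref{Lemma4.5'}) to the one-dimensional case (Lemma~\ref{Lemma4.5}) by slicing along a generic line. The plan is as follows. First, I would normalize: after an affine change of variables we may take $I=[0,1]^n$, and by homogeneity we may assume $\sup_{x\in I}|f(x)|=1$, attained (or nearly attained) at some point $x^*\in I$. The quantity $|\Omega|/|I|=|\Omega|$ is then the relative measure to track. Since $\sum_j |a_j|$ may be large, we must be careful to work with the sup-norm throughout rather than coefficient sums.

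The key idea is a Fubini/pigeonhole argument on one-dimensional restrictions. Choose a direction $v\in\mathbb R^n$ (to be specified) and foliate $I$ by the lines $L_{y}=\{y+tv : t\in J_y\}$ for $y$ in a transverse hyperplane section $H$. On each line $L_y$, the restriction $f(y+tv)=\sum_j a_j e\big((\xi_j\cdot v)\,t\big)\,e(\xi_j\cdot y)$ is again an exponential sum in the single variable $t$, with at most $J$ distinct frequencies $\xi_j\cdot v$ — provided $v$ is chosen so that the map $\xi_j\mapsto \xi_j\cdot v$ is injective on $\{\xi_1,\dots,\xi_J\}$, which holds for all $v$ outside a finite union of hyperplanes, hence for generic $v$. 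Now Fubini gives $\int_H |\Omega\cap L_y|\,dy = |\Omega|$ up to the Jacobian of the slicing, and $\sup_{x\in I}|f|=1$ forces $\sup_{t\in J_{y_0}}|f(y_0+tv)|\gtrsim 1$ for the slice through $x^*$ (indeed the slice through $x^*$ literally contains a point where $|f|=1$). One then wants a single slice $L_{y}$ on which simultaneously (i) the sup of $|f|$ along $L_y$ is $\gtrsim 1$ and (ii) $|\Omega\cap L_y|/|J_y|\gtrsim |\Omega|$. To get (ii) generically one restricts the $y$-integration to the portion of $H$ where $|\Omega\cap L_y|$ is not too small — by Markov/pigeonhole the set of such $y$ has measure $\gtrsim |\Omega|$ inside $H$; one then argues that this good set of slices must come close to $x^*$, or alternatively one iterates: since $\sup_I|f|=1$, the function $y\mapsto \sup_t|f(y+tv)|$ has sup $1$ on $H$, and one applies the same reasoning recursively in the $(n-1)$-dimensional hyperplane $H$. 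Running the induction on $n$, each step costs a factor $(c|\Omega|)^{J-1}$ from Lemma~\ref{Lemma4.5}, but one must check the exponent does not compound — the point is that one only needs the one-dimensional lemma once, on the final good slice, after an $(n-1)$-dimensional selection argument that loses only constants.

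Concretely I would proceed by induction on $n$, the base case $n=1$ being Lemma~\ref{Lemma4.5}. For the inductive step, pick generic $v$ as above so the frequencies $\xi_j\cdot v$ are distinct. Define $g(y)=\sup_{t:\,y+tv\in I}|f(y+tv)|$ on the transverse slab $H$; this is again (pointwise) the sup of an exponential sum, and one verifies $g$ is measurable and $\sup_H g = \sup_I |f|$. Let $\Omega' = \{y\in H : |\{t: y+tv\in\Omega\}| \geq \tfrac12 |\Omega|\cdot(\text{length of }J_y)\}$; a standard averaging argument shows $|\Omega'|_{H}\gtrsim |\Omega|\cdot|H|$. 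On each $y\in\Omega'$, Lemma~\ref{Lemma4.5} in one variable gives $\sup_{t:\,y+tv\in\Omega} |f(y+tv)| \geq (c|\Omega|)^{J-1} g(y)$, i.e. $\sup_{\Omega\cap L_y}|f| \geq (c|\Omega|)^{J-1}\, g(y)$. Taking $\sup$ over $y\in\Omega'$ and using the inductive hypothesis applied to $g$ on $(H,\Omega')$ — noting $g$ need not be an exponential sum, so one must instead directly bound $\sup_{\Omega'} g \gtrsim |\Omega|^{?}\sup_H g$ — completes the estimate.

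The main obstacle, and the point requiring the most care, is precisely this last issue: $g(y)=\sup_t|f(y+tv)|$ is \emph{not} itself an exponential sum in $y$, so one cannot naively recurse with the lemma. The correct fix — and the heart of the argument in \cite{FM} — is to avoid defining $g$ and instead choose the slicing direction and the selection of the good slab so that a \emph{single} application of the one-dimensional lemma suffices: one selects the line through (a near-maximizer) $x^*$ in a direction $v$ for which the frequencies separate and for which the relative density of $\Omega$ on that line is $\gtrsim |\Omega|^{?}$; establishing that such a direction exists requires a geometric averaging over the sphere of directions (Fubini in polar-type coordinates), and controlling the measure loss there is the delicate step. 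I would expect to lose a power of $|\Omega|$ of size $O(1)$ (independent of $J$) in this geometric step, which is absorbed into the constant $c$, so the final exponent remains $J-1$ as claimed. Once the good direction/slice is secured, the conclusion is immediate from Lemma~\ref{Lemma4.5}.
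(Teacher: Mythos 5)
The paper gives no proof of Lemma~\ref{Lemma4.5'}; it simply cites Fontes-Merz \cite{FM} with the remark that the deduction is ``a simple argument based on one-dimensional sections.'' Your final paragraph does land on the standard such argument: fix a maximizer $x^*$ of $|f|$ on $I$, average over directions $\theta$ to locate a generic line $\ell$ through $x^*$ on which the frequencies $\xi_j\cdot\theta$ are distinct and $\Omega$ has one-dimensional density comparable to $|\Omega|/|I|$, then apply Lemma~\ref{Lemma4.5} once on $\ell$. You are also right to discard the naive recursion on $g(y)=\sup_t|f(y+tv)|$, since $g$ is not an exponential sum.

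However, the step you leave as a hope is precisely the one on which the lemma turns, and your stated expectation for it is wrong. You write that the averaging will ``lose a power of $|\Omega|$ of size $O(1)$ \dots which is absorbed into the constant $c$, so the final exponent remains $J-1$.'' A power of $|\Omega|$ cannot be absorbed into a multiplicative constant: if the density of $\Omega$ on the chosen line were only $\gtrsim|\Omega|^{1+\sigma}$ for some $\sigma>0$, Lemma~\ref{Lemma4.5} would yield exponent $(1+\sigma)(J-1)$, not $J-1$. What actually saves the argument is that no extra power is lost at all. Normalizing $I=[0,1]^n$ and writing $|\Omega|$ in polar coordinates about $x^*$,
\[
|\Omega|=\int_{S^{n-1}}\!\int_0^{R(\theta)}\mathbf{1}_\Omega(x^*+r\theta)\,r^{n-1}\,dr\,d\theta
\;\le\; n^{(n-1)/2}\int_{S^{n-1}}|\ell_\theta\cap\Omega|\,d\theta,
\]
since $r\le\diam(I)=\sqrt n$ for all points of $I$. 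Thus the average of the linear measure $|\ell_\theta\cap\Omega|$ over $\theta\in S^{n-1}$ is $\ge c_n|\Omega|$, and since $|\ell_\theta\cap I|\le\sqrt n$, the set of $\theta$ with $|\ell_\theta\cap\Omega|/|\ell_\theta\cap I|\ge c_n'\,|\Omega|/|I|$ has positive measure; one may pick such a $\theta$ off the measure-zero set (a finite union of great subspheres) where some $\xi_i\cdot\theta=\xi_j\cdot\theta$, and then Lemma~\ref{Lemma4.5} on $\ell_\theta\cap I$, using $\sup_{\ell_\theta\cap I}|f|\ge|f(x^*)|=\sup_I|f|$, gives \eqref{4.6} with only the dimensional constant $c_n'$ absorbed into $c$. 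The uniform boundedness of the polar Jacobian $r^{n-1}$ on the cube is the fact you need to make explicit; without it, the argument as you describe it would genuinely fail.
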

\medskip


Applying Lemma~\ref{Lemma4.5'} to $f=\vp$ with $J\leq \# \mathcal E$,
it follows that
\be \label{4.7}
\frac{\max_{B}|\vp|}{\max_{\frac 12 B}|\vp|}< C^{[\# \mathcal E]}
\ee
for all discs $B\subset\mathbb T^2$ and \eqref{4.4} follows.
\medskip

The following upper bound on the length of the nodal set lying in sets
of size $\approx \frac 1\lambda$ can be deduced\footnote{
Proposition 6.7 of \cite{D-F} gives an upper bound on the length of the
nodal set in terms of the doubling property for a complex ball, at the
scale of $1/\lambda$. 
To relate this to the doubling exponent $\beta$ of \eqref{4.1}, one
uses a hypo-elliptic estimate \cite[bottom of page 180]{D-F} to
relate the supremum over a complex ball to that over real balls.
Then one can invoke Lemma~\ref{Lemma4.3} to bound $\beta$.} 
 from \cite[Proposition 6.7]{D-F}: 
\begin{lemma}\label{Lemma4.8}
For any disc $B_{\frac 1\lambda}\subset\mathbb T^2$ of size $\frac
1\lambda$,
$$
\hd(\nodal_\vp\cap B_{\frac 1\lambda}) <C [\# \mathcal E] \frac1\lambda\ll \lambda^{\ve-1}
$$
\end{lemma}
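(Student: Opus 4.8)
The plan is to derive Lemma~\ref{Lemma4.8} directly from the combination of \cite[Proposition 6.7]{D-F}, which bounds the length of the nodal set in a ball of radius $\sim 1/\lambda$ by a constant depending on a doubling-type quantity for $\varphi$ measured at scale $1/\lambda$, together with the improved doubling bound of Lemma~\ref{Lemma4.3}. First I would recall the precise statement of Proposition 6.7 of \cite{D-F}: for a solution of $\Delta \varphi + \lambda^2 \varphi = 0$, the nodal length in a ball $B_{1/\lambda}$ is $O(1/\lambda)$ with an implied constant that is polynomial in (equivalently, controlled by an exponential of) a doubling exponent for the holomorphic extension of $\varphi$ to a complex ball of comparable scale.

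Next I would bridge the gap between the complex-ball doubling quantity appearing in \cite{D-F} and the real doubling exponent $\beta(\varphi)$ of \eqref{4.1}. This is the step the footnote alludes to: one uses the hypo-elliptic (elliptic regularity / Cauchy-type) estimate from \cite[p.~180]{D-F} which, for eigenfunctions, allows one to bound the supremum of $|\varphi|$ over a complex ball of radius $\sim 1/\lambda$ by a fixed multiple of the supremum over a concentric real ball of comparable radius. Combined with the interior Harnack/iteration scheme in \cite{D-F}, this converts the complex doubling hypothesis into a hypothesis phrased in terms of $\beta(\varphi)$, so that Proposition 6.7 yields $\hd(\nodal_\varphi \cap B_{1/\lambda}) \le C(\beta(\varphi)) \cdot \lambda^{-1}$, with $C(\beta)$ at most exponential in $\beta$ — or, if one is willing to be slightly lossy, simply polynomial in $e^{\beta}$.

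Finally I would invoke Lemma~\ref{Lemma4.3}, which gives $\beta(\varphi) < C|\mathcal E| < \exp\big(c \tfrac{\log\lambda}{\log\log\lambda}\big)$. Wait — one must be a little careful here: if the constant $C(\beta)$ in \cite{D-F} were exponential in $\beta$, plugging in $\beta \sim |\mathcal E|$ would be catastrophic. So I would check (as \cite{D-F} in fact provide) that the dependence of the nodal-length constant on the doubling exponent is at worst \emph{polynomial} in $\beta$, in which case $\hd(\nodal_\varphi \cap B_{1/\lambda}) \ll |\mathcal E|^{O(1)} \lambda^{-1}$, and since $|\mathcal E| = r_2(\lambda^2) \ll \lambda^\varepsilon$ for every $\varepsilon>0$ by \eqref{1.2}, this is $\ll \lambda^{\varepsilon-1}$ as claimed. (The statement in the lemma records the sharper form $C[\#\mathcal E]\lambda^{-1}$, reflecting a linear dependence on $|\mathcal E|$; either way the conclusion $\ll \lambda^{\varepsilon-1}$ is immediate.)

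The main obstacle is entirely bookkeeping from \cite{D-F}: pinning down the precise form of the dependence of the nodal-length estimate on the doubling data, and correctly executing the hypo-elliptic comparison between complex and real balls so that $\beta(\varphi)$ from \eqref{4.1} is genuinely the quantity that enters. There is no new analytic input needed beyond Lemma~\ref{Lemma4.3} and the cited results; the work is in quoting \cite{D-F} at the right level of precision.
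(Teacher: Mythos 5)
Your proposal follows exactly the route the paper takes: invoke \cite[Proposition 6.7]{D-F} for the local nodal-length bound in terms of a doubling quantity at scale $1/\lambda$ over a complex ball, bridge complex-ball to real-ball doubling via the hypo-elliptic estimate at the bottom of p.~180 of \cite{D-F}, and then control $\beta(\varphi)$ by Lemma~\ref{Lemma4.3} and the divisor bound \eqref{1.2}. Your side-check that the dependence on the doubling exponent must be polynomial (in fact linear, as recorded in the lemma's $C[\#\mathcal E]\lambda^{-1}$) rather than exponential is the right precaution and is indeed what \cite{D-F} provides.
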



We will also need the lower bound \cite{D-F}, \S7.

\begin{lemma}\label{Lemma4.9}
There are constants $a>0$, $c>0$ 
so that if we partition $\mathbb T^2$ into squares of size $\frac a\lambda$, 
\be
\label{4.10}
\mathbb T^2 =\bigcup_\nu Q_\nu
\ee
then
\be\label{4.11}
\hd(\nodal_\vp\cap Q_\nu)> c\lambda^{-1}
\ee
holds for at least half of the $Q_\nu$'s.
\end{lemma}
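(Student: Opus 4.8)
The plan is to prove the stronger assertion that, for a suitable absolute constant $a>0$, \emph{every} cube $Q_\nu$ of side $a/\lambda$ satisfies $\hd(\nodal_\vp\cap Q_\nu)\gtrsim\lambda^{-1}$; this is the strategy of \cite[\S7]{D-F} specialized to $\TT^2$, and it uses only Cheng's description of nodal lines \cite{Cheng}, domain monotonicity, the Faber--Krahn inequality and the planar isoperimetric inequality (the doubling bound of Lemma~\ref{Lemma4.3} is not needed for this particular step). Write $\mu\asymp\lambda^2$ for the Dirichlet eigenvalue in the PDE normalization. First I would observe that a square $S$ of side $s$ contained in a nodal domain $\Omega$ forces $s\lesssim\lambda^{-1}$: indeed $\vp|_\Omega$ is a one--signed element of $H^1_0(\Omega)$, hence the ground state, so $\lambda_1^{\mathrm{Dir}}(\Omega)=\mu$, and by domain monotonicity $\mu=\lambda_1^{\mathrm{Dir}}(\Omega)\le\lambda_1^{\mathrm{Dir}}(S)\asymp s^{-2}$. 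Choosing $a$ large enough (depending only on the normalization), it follows that the concentric sub-square $\tfrac13 Q_\nu$ cannot lie inside any nodal domain, so it meets $\nodal_\vp$; since (by \cite{Cheng}) $\vp$ changes sign across every nodal line, I then obtain points $p,q\in\tfrac12 Q_\nu$ with $\vp(p)>0>\vp(q)$.

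Next I would bound the nodal length in such a sign--changing cube. Let $U$ (resp.\ $U'$) be the connected component of $\{\vp>0\}\cap\mathrm{int}\,Q_\nu$ through $p$ (resp.\ of $\{\vp<0\}\cap\mathrm{int}\,Q_\nu$ through $q$). If $\overline U\subset\mathrm{int}\,Q_\nu$, then $\partial U\subset\nodal_\vp\cap Q_\nu$ and $\vp|_U$ is again a one--signed Dirichlet eigenfunction, so $\lambda_1^{\mathrm{Dir}}(U)=\mu\asymp\lambda^2$; Faber--Krahn gives $\area(U)\gtrsim\lambda^{-2}$, and the isoperimetric inequality gives $\hd(\nodal_\vp\cap Q_\nu)\ge\hd(\partial U)\gtrsim\area(U)^{1/2}\gtrsim\lambda^{-1}$. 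The same works if $\overline{U'}\subset\mathrm{int}\,Q_\nu$. In the remaining case both $\overline U$ and $\overline{U'}$ reach $\partial Q_\nu$, so each of $U,U'$ contains a continuum joining $\tfrac12 Q_\nu$ to $\partial Q_\nu$ and hence has diameter $\gtrsim a/\lambda$; since $\nodal_\vp$ separates $U$ from $U'$ inside $Q_\nu$, a plane--topology argument yields a connected component $\Gamma$ of $\nodal_\vp\cap\overline{Q_\nu}$ with $\diam\Gamma\gtrsim a/\lambda$, so $\hd(\nodal_\vp\cap Q_\nu)\ge\diam\Gamma\gtrsim\lambda^{-1}$. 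Thus every $Q_\nu$ is ``good'', which is more than the stated ``at least half''; as a byproduct, summing over the $\asymp\lambda^2$ cubes reproves the length lower bound $\hd(\nodal_\vp)\gtrsim\lambda$.

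I expect the only delicate point to be the plane--topology step in the last case: one must know that a closed set separating, inside a square, two connected sets that each join the centre to the boundary has a component of diameter comparable to the side. This is elementary (and can be sidestepped using that the nodal line entering $\tfrac12 Q_\nu$ is, by \cite{Cheng}, a $C^2$ immersed curve that therefore exits the surrounding cube $Q_\nu$ across its boundary, contributing length $\gtrsim a/\lambda$), so I do not anticipate a genuine obstruction; the substantive input is entirely contained in the Faber--Krahn and isoperimetric inequalities, exactly as in \cite[\S7]{D-F}.
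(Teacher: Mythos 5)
Your argument is correct in its essentials, but it is a genuinely different route from the one the paper takes. The paper simply imports Lemma~\ref{Lemma4.9} from \cite[\S7]{D-F}, whose proof is complex--analytic (holomorphic extension of $\vp$, control of zero sets of analytic functions via the doubling exponent); this is precisely why the paper remarks, after Lemma~\ref{Lemma4.9}, that both length lemmas ``use methods from analytic function theory and hence require $M$ to be real analytic,'' and why the conclusion there is only for \emph{half} of the cubes. Your proposal instead runs the Faber--Krahn/isoperimetric argument (essentially Br\"uning's 1978 lower--bound argument for surfaces): domain monotonicity forces a sign change in $\tfrac13 Q_\nu$, and then either a nodal domain is trapped inside $Q_\nu$ (Faber--Krahn plus the planar isoperimetric inequality give $\hd(\partial\Omega)\gtrsim\lambda^{-1}$), or a connected piece of the nodal set spans from $\tfrac12 Q_\nu$ to $\partial Q_\nu$. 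This buys you (i) a stronger conclusion --- \emph{every} $a/\lambda$--square, not just half, carries nodal length $\gtrsim\lambda^{-1}$ --- and (ii) an elementary proof that needs neither analyticity nor the doubling bound of Lemma~\ref{Lemma4.3}, because the flat metric makes the sharp Euclidean Faber--Krahn and isoperimetric inequalities available verbatim; what the Donnelly--Fefferman route buys instead is uniformity over all real--analytic metrics and all dimensions.

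Two small points to tighten. First, the ``sidestep'' at the end is not quite right as written: a nodal line entering $\tfrac12 Q_\nu$ need not exit $Q_\nu$ --- it can be a closed loop contained in $\mathrm{int}\,Q_\nu$. But in that case the loop encloses a full nodal domain $\Omega$ with $\overline\Omega\subset Q_\nu$ (any component of $\{\vp\neq 0\}$ inside the loop is blocked by the nodal curve from escaping), which sends you back to the Faber--Krahn branch; so the dichotomy should be phrased on the connected component $K$ of $\nodal_\vp\cap\overline{Q_\nu}$ through a nodal point $z_0\in\tfrac13 Q_\nu$: either $K$ meets $\partial Q_\nu$ (then $\hd(K)\ge\diam K\gtrsim a/\lambda$), or $K\subset\mathrm{int}\,Q_\nu$, in which case $K$ is a compact component of $\nodal_\vp$ and, by Cheng, contains an embedded circle bounding a disk $D\subset\mathrm{int}\,Q_\nu$, and any nodal domain inside $D$ triggers Faber--Krahn. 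Stated that way the plane--topology step reduces to the Jordan curve theorem and there is no need to invoke a separate separation lemma for $U$, $U'$. Second, describing this as ``the strategy of \cite[\S7]{D-F} specialized to $\TT^2$'' is a mis-attribution; D--F's \S7 is a different (analytic--continuation) mechanism, and your argument is closer in spirit to Br\"uning's.
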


Let us point out that both Lemmas~\ref{Lemma4.8} and \ref{Lemma4.9}
use methods from analytic function theory and hence require $M$ to be
real analytic.
\medskip

We derive one more consequence of Lemma \ref{Lemma4.5'} and
Lemma~\ref{Lemma2.8}.

\begin{lemma}\label{Lemma4.12}
Let $\psi=\sum_{\xi\in\mathcal E'} \^\psi (\xi) e(x \cdot \xi)$ (a
complex trigonometric polynomial) where $\mathcal E'\subset\mathcal
E=\mathcal E_\lambda$ is contained in an arc of size $\lambda^{\frac 12-\sigma} , \sigma>0$.
Let $\Omega\subset\mathbb T^2$ be a measurable set.
Then
\be
\label{4.13}
\sup_{x\in \Omega} |\psi (x)|>(c|\Omega|)^{\frac 1\sigma}\Vert\psi\Vert_\infty.
\ee
\end{lemma}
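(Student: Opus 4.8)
\textbf{Proof proposal for Lemma~\ref{Lemma4.12}.}

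The plan is to combine the multivariate Turán–Nazarov estimate of Lemma~\ref{Lemma4.5'} with the lattice-point bound of Lemma~\ref{Lemma2.8}, exploiting the fact that an arc of size $\lambda^{1/2-\sigma}$ can carry only a bounded number of lattice points. First I would apply Lemma~\ref{Lemma2.8} to the arc containing $\mathcal E'$: choosing $m$ so that $\delta(m)=\frac{1}{4\lfloor m/2\rfloor+2}$ is just below $\sigma$ (equivalently $\lfloor m/2\rfloor \approx \frac{1}{4\sigma}$, so $m \ll \frac1\sigma$), the arc of length $\lambda^{1/2-\sigma}\le \sqrt 2\,\lambda^{1/2-\delta(m)}$ contains at most $m$ points of $\mathcal E$, hence $\#\mathcal E' \le J := m \ll \frac1\sigma$. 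Thus $\psi$ is an exponential sum in at most $\ll 1/\sigma$ frequencies.

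Next I would set up the hypotheses of Lemma~\ref{Lemma4.5'}. Take $I = [0,1]^2$ to be the fundamental cube for $\TT^2$, regard $\psi$ as a function on $I$ with the $J$ distinct frequencies $\xi \in \mathcal E'$, and let $\Omega \subset I$ be the given measurable set. Lemma~\ref{Lemma4.5'} then gives
\be
\sup_{x\in\Omega}|\psi(x)| > \Bigl(c\,\frac{|\Omega|}{|I|}\Bigr)^{J-1}\sup_{x\in I}|\psi(x)| = (c|\Omega|)^{J-1}\,\|\psi\|_\infty,
\ee
since $|I|=1$ and $\sup_I|\psi| = \|\psi\|_\infty$. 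Because $J-1 \le J \ll 1/\sigma$ and $|\Omega|\le 1$, the quantity $(c|\Omega|)^{J-1}$ dominates $(c'|\Omega|)^{1/\sigma}$ for a suitable constant $c'$ (shrinking $c$ if necessary and using that raising a number in $(0,1)$ to a larger power only decreases it). This yields \eqref{4.13}.

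The only real point requiring care is making the exponent come out as the clean expression $\frac1\sigma$ rather than the somewhat awkward $J-1$ coming directly out of Lemma~\ref{Lemma2.8}; this is handled by absorbing the mismatch between $J-1$ and $1/\sigma$ into the constant $c$, which is legitimate since we only claim an inequality with an unspecified absolute constant and $|\Omega|^{\le 1}$. A secondary bookkeeping issue is that Lemma~\ref{Lemma2.8} is stated for $\mathcal E_\lambda$ with a specific numerical constant $\sqrt 2$ in front of $\lambda^{1/2-\delta(m)}$; one should check that for $\lambda$ large the bound $\lambda^{1/2-\sigma}$ does fit under $\sqrt2\,\lambda^{1/2-\delta(m)}$ once $\delta(m)<\sigma$, which holds by the choice of $m$. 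No essential obstacle arises beyond these constant-tracking steps.
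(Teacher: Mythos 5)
Your proof is correct and matches the paper's intended argument exactly: the paper introduces Lemma~\ref{Lemma4.12} with the words ``We derive one more consequence of Lemma~\ref{Lemma4.5'} and Lemma~\ref{Lemma2.8},'' and you have carried out precisely that combination, choosing $m\ll 1/\sigma$ via Lemma~\ref{Lemma2.8} so that $\#\mathcal E'\le m$, then feeding $J\le m$ into Lemma~\ref{Lemma4.5'} on the unit cube and absorbing the difference between the exponents $J-1$ and $1/\sigma$ into the constant using $c|\Omega|\le 1$.
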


Note that if Conjecture~\ref{Conjecture2} were true, one could
conclude that in the previous setting
\be\label{4.14}
\sup_{x\in \Omega} |\psi(x)|>(c|\Omega|)^{C(\sigma)} \Vert\psi\Vert_\infty
\ee
if $\mathcal E'$ is contained in an arc of size $\lambda^{1-\sigma}$, $\sigma>0$.
\medskip

\subsection{A Jensen type inequality}
In the spirit of \eqref{4.13}, \eqref{4.14}, one can show that
eigenfunctions of $\mathbb T^2$ can not be too small on large subsets
of $\mathbb T^2$, as a consequence of the following Jensen type
inequality.

\begin{lemma}\label{Lemma4.15}
If $\vp$ is an eigenfunction of $\mathbb T^2$, then
\be\label{4.16}
\int_{\mathbb T^2} \log|\vp(x) |dx \geq \max_{\xi\in\mathbb Z^2} (\log |\^\vp (\xi)|).
\ee
\end{lemma}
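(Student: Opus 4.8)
The plan is to view $\vp$ as a trigonometric polynomial and exploit the fact that a Fourier coefficient $\^\vp(\xi)$ is recovered by integrating $\vp(x)e(-x\cdot\xi)$ over $\TT^2$. The natural tool is Jensen's inequality applied to $\log$, but the elementary form $\int \log|\vp| \ge \log|\int \vp|$ would only handle $\xi=0$. To get all frequencies $\xi$, the first step is to reduce to the case $\xi=0$ by a change of variables: replacing $\vp(x)$ by $\vp(x)e(-x\cdot\xi)$ multiplies $\vp$ by a function of modulus $1$, so $\int_{\TT^2}\log|\vp(x)|\,dx$ is unchanged, while the $\xi$-th Fourier coefficient of $\vp$ becomes the $0$-th Fourier coefficient of the new function. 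Hence it suffices to prove $\int_{\TT^2}\log|\vp(x)|\,dx \ge \log|\^\vp(0)| = \log\bigl|\int_{\TT^2}\vp\bigr|$.

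The second step is to prove this scalar inequality. One clean route is to fix a generic line direction and restrict to a one-dimensional section $t\mapsto \vp(x_0+t v)$, which is a one-variable trigonometric polynomial, hence (after the substitution $z=e(t)$) a Laurent polynomial $L(z)$ on the unit circle $|z|=1$. For such $L$, the classical Jensen formula for the disc gives
\be
\int_0^1 \log|L(e(t))|\,dt \;=\; \log|\text{leading-type constant}| + \sum_{\text{zeros } |z_k|<1} \log\frac{1}{|z_k|} \;\ge\; \log\Bigl|\int_0^1 L(e(t))\,dt\Bigr|,
\ee
since the right-hand side is (up to sign) one of the coefficients of $L$, which is dominated by the Mahler measure $\exp\bigl(\int \log|L|\bigr)$. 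Integrating this over the transverse variable $x_0$ and using Fubini together with the concavity bound $\int\int \log|\vp| \ge \int \log\bigl|\int \vp(\cdot,x_0)\,d(\text{section})\bigr|$ (Jensen in the remaining variable) assembles the full two-dimensional statement. Alternatively — and more simply — one can bypass sections entirely: Jensen's inequality for the concave function $\log$ directly yields $\int_{\TT^2}\log|\vp| \le \log\int_{\TT^2}|\vp|$ in the wrong direction, so instead one applies the standard fact that for any nonzero $L^1$ function the geometric mean bounds a single Fourier coefficient, $\exp\bigl(\int\log|\vp|\bigr) \ge |\^\vp(\xi)|$, which for trigonometric polynomials is exactly the statement that Mahler measure dominates each coefficient; this is where the one-variable Jensen/Mahler computation above does the real work, lifted to several variables by iterating over coordinates.

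The main obstacle is making the passage from one dimension to two dimensions rigorous: the Mahler-measure inequality "geometric mean $\ge$ modulus of any coefficient" is classical in one variable via Jensen's formula, and in several variables it follows by writing $\TT^2 = \TT\times\TT$ and integrating the one-variable bound in each coordinate, but one must be slightly careful that $\vp$ may vanish on lower-dimensional sets so that $\log|\vp|$ is only an extended-real-valued integrable function (integrability holds because $\nodal_\vp$ has measure zero and $\log|\vp|$ is bounded above). Once the Mahler-measure domination is in hand, combining it with the modulus-one twist from the first step gives \eqref{4.16} immediately. A subtlety worth noting explicitly in the writeup is that the inequality is genuinely one-sided and sharp only in degenerate cases (e.g.\ $\vp$ a single exponential), consistent with the fact that eigenfunctions on $\TT^2$ are far from extremal for Mahler measure.
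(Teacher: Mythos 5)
The central claim your argument relies on --- that for any nonzero trigonometric polynomial one has $\exp\bigl(\int_{\TT^2}\log|\vp|\bigr)\geq|\^\vp(\xi)|$ for \emph{every} $\xi$, i.e.\ ``Mahler measure dominates each coefficient'' --- is false. Already in one variable, take $f(\theta)=(1+e(\theta))(1+e(-\theta))=2+e(\theta)+e(-\theta)$. By Jensen's formula $\int_0^1\log|1+e(\theta)|\,d\theta=0$, so $\int_0^1\log|f(\theta)|\,d\theta=0$ and the geometric mean is $1$, while $\^f(0)=2$. Mahler measure dominates the \emph{extreme} coefficients of a Laurent polynomial (those at vertices of the Newton polytope), not the interior ones, and $0$ is an interior point of $\{-1,0,1\}$. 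Your one-dimensional section argument has the same problem: a generic section of $\vp$ is a two-sided Laurent polynomial, and Jensen's formula then bounds only the top- and bottom-degree coefficients, not $c_0=\int L$.

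What saves the lemma is precisely the structure you are not using: $\supp\^\vp$ lies on the circle $\{|\xi|=\lambda\}$, so \emph{every} frequency in the support is a vertex of the Newton polytope. To exploit this you must take the one-dimensional sections in a supporting direction for $\xi_0$, and the cleanest choice is $v=\xi_0$ itself. Since $\xi\cdot\xi_0<|\xi_0|^2$ for all $\xi\neq\xi_0$ on the circle, the twisted section $\theta\mapsto\vp(x+\xi_0\theta)\,e(-|\xi_0|^2\theta)$ has only nonpositive frequencies in $\theta$: it is a genuine polynomial in $e(-\theta)$ whose value at the center of the disc is $\^\vp(\xi_0)e(x\cdot\xi_0)$. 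One-sided Jensen gives $\int_0^1\log|\vp(x+\xi_0\theta)|\,d\theta\geq\log|\^\vp(\xi_0)|$, and integrating in $x$ (the shift $x\mapsto x+\xi_0\theta$ preserves Lebesgue measure on $\TT^2$) yields \eqref{4.16}. This is the paper's proof. Your modulation by $e(-x\cdot\xi)$ and the use of sections plus Fubini are the right instincts, but without identifying $\xi_0$ as an extreme point and choosing the section direction to witness that, the Jensen step is applied in a setting where it is simply not valid.
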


This property generalizes to  eigenfunctions on higher
dimensional tori with the same argument.

\begin{proof}
Let $\Delta\vp=-\lambda^2\vp$ and $\vp=\sum_{|\xi|=\lambda}\^\vp
(\xi) e(x \cdot \xi)$.
Fix $\xi_0 \in\mathbb Z^2, |\xi_0|=\lambda$ and consider
$$
\vp(x+\xi_0\theta)= \^\vp(\xi_0) e(x \cdot \xi_0)
e(|\xi_0|^2\theta)+\sum_{\xi\not= \xi_0} \^\vp(\xi) e(x \cdot \xi)
e(\xi \cdot \xi_0\theta)
$$
as a polynomial in $\theta\in\mathbb T$.
Since $\xi\cdot \xi_0\in\mathbb Z$, $\xi \cdot \xi_0<|\xi_0|^2$ for
$\xi\not= \xi_0$, an application of Jensen's inequality to $f(\theta)
=\vp(x+\xi_0\theta)
e(-|\xi_0|^2\theta)$ with fixed $x$, gives
$$
\int\log|\vp (x+\xi_0\theta)|d\theta\geq\int\log
|f(\theta)|d\theta\geq \log |\^\vp(\xi_0)| \;.
$$
Integration in $x\in\mathbb T^2$ implies
$$
\int_{\mathbb T^2}\log|\vp(x)|dx \geq \log |\^\vp(\xi_0)|
$$
proving \eqref{4.16}.
\end{proof}

If we assume $\Vert\vp\Vert_2=1$, then certainly
$\Vert\^\vp\Vert_\infty\geq |\mathcal E|^{-\frac 12}$.
Hence, given any subset $\Omega$ of $\mathbb T^2$, Lemma \ref{Lemma4.15} implies
\be\label{4.17}
\int_\Omega\log|\vp| \geq 
\int_{\mathbb T^2} \log|\vp| -\int_{\mathbb  T^2} \log^+ |\vp| 
\gg - \log|\mathcal  E|-1 \gg -\frac{\log\lambda}{\log\log \lambda}\;. 
\ee

\section{Proof of Theorems \ref{Theorem2} and
  \ref{Theorem3}}\label{sec:5}

Given the eigenfunction $\vp$, $-\Delta\vp=\lambda^2\vp$,
let $\{C_\alpha\}$ be a collection of disjoint regular sub-arcs of
the nodal set $\nodal_\vp$, of width
\be\label{5.1}
w(C_\alpha) >\lambda^{-\rho}
\ee
where $\rho<1$ (we specify $\rho$ later on).
\medskip
Define
\be\label{5.8}
\mathcal N_0:= \bigcup_a C_\alpha
\ee
Our goal is to give an upper bound for the length of $\mathcal N_0$.

For each $C_\alpha$, perform the construction from Lemma \ref{Lemma3.5}, taking $c_0>0$ a small constant, to be specified.
This gives a collection $\{C_{\alpha, \tau}\}$ of sub-arcs of
$C_\alpha$ satisfying in particular
\be\label{5.2}
\sum_\tau|C_{\alpha, \tau}|> (1-2c_0) |C_\alpha|
\ee
\be\label{5.3}
\Big|\int_{C_{\alpha, \tau}} e\big((\xi_1-\xi_2)\cdot x\big)
\frac{ds}{|C_{\alpha, \tau}|}\Big|
\lesssim \frac{|\mathcal E|^4}{c_0^2 w(C_\alpha)} |\xi_1-\xi_2|^{-1} \ll \lambda^{\rho+\ve} |\xi_1-\xi_2|^{-1}
\ee
for all $\xi_1\not= \xi_2$.
Here $ds$ stands for the arc-length measure on $C_\alpha$; $\ve>0$
is arbitrarily small.  We get a subset $\mathcal N_1\subset \mathcal
N_0$  defined by
\be\label{5.9}
\mathcal N_1 :=\bigcup_\alpha\bigcup_\tau C_{\alpha, \tau}.
\ee
Using  \eqref{5.2} and the Donnelly-Fefferman  upper bound \eqref{1.4}
we see that
\be\label{5.10}
\hd(\mathcal N_0\backslash\mathcal N_1)< 2c_0 \sum_\alpha|C_\alpha| \leq 2c_0 \hd(\nodal_\vp)\lesssim c_0\lambda
\ee

Fix $\rho<\rho_1=\rho+3\delta<1$ and introduce a partition
\be\label{5.4}
\mathcal E=\bigcup_\beta\mathcal E_\beta
\ee
of the lattice points $\mathcal E =\mathbb Z^2 \cap\{|\xi|
=\lambda\}$, satisfying
\be\label{5.5}
\dist(\mathcal E_\beta, \mathcal E_{\beta'})>\lambda^{\rho_1} \text { for } \beta\not=\beta'
\ee
\be\label{5.6}
\text{diam\,}\mathcal E_\beta\ll \lambda^{\rho_1+\ve} \text { for each } \beta.
\ee
The construction is straightforward:
If we introduce a graph on $\mathcal E$, defining $\xi \sim\xi'$ if
$|\xi-\xi'|\leq\lambda^{\rho_1}$, its connected components
$\mathcal E_\beta$ are obviously of diameter at most
$\lambda^{\rho_1} \cdot \# \mathcal E \ll \lambda^{\rho_1+\ve}$ and
\eqref{5.5} holds.
\medskip

Let
$$
\vp=\sum\vp_\beta, \quad \Vert\vp\Vert_2 =1
$$
be the corresponding decomposition of our eigenfunction $\vp$.
Thus $\supp \^\vp_\beta\subset\mathcal E_\beta$.
For each $\alpha, \tau$ we have by \eqref{5.3}
\begin{multline}
\label{5.7}
0=\int_{C_{\alpha, \tau}}|\vp(x)|^2=
\sum_\beta\int_{C_{\alpha, \tau}} |\vp_\beta|^2 \\
+O \Big(
\sum_{\beta\not= \beta'}
\sum_{\substack{\xi,\in    \vE_{\beta} \\ \xi'\in \vE_{\beta'} }}
| \^\vp_\beta(\xi)|  | \^\vp_{\beta'}(\xi')|\frac
     {\lambda^{\rho+\ve}|C_{\alpha, \tau}|}{| \xi -\xi'|}
\Big)
\end{multline}
and \eqref{5.5} implies the bound
$\lambda^{\rho-\rho_1+\ve}|C_{\alpha, \tau}|$ on the last term of \eqref {5.7}.
Summing \eqref{5.7} over all $\alpha$, $\tau$ gives
\be\label{5.11}
\sum_\beta\int_{\mathcal N_1} |\vp_\beta|^2 \ll \lambda^{1+\rho-\rho_1+\ve}.
\ee

Since $\sum_\beta\Vert\vp_\beta\Vert^2_2 =1$, one can specify some
$\beta$ such that $||\vp_\beta|| \geq 1/\sqrt{|\vE|}$ and hence
$$\psi:=\frac{\vp_\beta}{\Vert\vp_\beta\Vert_2}$$
has $\Vert\psi\Vert_2=1$ and  satisfies
\be\label{5.12}
\int_{\mathcal N_1} |\psi|^2 \ll \lambda^{1+\rho-\rho_1+\ve}.
\ee
and
$$
\psi=\sum_{\xi\in\mathcal E'} \^\psi (\xi) e(x\cdot \xi)
$$
with
$\mathcal E'=\vE_\beta \subset\mathcal E$
contained in an arc of size $r<\lambda^{\rho_1+\epsilon}$.

Now define
\be\label{5.13}
\mathcal N' :=\{x\in\mathcal N, |\psi(x)|<\lambda^{-\delta}\}, \qquad
\mathcal N_1' := \mathcal N_1 \cap \mathcal N'
\ee
(recall $\delta=\frac {\rho_1-\rho}3$).
It follows from \eqref{5.12} that
\be\label{5.14}
\hd(\mathcal N_1\backslash \mathcal N_1')< \lambda^{1-\delta+\epsilon}.
\ee




Consider a partition of $\mathbb T^2$ in squares $Q_\nu$ of size
$\frac 1\lambda$ and let
$$
\Omega=\bigcup_{Q_\nu\cap\mathcal N'\not=\phi} Q_\nu \subset\mathbb T^2.
$$
We wish to bound the area $|\Omega|$.
\medskip

First, observe that in general for $x, y\in\mathbb T^2$
$$
\big|\psi(x)-\psi(y)\big|\leq \Vert\^\psi\Vert_1
          [\diam(\supp  \^\psi)]|x-y|
$$
and hence
\be\label{4.18}
\sup_{x,y\in Q_\nu } \left| |\psi(x)|-|\psi(y)| \right|
\lesssim |\mathcal E'|^{\frac 12} r\lambda^{-1}
<\lambda^{-\frac{1-\rho_1} 2}.
\ee
It follows that
\begin{equation}
  \sup_\Omega |\psi| < \lambda^{-\delta} +\lambda^{-\frac{1-\rho_1} 2}
\end{equation}

\subsection{Proof of Theorem~\ref{Theorem2}  ($\rho<\frac 12$).}
Let $\rho=\frac 12-5\delta$, $\rho_1 =\frac 12-2\delta$ for some $\delta>0$,
so that
\be\label{4.19}
r<\lambda^{\frac 12-\delta} 
\ee
and
$$
\sup_\Omega |\psi| < \lambda^{-\delta}
$$
 From \eqref{4.13} and the preceding 
$$
\lambda^{-\delta}  > (c|\Omega|)^{1/\delta}
$$
implying
\be\label{4.20}
|\Omega|< \lambda^{-\delta^2}.
\ee
Thus $\Omega$ contains at most $\lambda^{2-\delta^2}$ boxes $Q_\nu$,
and Lemma \ref{Lemma4.8} implies
\be\label{4.21}
\hd(\mathcal N')=\sum_{Q_\nu\cap \mathcal N' \neq  \emptyset}
 \hd (\mathcal N\cap Q_\nu)\ll \lambda^{1-\delta^2+\ve}< \lambda^{1-\frac {\delta^2}2}
\ee
for $\lambda$ large enough.
Thus
\be\label{5.15}
\hd(\mathcal N_1')<\lambda^{1-\frac{\delta^2}2}.
\ee
Using \eqref{5.14}, we therefore get
\be\label{5.16}
\hd(\mathcal N_1)< \lambda^{1-\frac\delta 2}
\ee
and since $\hd(\mathcal N_0\backslash\mathcal N_1)<\frac 12 \hd(\mathcal
N_0)$, if we take $c_0<\frac 14$ in \eqref{5.10}, we get
\be\label{5.17}
\hd(\mathcal N_0)< 2\lambda^{1-\frac\delta 2}\;.
\ee
This proves Theorem~\ref{Theorem2}. \qed
\medskip

As pointed out earlier, the validity of Conjecture \ref{Conjecture2}
would allow to replace the restriction $\rho<\frac 12$ by
$\rho<1$.

\subsection{Proof of Theorem~\ref{Theorem3}  ($\rho<1$)}

For general $\rho<1$, write $\rho=1-5\delta$, take
$\rho_1=\rho+3\delta=1-2\delta$, and  apply \eqref{4.17} to bound
$|\Omega|$,  getting
\be
-\delta (\log\lambda)|\Omega|> -\frac{\log\lambda}{\log\log\lambda}
\ee
that is
\be\label{4.22}
|\Omega|<\frac{1/\delta}{\log\log \lambda}
\ee
and hence in this case from \eqref{4.22} and Lemma \ref{Lemma4.9}, we
clearly get
\be\label{4.23}
\hd(\nodal_\vp\backslash \mathcal N') =
\sum_{Q_\nu\cap\Omega=\phi} \hd(\nodal_\vp\cap Q_\nu)> c_1 \lambda
\ee
where $c_1$ is some absolute constant.
Hence, from \eqref{5.10}, \eqref{5.14}, \eqref{4.23}
$$
\begin{aligned}
\hd(\nodal_\vp \backslash\mathcal N_0)&\geq \hd(\nodal_\vp\backslash \mathcal N_1')- \hd (\mathcal N_0\backslash \mathcal N_1) - \hd(\mathcal
 N_1 \backslash \mathcal N_1')\\[6pt]
&> c_1\lambda -c_0 \lambda-\lambda^{1-\delta+\epsilon} >\frac 12 c_1\lambda
\end{aligned}
$$
if we choose $c_0$ small enough. This proves Theorem~\ref{Theorem3}.
\qed

\appendix
\section{Higher order regularity - An example}\label{appendix:example}

The purpose of what follows is to show that `regular arcs' need not
satisfy higher order smoothness bounds, even for $\kappa$ small.
\medskip

Consider the eigenfunction
\be\label{6.1}
\vp(x, y) =\sin (ky+x)+\ve\sin(ky-x)+\delta\sin (y+kx)
\ee
with eigenvalue  $E=1+k^2$, where
$$
\ve=10^{-10} \text { and } \delta=10^{-100} k^{-2}.
$$
We restrict $x\in I=-\frac \pi 4+ [-10^{-3}, 10^{-3}]$ and consider
the curve $C\subset \{\vp=0\}$ parameterized by $y=y(x), x\in I$,
such that \be\label{6.2} |ky(x)+x|= 0(\ve). \ee

Evaluate $y', y'', y'''$
$$
\cos(ky+x)(ky'+1)+\ve\cos(ky-x) (ky'-1)+\delta\cos(y+kx)(y'+k)=0
$$
\be\label{6.3}
y'=\frac{\cos(ky+x)-\ve \cos(ky-x)+\delta k\cos (y+kx)}
{k[\cos(ky+x)+\ve\cos(ky-x)]+\delta\cos(y+kx)}=-\frac 1k +0\Big(\frac\ve k+\delta\Big)
\ee
and
$$
\begin{aligned}
&[k\cos(ky+x)+k\ve\cos(ky-x)+\delta\cos (y+kx)]y''=\\[6pt]
&\sin(ky+x)(ky'+1)^2 +\ve\sin (ky-x)(ky'-1)^2 +\delta\text{ sin}(y+kx)(y'+k)^2=
\end{aligned}
$$
(since $\vp =0$)
$$
\begin{aligned}
&\ve\sin(ky-x)[(ky'-1)^2 -(ky'+1)^2] +\delta\sin(y+kx)[(y'+k)^2-(ky'+1)^2]\\[6pt]
&= -4k\ve\sin (ky-x)y'-\delta(k^2 -1) \sin(y+kx)\big((y')^2 -1\big)
\end{aligned}
$$
and
\begin{equation*}
\begin{aligned}
y''&=\frac{-4k\ve\big(\sin 2x +0 (\ve)\big)\big(\frac 1k+0(\frac\ve
  k)\big)+0(\delta k^2)}
{k\big(1+0(\ve+\frac\delta k)\big)}\\[6pt]
&= -\frac {4\ve}k \sin 2x +0\Big(\frac{\ve^2}k+\delta k\Big)\\[6pt]
&\sim \frac{4\ve}k
\end{aligned}
\end{equation*}
from the choice of $\ve$, $\delta$ and $I$.

Thus $C$ is convex with curvature $\sim \frac 1k$.

Next, from the preceding
$$\begin{aligned}
k\big(1+0(\ve)\big)y''' + 0(k\ve|y''|)&= 0(k\ve|y'|+k\ve|y''|+k^2\delta|y'| \, |y''|)+\\[6pt]
&\delta(k^2-1)\big(1-(y')^2\big)(k+y') \cos (y+kx)
\end{aligned}
$$
and
\begin{equation*}
\label{6.5}
\big(1+0(\ve)\big) y'''=\delta(k^2-1) \cos (y+kx)+0\Big(\frac\ve k+\delta\Big) =\delta k^2 \cos kx 
+0\Big(\frac 1k\Big) 
\end{equation*}
where $\delta k^2= 10^{-100}$.
It follows that $\Vert y^{(iv)}\Vert_\infty\sim k$ for large $k$.

\end{document}